\renewcommand{\a}{\alpha}
\renewcommand{\b}{\beta}
\newcommand{\e}{\epsilon}
\renewcommand{\O}{\Omega}
\newcommand{\la}{\langle}
\newcommand{\ra}{\rangle}
\newcommand{\leqs}{\leqslant}
\newcommand{\geqs}{\geqslant}
\newcommand{\normeq}{\trianglelefteqslant}
\newcommand{\vs}{\vspace{3mm}}
\newcommand{\imod}[1]{\allowbreak\mkern4mu({\operator@font mod}\,\,#1)}
\theoremstyle{plain}
\newtheorem{theorem}{Theorem} 
\newtheorem{conj}[theorem]{Conjecture} 
\newtheorem{thm}{Theorem}[section] 
\newtheorem{prop}[thm]{Proposition}
\newtheorem*{theorem*}{Theorem} 
\newtheorem*{conj*}{Conjecture}
\theoremstyle{definition}
\newtheorem{rem}[thm]{Remark}
\newtheorem{defn}[thm]{Definition}
\newtheorem*{deff}{Definition}
\newtheorem{remk}[theorem]{Remark}
\begin{document}

\title[On the depth of subgroups of simple groups]{On the depth of subgroups of 
simple groups}

\author{Timothy C. Burness}
\address{T.C. Burness, School of Mathematics, University of Bristol, Bristol BS8 1UG, UK}
\email{t.burness@bristol.ac.uk}

\date{\today} 

\begin{abstract}
The depth of a subgroup $H$ of a finite group $G$ is a positive integer defined with respect to  the inclusion of the corresponding complex group algebras $\mathbb{C}H \subseteq \mathbb{C}G$. This notion was originally introduced by Boltje, Danz and K\"{u}lshammer in 2011, and it has been the subject of numerous papers in recent years. In this paper, we study the depth of core-free subgroups, which allows us to apply powerful computational and probabilistic techniques that were originally designed for studying bases for permutation groups. We use these methods to prove a wide range of new results on the depth of subgroups of almost simple groups, significantly extending the scope of earlier work in this direction. For example, we establish best possible bounds on the depth of irreducible subgroups of classical groups and primitive subgroups of symmetric groups. And with the exception of a handful of open cases involving the Baby Monster, we calculate the exact depth of every subgroup of every almost simple sporadic group. We also present a number of open problems and conjectures.
\end{abstract}

\maketitle

\section{Introduction}\label{s:intro}

Let $G$ be a finite group and let $H$ be a proper subgroup. We begin by recalling the definition of the \emph{depth} of $H$ in $G$, which was originally introduced by Boltje, Danz and K\"{u}lshammer in \cite{BDK}. This positive integer is defined in terms of the inclusion of complex group algebras $\mathbb{C}H \subseteq \mathbb{C}G$, extending earlier notions of depth that had been defined and studied for other families of algebras, such as von Neumann algebras, Hopf algebras and Frobenius algebras (see \cite{K,KK,KN,KS}, for example).

Given a positive integer $n$, we say that the inclusion $\mathbb{C}H \subseteq \mathbb{C}G$ has depth $2n$ if $(\mathbb{C}G)^{n+1}$ is isomorphic, as a $(\mathbb{C}G,\mathbb{C}H)$-bimodule, to a direct summand of $\bigoplus_{i=1}^k (\mathbb{C}G)^n$ for some $k \geqs 1$, where
\[
(\mathbb{C}G)^m := \mathbb{C}G \otimes_{\mathbb{C}H} \cdots \otimes_{\mathbb{C}H} \mathbb{C}G \;\; \mbox{(with $m$ factors).}
\]
And the inclusion $\mathbb{C}H \subseteq \mathbb{C}G$ has depth $2n+1$ if the same conclusion holds with respect to $(\mathbb{C}H,\mathbb{C}H)$-bimodules. Finally, $\mathbb{C}H$ has depth $1$ in $\mathbb{C}G$ if there is a positive integer $k$ such that $\mathbb{C}G$ is isomorphic to a direct summand of $\bigoplus_{i=1}^k \mathbb{C}H$ as a $(\mathbb{C}H,\mathbb{C}H)$-bimodule. 

By \cite[Theorem 4.1]{BDK}, every group algebra inclusion $\mathbb{C}H \subseteq \mathbb{C}G$ has depth $n$ for some positive integer $n$ (and in view of \cite[Corollary 6.11]{BKK}, one can  take $n \leqs 2|G:N_G(H)|$). And from the definition, we see that if $\mathbb{C}H \subseteq \mathbb{C}G$ has depth $n$, then it also has depth $n+1$, so it is natural to consider the minimal depth of the inclusion. This leads us to the following definition.

\begin{deff}
The \emph{depth} of $H$ in $G$, denoted $d_G(H)$, is defined to be the minimal depth of the inclusion of complex group algebras $\mathbb{C} H \subseteq \mathbb{C}G$.
\end{deff}

In earlier work, it has been proved that $d_G(H) \leqs 2$ if and only if $H$ is a normal subgroup of $G$ (see \cite[Corollary 3.2]{KK}). Moreover, a theorem of Boltje and K\"{u}lshammer \cite[Theorem 1.7]{BK} reveals that $d_G(H) = 1$ if and only if $G= HC_G(x)$ for all $x \in H$ (in particular, the depth of the trivial subgroup, and also $G$ itself, is $1$). Given any positive integer $n$, it is known that there exists a finite group $G$ with a subgroup $H$ such that $d_G(H) = n$. For example, $d_{S_{m+1}}(S_{m}) = 2m-1$ by \cite[Theorem 6.19]{BKK}, while the problem for even depth was resolved more recently in \cite{JBH}. It is also worth noting that the depth of $H$ in $G$ does not change if $\mathbb{C}$ is replaced by any field of characteristic zero (see \cite[Remark 4.5(a)]{BDK}). 

There are several different methods for calculating or bounding $d_G(H)$, and we briefly recall some of the main approaches in Section \ref{ss:depth}. In particular, an interesting connection with bases for permutation groups arises in the special case where $H$ is a \emph{core-free} subgroup of $G$ (recall that $H$ is core-free if it does not contain a nontrivial normal subgroup of $G$).

In order to describe this connection, let us assume $H$ is nontrivial and core-free,  in which case $d_G(H) \geqs 3$ by the aforementioned result of Kadison and K\"{u}lshammer \cite{KK}. We may view $G \leqs {\rm Sym}(\O)$ as a transitive permutation group with point stabilizer $H$ and we recall that a \emph{base} for $G$ is a subset of $\O$ with trivial pointwise stabilizer. The \emph{base size} of $G$, denoted $b(G,H)$, is the minimal size of a base. Equivalently, $b(G,H)$ is the minimal integer $n$ such that there exist $n$ conjugates of $H$ with trivial intersection. This fundamental invariant has been extensively studied in the permutation group theory literature for many decades, with bases finding a wide range of applications and connections to other areas (we refer the reader to the survey articles \cite{BC} and \cite[Section 5]{B180} for further details and references).

The main result connecting the base size of $G$ with the depth of a core-free subgroup $H$ is   \cite[Theorem 6.9]{BKK}, which implies that 
\begin{equation}\label{e:base}
d_G(H) \leqs 2b(G,H)-1.
\end{equation}
In particular, if $b(G,H) = 2$ then $d_G(H) = 3$. It is also worth noting that there are examples with $d_G(H)<b(G,H)$ (see Remark \ref{r:q51} for instance, which answers  a question from \cite{JHH}). And if $K$ is a subgroup of a core-free subgroup $H$ of $G$, then we have $d_G(K) \leqs d_G(H)$ (see Proposition \ref{p:graph}(vi)).

In this paper, we focus on the depth of subgroups of almost simple groups, where we recall that a finite group $G$ is \emph{almost simple} if the socle of $G$, denoted $G_0$, is a nonabelian simple group. In particular, by identifying $G_0$ with its group of inner automorphisms, we have 
\[
G_0 \normeq G \leqs {\rm Aut}(G_0).
\]
For example, the groups $S_n$ and ${\rm PGL}_2(q)$ are almost simple for $n \geqs 5$ and $q \geqs 4$, respectively.

Determining the exact depth of a given subgroup is a difficult problem, in general, but several special cases in the almost simple setting have been studied in earlier work. For example,  \cite[Theorem 6.19]{BKK} shows that the depth of $H = S_{n-1}$ in $G = S_n$ is $2n-3$ (notice that this coincides with $2b(G,H) -1$, demonstrating the sharpness of the upper bound in \eqref{e:base}). In \cite{FKR}, this was extended to the maximal intransitive subgroups of $G = S_n$, which are of the form $H = S_k \times S_{n-k}$ with $k \leqs n/2$. For example, if $n \geqs (k^2+2)/2$ then the main theorem of \cite{FKR} gives 
\[
d_G(H) = 2\left\lceil \frac{2(n-1)}{k+1}\right\rceil -1,
\]
with upper and lower bounds on $d_G(H)$ presented in the remaining cases. Depth for subgroups of certain rank one simple groups of Lie type has also been studied, namely ${\rm PSL}_2(q)$ in \cite{F1,F2}, and the Suzuki and small Ree groups in \cite{HHP1,HHP2,JHH}. We refer the reader to Table \ref{tab:as} in Section \ref{ss:as} for a convenient summary of existing results for almost simple groups.

Bases for finite primitive groups have been intensively studied over the last 25 years and there continues to be a lot of interest and activity in this area. In particular, there is an extensive literature on bases for primitive actions of almost simple groups, which we can use (via \eqref{e:base}) to shed new light on the depth of subgroups of such groups. Indeed, our main goal in this paper is to systematically translate and apply results and techniques that have proved highly effective in the study of bases in order to significantly extend and enhance our understanding of the depth of subgroups of almost simple groups. And along the way, we will highlight several open problems, which will be a focus of future work. 

In order to state our first result, we need to recall some terminology from permutation group theory. Let $G \leqs {\rm Sym}(\O)$ be an almost simple primitive group with socle $G_0$ and point stabilizer $H$. Roughly speaking, we say that $G$ is \emph{standard} if $G_0 = A_n$ is an alternating group and $H \cap G_0$ is intransitive or imprimitive on $\{1, \ldots, n\}$, or if $G_0$ is a classical group and $H \cap G_0$ acts reducibly on the natural module for $G_0$ (see \cite[Definition 1.1]{B07} for the formal definition). Otherwise, $G$ is said to be \emph{non-standard}. Typically, the base size of a standard group can be arbitrarily large. For example, if $G = {\rm PGL}_n(q)$ with $q \geqs 3$ and $\O$ is the set of $1$-dimensional subspaces of the natural module, then $b(G,H) = n+1$. However, this behaviour is in stark contrast to a remarkable conjecture of Cameron from the late 1990s (see \cite[p.122]{Cam}), which asserts that all non-standard groups admit small bases. More precisely, Cameron conjectured that $b(G,H) \leqs 7$ for every non-standard group $G$, with equality if and only if $G$ is the Mathieu group ${\rm M}_{24}$ in its natural action on $24$ points (in which case $H = {\rm M}_{23}$). A version of Cameron's conjecture with an undetermined absolute constant was proved by Liebeck and Shalev \cite{LSh99}, and the precise form of the conjecture was established in later work by Burness et al. \cite{B07,BGS,BLS,BOW}. In both \cite{LSh99} and the subsequent work, a key ingredient is a powerful probabilistic approach for bounding $b(G,H)$, which is based on fixed point ratio estimates (we will discuss this in Section \ref{sss:bases}).

These results on bases have immediate applications to the depth problem, as demonstrated by our first main result.

\begin{theorem}\label{t:ns}
Let $G \leqs {\rm Sym}(\O)$ be a non-standard almost simple primitive permutation group with socle $G_0$ and point stabilizer $H$.
\begin{itemize}\addtolength{\itemsep}{0.2\baselineskip}
\item[{\rm (i)}] If $G_0 = A_n$ is an alternating group, then $d_G(H) \leqs 9$, with equality if and only if $G = S_6$ and $H = S_5$ is primitive. 
\item[{\rm (ii)}] If $G_0$ is a sporadic group, then $d_G(H) \leqs 11$, with equality if and only if $G = {\rm M}_{24}$ and $H = {\rm M}_{23}$.
\item[{\rm (iii)}] If $G_0$ is classical, then $d_G(H) \leqs 7$, with equality if $G = {\rm U}_6(2).2$ and $H = {\rm U}_4(3).2^2$.
\item[{\rm (iv)}] If $G_0$ is an exceptional group of Lie type, then $d_G(H) \leqs 11$.
\end{itemize}
\end{theorem}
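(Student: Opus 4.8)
The plan is to derive all four parts from the inequality \eqref{e:base}, $d_G(H) \leqs 2b(G,H)-1$, by feeding in the available classification of base sizes for non-standard primitive groups; the real work is then confined to the short list of cases with relatively large base size, where \eqref{e:base} is no longer sharp and the depth must be determined by other means. As a first step, the resolution of Cameron's conjecture in \cite{B07,BGS,BLS,BOW} gives $b(G,H) \leqs 6$ whenever $(G,H) \ne ({\rm M}_{24},{\rm M}_{23})$, and $b(G,H) = 7$ in that one exceptional case. Hence \eqref{e:base} already yields $d_G(H) \leqs 11$ for every non-standard group other than ${\rm M}_{24}$ acting on $24$ points, which in particular proves part (iv) outright. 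It remains to improve this in parts (i)--(iii), and since $b(G,H) = 2$ (hence $d_G(H) = 3$) in all but finitely many families, we only have to inspect the explicit list of $(G,H)$ with $b(G,H) \geqs 4$ provided by the references above.

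For part (i), where $G_0 = A_n$ and $H \cap G_0$ is primitive on $\{1,\dots,n\}$, I would apply the base-size classification for almost simple groups with alternating socle in \cite{BGS}. This gives $b(G,H) \leqs 4$, and so $d_G(H) \leqs 7$ by \eqref{e:base}, in every case except $(G,H) = (S_6, S_5)$ with $S_5 \cong {\rm PGL}_2(5)$ a transitive (hence primitive) subgroup, where $b(G,H) = 5$ and a priori only $d_G(H) \leqs 9$. In this last case I would avoid a direct computation by exploiting the exceptional outer automorphism of $S_6$, which interchanges the two conjugacy classes of subgroups isomorphic to $S_5$. Since the depth is invariant under automorphisms of $G$, the depth of the transitive $S_5$ equals that of an intransitive $S_5$, i.e.\ of an ordinary point stabiliser, which is $2\cdot 6 - 3 = 9$ by \cite[Theorem 6.19]{BKK}. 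This proves (i), including the equality characterisation.

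For parts (ii) and (iii) the remaining cases are handled by computing $d_G(H)$ directly, using one of the methods recalled in Section \ref{ss:depth} — concretely, via the inclusion matrix of $\mathbb{C}H \subseteq \mathbb{C}G$ (equivalently, via distances in the associated graph; see Proposition \ref{p:graph}), which requires only the ordinary character tables of $G$ and $H$ together with the class fusion of $H$ in $G$. In part (ii), where $G_0$ is sporadic, the base-size classification in \cite{BOW} (and its successors) reduces the problem to the short list of $(G,H)$ with $b(G,H) \in \{6,7\}$; all of the groups involved are computationally accessible — the Monster and Baby Monster do not arise here, since all of their primitive actions have base size at most $5$ — and the computation returns $d_{{\rm M}_{24}}({\rm M}_{23}) = 11$ together with $d_G(H) \leqs 10$ in every other case, giving the bound $d_G(H) \leqs 11$ and the stated equality case. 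In part (iii), where $G_0$ is classical and $H \cap G_0$ acts irreducibly on the natural module, the classification of base sizes for non-standard classical groups (\cite{B07,BLS} and the literature cited there) gives $b(G,H) \leqs 4$, hence $d_G(H) \leqs 7$, outside a small collection of low-dimensional examples with $b(G,H) = 5$; for those I would check by direct computation that $d_G(H) \leqs 7$ nonetheless, and the same computation applied to $(G,H) = ({\rm U}_6(2).2, {\rm U}_4(3).2^2)$ (for which $b(G,H) = 4$) returns $d_G(H) = 7$, showing the bound is attained.

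The main obstacle is precisely that \eqref{e:base} is not sharp in the extremal cases, so knowledge of $b(G,H)$ alone does not pin down $d_G(H)$ — the inclusion $({\rm M}_{24},{\rm M}_{23})$, with $b = 7$ but $d = 11$, already makes this clear. Consequently the ``best possible'' claims in (i) and (ii), and the exact value in the ${\rm U}_6(2).2$ example, each require an independent input: for the $S_6$ case this is the automorphism argument above, but for ${\rm M}_{24}$, ${\rm U}_6(2).2$, and the handful of other cases with $b(G,H) \geqs 5$ one must carry out the character-theoretic computation, which is viable only because the relevant character tables and fusion maps are explicitly known. A final point to verify is that the computationally intractable groups — the Monster and Baby Monster — contribute nothing beyond the stated bounds; this is guaranteed by the fact, noted above, that their primitive actions all have small base size, so \eqref{e:base} already suffices for them.
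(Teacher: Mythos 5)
Your proposal is correct and follows essentially the same strategy as the paper: combine the base-size classification underlying Cameron's conjecture with the bound $d_G(H) \leqs 2b(G,H)-1$, and then settle the handful of extremal cases ($b(G,H)\geqs 5$, or $b\in\{6,7\}$ in the sporadic case) by a direct character-theoretic computation of the inclusion matrix. Two points of comparison. First, your treatment of the equality case in (i) is genuinely different and rather cleaner: the paper obtains $d_{S_6}(S_5)=9$ for the primitive $S_5$ from the computations behind Theorem \ref{t:sn}, whereas you transport the known value $d_{S_6}(S_{5})=2\cdot 6-3=9$ for the natural point stabilizer across the exceptional outer automorphism of $S_6$; since an automorphism of $G$ induces an isomorphism of the pair $(\mathbb{C}G,\mathbb{C}H)\to(\mathbb{C}G,\mathbb{C}\phi(H))$, depth is indeed automorphism-invariant and this argument is valid, avoiding any computation for that case. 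Second, a small factual correction in (iii): by the main theorem of \cite{B07}, $({\rm U}_6(2).2,{\rm U}_4(3).2^2)$ is precisely the unique non-standard classical case with $b(G,H)=5$ (not $b=4$ as you assert), so the computation there is needed for the upper bound $d_G(H)\leqs 7$ as well as for the exact value; this does not damage your argument, since your plan in any case computes the depth directly in every $b=5$ case, but the bookkeeping should be stated correctly.
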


\begin{remk}\label{r:ns}
Let us record some comments on the statement of Theorem \ref{t:ns}.
\begin{itemize}\addtolength{\itemsep}{0.2\baselineskip}
\item[{\rm (a)}] For $G_0 = A_n$, the non-standard hypothesis simply means that $H \cap G_0$ acts primitively on $\{1,\ldots, n\}$, so part (i) reveals that all primitive subgroups of symmetric and alternating groups have small depth. We refer the reader to Theorem \ref{t:sn} below for a complete classification of the primitive subgroups with depth greater than $3$.  

\item[{\rm (b)}] The upper bound in part (ii) is a special case of Theorems \ref{t:main0} and \ref{t:main2} below, where we give the exact depth of every maximal subgroup of every almost simple sporadic group. In fact, with the exception of a handful of open cases involving the Baby Monster, we are able to calculate the exact depth of every subgroup of every almost simple sporadic group (see Theorems \ref{t:main0}, \ref{t:main} and \ref{t:main2}).

\item[{\rm (c)}] Part (iii) shows that irreducible subgroups of simple classical groups have small depth. Indeed, by putting aside a small number of cases that are excluded by our definition of a non-standard group, such as ${\rm O}_{n}^{\pm}(q) < {\rm Sp}_n(q)$ with $q$ even, we deduce that every irreducible subgroup has depth at most $7$. Note that this upper bound is best possible even when $G$ is simple. For example, if $G = {\rm U}_4(2)$ and $H = 3^3.S_4$ is an irreducible subgroup of type ${\rm GU}_1(2) \wr S_4$, then one can check that $d_G(H) = 7$. It remains an open problem to determine all the non-standard classical groups with $d_G(H) = 7$.

\item[{\rm (d)}] The bound in (iv) follows immediately from the main theorem of \cite{BLS}, which gives $b(G,H) \leqs 6$ for every core-free subgroup $H$ of an almost simple exceptional group $G$. By \cite[Theorem 1]{B18}, we have $b(G,H) = 6$ if and only if $(G_0,H) = (E_7(q),P_7)$, $(E_6(q),P_1)$ or $(E_6(q),P_6)$, where $P_m$ denotes a maximal parabolic subgroup of $G$, labelled in the usual way. So away from these cases, we get $d_G(H) \leqs 9$ and we know that there are examples with $d_G(H) = 7$. For instance, if $(G,H) = (E_6(2),P_1)$ then by working with the  permutation representation of $G$ on $G/H$ of degree $139503$ we can construct the character tables of $G$ and $H$, which allows us to compute $d_G(H) = 7$ using the character-theoretic method described in Section \ref{sss:inc}. But it remains an open problem to determine whether or not there exists a subgroup $H$ of an almost simple exceptional group $G$ with $d_G(H) \geqs 8$. 
\end{itemize}
\end{remk}

Next we present more refined versions of Theorem \ref{t:ns} for almost simple groups with socle an alternating group or sporadic group. Note that Table \ref{tab:sn} is presented in Section \ref{s:tab} at the end of the paper, where the possibilities for $H$ are recorded up to conjugacy in $G$.

\begin{theorem}\label{t:sn}
Let $G$ be an almost simple group with socle $G_0 = A_n$ and let $H$ be a core-free subgroup of $G$ such that $H \cap G_0$ acts primitively on $\{1, \ldots, n\}$. Set $d=d_G(H)$. Then either 
\begin{itemize}\addtolength{\itemsep}{0.2\baselineskip}
\item[{\rm (i)}] $d = 3$; or
\item[{\rm (ii)}] $4 \leqs d \leqs 9$ and $(G,H,d)$ is one of the cases listed in Table \ref{tab:sn}.
\end{itemize}
In particular, $d \leqs 9$, with equality if and only if $G = S_6$ and $H=S_5$.  
\end{theorem}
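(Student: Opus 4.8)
The plan is to reduce the classification to a finite list of small groups via the base-size bound \eqref{e:base}, and then to settle those cases by direct computation. The starting point is a reduction to maximal subgroups. Suppose $H$ is as in the statement and $H \leqs M < G$ with $M$ maximal. Then $M$ does not contain $G_0$, so $M$ is core-free, and $M \cap G_0 \geqs H \cap G_0$ is a transitive overgroup of a primitive group, hence primitive on $\{1,\ldots,n\}$; in other words $(G,M)$ is a non-standard almost simple primitive group in the sense of Remark \ref{r:ns}(a). Proposition \ref{p:graph}(vi) gives $d_G(H) \leqs d_G(M)$, and Theorem \ref{t:ns}(i) gives $d_G(M) \leqs 9$, with equality only for $G = S_6$ and $M = S_5$ acting primitively on $6$ points (so $M \cong \mathrm{PGL}_2(5)$). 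Since a primitive permutation group is transitive, the hypothesis forces $H \cap G_0 \neq 1$, so $H$ is nontrivial and, being core-free, non-normal; hence $d_G(H) \geqs 3$ by the theorem of Kadison and K\"{u}lshammer. This already yields $3 \leqs d \leqs 9$, and shows that for the classification it suffices to describe the pairs with $d \geqs 4$; by the reduction, every maximal overgroup $M$ of such an $H$ has $d_G(M) \geqs 4$.

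Next I would cut the problem down to finitely many groups. If $b(G,M) = 2$, then \eqref{e:base} together with the remark following it gives $d_G(M) = 3$, and hence $d_G(H) = 3$ by the reduction; so we may assume $b(G,M) \geqs 3$. The non-standard primitive groups with socle an alternating group and base size at least $3$ form an explicit finite list, all of bounded (indeed small) degree, as a consequence of the work on Cameron's conjecture cited in the introduction (see \cite{B07,BGS,BLS,BOW}). For each of the finitely many resulting pairs $(G,M)$, I would compute $d_G(M)$ exactly using the character-theoretic method of Section \ref{sss:inc}: one builds the character tables of $G$ and $M$ and determines the least $n$ for which the relevant bimodule condition first holds. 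The pairs with $d_G(M) \geqs 4$ are precisely the maximal cases appearing in Table \ref{tab:sn}, and the unique pair with $d_G(M) = 9$ is $(S_6,\mathrm{PGL}_2(5))$, where $b(G,M) = 5$ and $d_G(M) = 2 \cdot 5 - 1 = 9$.

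It then remains to handle non-maximal subgroups. By the reduction, any $H$ as in the statement with $d \geqs 4$ lies inside one of the finitely many maximal subgroups $M$ identified above; since these $M$ are small, I would enumerate — up to $G$-conjugacy — the subgroups $H \leqs M$ for which $H \cap G_0$ is still primitive on $\{1,\ldots,n\}$, a severe restriction leaving only a handful of possibilities, and compute $d_G(H)$ for each by the same character-theoretic method, using $d_G(H) \leqs d_G(M)$ to bound the search. In particular, within the case $(S_6,\mathrm{PGL}_2(5))$ one checks that the only subgroup $H$ with $H \cap A_6$ primitive on $6$ points and $d_{S_6}(H) = 9$ is $H = \mathrm{PGL}_2(5)$ itself, which establishes the final ``in particular'' clause. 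Assembling the maximal cases together with their relevant subgroups produces Table \ref{tab:sn}.

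The main obstacle is the computational core of the argument: assembling the base-size classification for non-standard primitive groups of alternating socle correctly (being careful about the interplay between $A_n$, $S_n$ and any novelty maximal subgroups), and then carrying out the exact depth computations — not only for the finitely many maximal pairs, but also for the relevant subgroups of each, where one must traverse the subgroup lattice and verify in each case that $H \cap G_0$ is primitive before applying the character-theoretic test.
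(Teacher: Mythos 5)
Your overall strategy --- reduce to maximal overgroups, use the base-size results behind Cameron's conjecture to cut the problem down to a finite list, then compute depths via inclusion matrices and descend through the subgroup lattice --- is essentially the paper's. But there is a genuine gap in your opening reduction. You assert that a maximal subgroup $M$ of $G$ containing $H$ ``does not contain $G_0$, so $M$ is core-free''. This is false: if $H \leqs G_0 < G$, then $G_0$ is itself a maximal subgroup of $G$ containing $H$, and it can happen that $G_0$ is the \emph{only} one. For instance, $H = {\rm AGL}_3(2) < S_8$ is primitive of order $1344$, so it lies in no intransitive or imprimitive maximal subgroup, and the unique primitive core-free maximal subgroup ${\rm PGL}_2(7)$ of $S_8$ has order $336$; hence the only maximal overgroup of $H$ in $S_8$ is $A_8$. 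The same phenomenon occurs for ${\rm M}_{12} < S_{12}$, ${\rm L}_2(8).3 < S_9$, ${\rm L}_2(7) < S_7$ and $A_5 < {\rm M}_{10}$, all of which appear in Table \ref{tab:sn} with depth greater than $3$. Since $G_0$ is not core-free, neither Proposition \ref{p:graph}(vi) nor Theorem \ref{t:ns}(i) applies to the pair $(G,G_0)$, and these subgroups are invisible to your enumeration: they are not point stabilizers of non-standard primitive actions of $G$, and they are not contained in any core-free maximal subgroup of $G$, so they enter neither your finite list of maximal pairs nor the subsequent subgroup search.

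The paper avoids this by taking $K$ maximal among the \emph{core-free} subgroups of $G$ containing $H$, and observing that such a $K$ is maximal in $L$ for $L = G$ or $L = G_0$ (the example ${\rm AGL}_3(2) < S_8$ is flagged there explicitly). One then has $b(G,H) \leqs b(G,K) \leqs b(L,K)$, and the base-size results of \cite{BGS} for primitive actions of both $S_n$ and $A_n$ give $d_G(H) = 3$ for $n \geqs 13$; the residual computation for $n \leqs 12$ must range over the core-free maximal subgroups of both $G$ and $G_0$ whose intersection with $G_0$ is primitive. Your argument can be repaired along exactly these lines, but as written the reduction step fails and the classification would omit several entries of Table \ref{tab:sn}.
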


\begin{remk}\label{r:sn}
Some remarks on the statement of Theorem \ref{t:sn}.
\begin{itemize}\addtolength{\itemsep}{0.2\baselineskip}
\item[{\rm (a)}] It is worth noting that $d_G(H)$ is even if and only if $G = S_{10}$ and $H = A_6.2^2$, in which case $d_G(H) = 4$.

\item[{\rm (b)}] Suppose $G = S_n$ and $H = S_k \times S_{n-k}$ is a maximal intransitive subgroup of $G$ with $k \leqs n/2$. Here we can identify the action of $G$ on $G/H$ with the natural action of $G$ on the set of $k$-element subsets of $\{1, \ldots, n\}$. Building on earlier work of Halasi \cite{Halasi}, the exact base size $b(G,H)$ has recently been determined in all cases (see \cite{RV,MS0}). And as mentioned earlier, several results on $d_G(H)$ in this setting are presented in \cite{FKR}, but the exact depth of $H$ is not known for all $n$ and $k$.

\item[{\rm (c)}] Suppose $G = S_n$ and $H = S_a \wr S_b$ is imprimitive, where $n = ab \geqs 6$ and $a,b \geqs 2$. Here the exact base size $b(G,H)$ is computed in \cite{BGL2,JJ,MS}. For example, if $a \leqs b$ then \cite[Theorem 2]{BGL2} gives $b(G,H) \leqs 4$, with equality if and only if $(a,b) = (2,3)$, and as a consequence we deduce that $d_G(H) \leqs 5+\epsilon$, where $\e = 2$ if $(a,b) = (2,3)$, otherwise $\e = 0$ (it is easy to check that $d_{S_6}(S_2 \wr S_3) = 7$). Moreover, \cite[Theorem 1.2]{JJ} gives $b(G,H) = 2$ if and only if $a \geqs 3$ and $b \geqs \max\{a+3,8\}$, which implies that $d_{S_{24}}(S_3 \wr S_8) = 3$, for example. In the general case, the main theorem of \cite{MS} implies that $b(G,H) \leqs \log_2n+2$ and thus \eqref{e:base} yields $d_G(H) \leqs 2\log_2n+3$. 
\end{itemize}
\end{remk}

Next assume $G = S_n$ or $A_n$ and let $H$ be an arbitrary core-free subgroup of $G$. Then \cite[Theorem 1(i)]{AB} gives $b(G,H) \leqs n - |S_n:G|$, with equality if $H = S_{n-1} \cap G$. So by combining the bound on $d_G(H)$ in \eqref{e:base} with two results from 
\cite{BKK} (namely, Theorem 6.19 and Proposition A.5), we can prove the following.

\begin{theorem}\label{t:sn2}
Let $G = S_n$ or $A_n$ with $n \geqs 3$. Then
\[
d_G(H) \leqs \left\{\begin{array}{ll}
2n-3 & \mbox{if $G = S_n$} \\
2n-2\lceil \sqrt{n-1}\rceil -1 & \mbox{if $G = A_n$}
\end{array}\right.
\]
for every subgroup $H$ of $G$, with equality if and only if one of the following holds:
\begin{itemize}\addtolength{\itemsep}{0.2\baselineskip}
\item[{\rm (i)}] $H = S_{n-1} \cap G$;
\item[{\rm (ii)}] $(G,H) = (A_6, (S_3 \wr S_2) \cap G)$, $(A_6, (S_3 \times S_3) \cap G)$, $(A_5,D_{10})$, $(A_4,C_2)$ or $(A_3,A_3)$.
\end{itemize}
\end{theorem}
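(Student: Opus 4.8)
The plan is to derive the upper bounds from the inequality $d_G(H)\leqs 2b(G,H)-1$ in \eqref{e:base} together with the base-size estimates of \cite[Theorem 1(i)]{AB}, and then to pin down the equality cases using the exact values $d_{S_n}(S_{n-1})=2n-3$ from \cite[Theorem 6.19]{BKK} and $d_{A_n}(A_{n-1})=2n-2\lceil\sqrt{n-1}\rceil-1$ from \cite[Proposition A.5]{BKK}. First I would dispose of the subgroups that are not core-free: for $n\geqs3$ the only proper normal subgroups of $S_n$ and $A_n$ are the trivial group, $A_n$ (when $G=S_n$) and, in the exceptional cases $G=S_4,A_4$, the Klein four-group, so any non-core-free $H$ is normal in $G$ and hence $d_G(H)\leqs2$ by \cite[Corollary 3.2]{KK}; since the asserted bound is always at least $3$ this produces no equality cases, and the trivial subgroup has depth $1$, which meets the bound only for $(G,H)=(A_3,1)=(A_3,S_{n-1}\cap A_3)$, already listed in~(i). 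So from now on $H$ is core-free and nontrivial, and we view $G\leqs{\rm Sym}(\O)$ as transitive with point stabiliser $H$.

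For $G=S_n$ the argument is immediate: \cite[Theorem 1(i)]{AB} gives $b(G,H)\leqs n-1$, so \eqref{e:base} yields $d_G(H)\leqs2n-3$; if equality holds then $b(G,H)=n-1$, which by the classification of the extremal case in \cite[Theorem 1(i)]{AB} forces $H$ to be $G$-conjugate to $S_{n-1}$, and conversely $d_{S_n}(S_{n-1})=2n-3$ by \cite[Theorem 6.19]{BKK}. So for $G=S_n$ equality holds precisely in case~(i).

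For $G=A_n$ I would separate $H$ conjugate to $A_{n-1}$, where \cite[Proposition A.5]{BKK} gives exactly $d_G(H)=2n-2\lceil\sqrt{n-1}\rceil-1$ (case~(i)), from the remaining core-free nontrivial subgroups. For the latter, the key point is that the base size drops well below $n-2$: by \cite[Theorem 1(i)]{AB}, or alternatively by embedding $H$ in a maximal subgroup $M\neq A_{n-1}$ of $A_n$, using the O'Nan--Scott description of such $M$, passing to the corresponding subgroup $M^+\leqs S_n$ (with $M=M^+\cap A_n$) to get $b(A_n,M)\leqs b(S_n,M^+)$ when $M^+\not\leqs A_n$, and then using $b(S_n,S_k\times S_{n-k})\leqs\lceil2(n-1)/3\rceil$ for $k\geqs2$ (see \cite{FKR,RV,MS0}), $b(S_n,S_a\wr S_b)\leqs\log_2 n+2$ (see \cite{MS}), and $b(A_n,M)\leqs7$ when $M$ is primitive on $\{1,\ldots,n\}$ (since $G$ is then non-standard), one obtains $b(A_n,H)\leqs n-\lceil\sqrt{n-1}\rceil$ for all $n$ outside a bounded range. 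Hence \eqref{e:base} together with the monotonicity of Proposition \ref{p:graph}(vi) gives $d_G(H)\leqs d_G(M)\leqs2n-2\lceil\sqrt{n-1}\rceil-1$; moreover these base-size bounds are far below $n-\lceil\sqrt{n-1}\rceil$ for large $n$, so in fact $d_G(M)<2n-2\lceil\sqrt{n-1}\rceil-1$ and no equality case other than $A_{n-1}$ survives (a proper subgroup of $A_{n-1}$ lies in a maximal subgroup of $A_{n-1}$, say $A_{n-2}$, whose base size in $A_n$ is roughly $n/2$, so its depth in $A_n$ is strictly smaller still). This establishes the $A_n$ bound and shows that, outside a bounded range of $n$, equality forces $H$ to be conjugate to $A_{n-1}$.

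Finally, the remaining small values of $n$ — in particular $n\leqs6$, where $A_n$ need not be almost simple and a direct treatment is needed in any case — would be handled by computing $d_G(H)$ for every subgroup $H$ explicitly, via the character-theoretic method of Section \ref{sss:inc}; this produces the extra pairs $(A_6,(S_3\wr S_2)\cap A_6)$, $(A_6,(S_3\times S_3)\cap A_6)$, $(A_5,D_{10})$, $(A_4,C_2)$ and $(A_3,A_3)$ recorded in~(ii). The main obstacle is the value $d_{A_n}(A_{n-1})=2n-2\lceil\sqrt{n-1}\rceil-1$ itself: it lies strictly below $2b(A_n,A_{n-1})-1=2n-5$, so it cannot come from \eqref{e:base} and must be imported from \cite[Proposition A.5]{BKK} — this is the step that forces the correct exponent $\lceil\sqrt{n-1}\rceil$. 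The rest is bookkeeping: checking across the O'Nan--Scott families that nothing but $A_{n-1}$ attains the bound for large $n$, and running the small cases carefully enough to be sure the list in~(ii) is complete.
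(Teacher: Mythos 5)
Your overall strategy is the one the paper follows: combine the bound $d_G(H)\leqs 2b(G,H)-1$ from \eqref{e:base} with the base-size estimate of \cite[Theorem 1(i)]{AB}, import the exact values of $d_{S_n}(S_{n-1})$ and $d_{A_n}(A_{n-1})$ from \cite{BKK}, reduce all remaining subgroups to maximal overgroups via Proposition \ref{p:graph}(vi) and the base-size literature (Halasi, Morris--Spiga, the primitive case), and settle the small degrees by direct computation. Two steps need repair.

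First, your disposal of the non-core-free subgroups rests on a false claim: a subgroup with nontrivial core need not be normal. For $G=S_4$ the subgroup $H=D_8$ contains the normal Klein four-group, is not normal in $G$, and has $d_G(H)=4$ (this is precisely the example in Remark \ref{r:core}), not $d_G(H)\leqs 2$. The theorem is not threatened, since $4<2\cdot 4-3=5$, but your argument as written would conclude $d_{S_4}(D_8)\leqs 2$, which is wrong. The fix is the one the paper adopts: treat $n\in\{3,4,5\}$ for $S_n$ (and $n\leqs 7$ for $A_n$) by direct computation before invoking almost simplicity and \cite{AB}, rather than via the normality shortcut.

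Second, for $G=S_n$ you obtain the equality classification from ``the classification of the extremal case in \cite[Theorem 1(i)]{AB}''. The paper quotes that result only in the form ``$b(G,H)\leqs n-|S_n:G|$, with equality \emph{if} $H=S_{n-1}\cap G$'', and then proves the converse itself: it embeds an arbitrary core-free $H\neq S_{n-1}$ in a maximal core-free subgroup $K$ of $L=S_n$ or $A_n$, checks $b(L,K)\leqs n-2$ case by case from the base-size literature, and handles $K=S_{n-1}$ separately (noting $b(S_n,A_{n-1})=n-2$ and $b(S_{n-1},H)\leqs n-2$ for $H$ core-free in $S_{n-1}$). If the cited theorem does not in fact contain the ``only if'' direction, this step of your proof is missing, and in any case the short argument should be supplied. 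Beyond these two items, your treatment of $A_n$ is the paper's (including the key observation that $d_{A_n}(A_{n-1})$ must be imported from \cite[Proposition A.5]{BKK} because it lies strictly below $2b(A_n,A_{n-1})-1$), except that the crossover range where the Halasi-type bounds must be verified explicitly is larger than you suggest (the paper checks $8\leqs n\leqs 43$ for the intransitive subgroups with $k>\sqrt{n}$), and your bound $\lceil 2(n-1)/3\rceil$ for $b(S_n,S_k\times S_{n-k})$ is only quoted for $k\leqs\sqrt{n}$; for larger $k$ one needs the $\lceil\log_t n\rceil(t-1)$ bound, as in the paper.
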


Now suppose $G$ is an almost simple classical group, with $H$ a core-free subgroup. By combining the upper bound on $d_G(H)$ in part (iii) of Theorem \ref{t:ns} with upper bounds on $b(G,H)$ derived in \cite[Section 3]{HLM} for subspace actions, we obtain the following result (here $r$ denotes the untwisted Lie rank of $G$, which coincides with the rank of the ambient simple algebraic group).

\begin{theorem}\label{t:class}
There exist absolute constants $a$ and $b$ such that 
\[
d_G(H) \leqs ar+b
\]
for every core-free subgroup $H$ of every almost simple classical group $G$ of rank $r$. Moreover, we can take $a=4$ and $b=29$.
\end{theorem}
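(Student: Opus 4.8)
The plan is to combine the bound $d_G(H) \leqs 2b(G,H) - 1$ from \eqref{e:base} with the base-size estimates for subspace actions of classical groups established in \cite[Section 3]{HLM}, after reducing, via Proposition~\ref{p:graph}(vi), to the cases where the action of $G$ on $G/H$ is non-standard or is a subspace action. Throughout let $G_0$ be the socle of $G$ and $V$ the natural module, so that $n := \dim V \leqs 2r+1$.

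First I would set up the reduction. Let $H$ be a core-free subgroup of $G$. If $H$ is contained in a core-free maximal subgroup $M$ of $G$, then $d_G(H) \leqs d_G(M)$ by Proposition~\ref{p:graph}(vi), so it suffices to bound $d_G(M)$; here the action of $G$ on $G/M$ is primitive, hence either non-standard or standard (the latter meaning, roughly, a subspace action). The only other possibility is that every maximal subgroup of $G$ containing $H$ contains $G_0$; this can genuinely occur --- for instance when $G$ induces a graph or graph-field automorphism and $H=P_1$ lies in $G_0$ --- but then $H \leqs G_0$, so the pointwise stabilizer in $G$ of any subset of $G/H$ lies in $G_0$, whence $b(G,H)$ is bounded above by a base size for a subspace-type action of a group with socle $G_0$, to which the estimates of \cite{HLM} still apply. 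Finally, if $H \cap G_0 = 1$ then $H$ embeds in $\Out(G_0)$ and a routine fixed point ratio estimate bounds $b(G,H)$ by an absolute constant. In every case it therefore suffices to prove $d_G(H) \leqs 4r+29$ when the action of $G$ on $G/H$ is non-standard, and when it is a subspace action. The non-standard case is immediate: Theorem~\ref{t:ns}(iii) gives $d_G(H) \leqs 7 \leqs 4r+29$ since $r \geqs 1$.

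It remains to treat the subspace actions, where $H \cap G_0$ stabilizes a proper nonzero subspace of $V$ (or one of the closely related configurations built into the definition of a standard group, such as ${\rm Sp}_n(q) < {\rm SL}_n(q)$, or ${\rm O}_n^{\pm}(q) < {\rm Sp}_n(q)$ with $q$ even). For each such action, \cite[Section 3]{HLM} supplies an explicit upper bound for $b(G,H)$ that is linear in $n$; in particular $b(G,H) \leqs n + c \leqs 2r + (c+1)$ for an absolute constant $c$. Substituting into \eqref{e:base} gives $d_G(H) \leqs 2b(G,H) - 1 \leqs 4r + (2c+1)$, and the bound $d_G(H) \leqs 4r + 29$ follows provided $c \leqs 14$. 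The value $b=29$ is then obtained by taking the worst case over all the subspace families, together with the bound $7$ from the non-standard case.

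The substance of the proof --- and the main obstacle --- is the verification just invoked, namely extracting the constant $c$ from the case analysis in \cite[Section 3]{HLM}. That analysis is organised according to the type of $G$ and to the type (totally singular, non-degenerate, or non-singular) and dimension $k$ of the stabilised subspace, and for the bulk of the families the sharper estimate $b(G,H) \leqs \lceil n/k \rceil + c'$ comfortably gives $b(G,H) \leqs 2r + c'$. One must pay attention to the configurations with $k$ small, where $b(G,H)$ does grow linearly in $r$ (as already illustrated by $b({\rm PGL}_n(q),P_1) = n+1$), and to a finite list of small-rank or small-field exceptions, for which either the more precise estimates available in \cite{HLM} or a handful of direct computations are needed. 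Since the statement asserts only the existence of suitable absolute constants --- with $a=4$ and $b=29$ offered as one admissible choice --- there is ample slack, and no delicate optimisation is required once the uniform bound $b(G,H) \leqs 2r + c$ for subspace actions is in place.
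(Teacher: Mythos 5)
Your strategy is the paper's: pass to a maximal core-free overgroup, dispose of the non-subspace/non-standard actions by an absolute bound, and for subspace actions feed the linear-in-$n$ base-size bounds of \cite[Section 3]{HLM} into $d_G(H) \leqs 2b(G,H)-1$ using $n \leqs 2r+1$. But two steps need repair. First, the reduction: your claim that if every maximal subgroup of $G$ containing $H$ contains $G_0$ then $H \leqs G_0$ is unjustified (when $|G:G_0|$ is composite, $H$ may lie in a proper intermediate group $G_0 < HG_0 < G$ without lying in $G_0$), and the subsequent appeal to "a subspace-type action of a group with socle $G_0$" is left vague. The clean fix, which is what the paper does, is to take $K$ maximal among the core-free subgroups of $G$ containing $H$; such a $K$ is automatically a maximal subgroup of some almost simple group $L$ with $G_0 \leqs L \leqs G$, and $b(G,H) \leqs b(G,K) \leqs b(L,K)$, so one only ever needs base-size bounds for primitive actions of groups with socle $G_0$. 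This also absorbs your separate case $H \cap G_0 = 1$.

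Second, and more importantly, the numerical content of the statement --- that one may take $a=4$ and $b=29$ --- rests entirely on the constant $c$ that you explicitly leave unextracted and identify as "the main obstacle". That verification is not optional slack: it is the substance of the second assertion. The paper pins it down as follows: for non-subspace $K$ the main theorem of \cite{B07} gives $b(L,K) \leqs 5$; for subspace $K$ with $L = G_0$, combining \cite[Theorem 3.3]{HLM} with \cite[Proposition 3.5]{HLM} gives $b(L,K) \leqs n+11 \leqs 2r+12$; and in general, rerunning the argument of \cite[Theorem 3.1]{HLM} using that $L/G_0$ is solvable with a normal series of length at most $3$ with cyclic factors gives $b(L,K) \leqs n+14 \leqs 2r+15$, whence $d_G(H) \leqs 2(2r+15)-1 = 4r+29$. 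As written, your argument establishes the existence of absolute constants $a$ and $b$ (modulo the reduction issue above) but not the stated values.
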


\begin{remk}\label{r:class}
As noted above, the base size bounds in \cite[Section 3]{HLM} for subspace actions of classical groups are a key ingredient in the proof of Theorem \ref{t:class}. And we expect that a linear upper bound in terms of the rank is best possible. 

\begin{itemize}\addtolength{\itemsep}{0.2\baselineskip}
\item[{\rm (a)}] For example, consider the simple group $G = {\rm GL}_n(2)$ with $n \geqs 3$ and let $V$ be the natural module for $G$. As noted in the proof of \cite[Proposition 2.13]{AB}, we have $b(G,H) \leqs n = r+1$ for every proper subgroup $H$ of $G$, with equality if and only if $H$ is the stabilizer of a $1$-space or a hyperplane in $V$ (in which case, $H$ is a maximal parabolic subgroup of type $P_1$ or $P_{n-1}$). Then \eqref{e:base} gives $d_G(H) \leqs 2r+1$ for every subgroup $H$ and it appears that equality holds when $H = P_1$ or $P_{n-1}$ (we have checked this computationally for $3 \leqs n \leqs 9$). 

\item[(b)] Referring to the above example, it is interesting to consider how the depth of $H$ in $G$ relates to the depth of $W_H$ in $W$, where $W = S_n$ is the Weyl group of $G$ and $W_H = S_{n-1}$ is the corresponding parabolic subgroup of $W$. By the aforementioned theorem of Burciu et al. \cite{BKK} we have $d_{W}(W_H) = 2r-1$ and so for $n \leqs 9$ we deduce that $d_G(H) = d_{W}(W_H)+2$.
\end{itemize}
\end{remk}

The proof of Cameron's base size conjecture for almost simple sporadic groups is presented in \cite{BOW}. In fact, the exact base size of every primitive almost simple sporadic group is determined in \cite{BOW}, with the exception of two specific cases involving the Baby Monster, which were later resolved in \cite{NNOW} using more sophisticated computational methods. It remains a difficult open problem to determine $b(G,H)$ precisely for every core-free subgroup $H$ of every  almost simple sporadic group $G$. So it is natural to ask if it is possible to compute $d_G(H)$ at the same level of generality.

Here we present three main results on the depth of subgroups of almost simple sporadic groups, which bring us very close to achieving this goal. We begin in Theorem \ref{t:main0} by determining the exact depth of every maximal subgroup of every simple sporadic group. 
Note that Table \ref{tab:max} is presented in  Section \ref{s:tab} (also see Remark \ref{r:tabs}).

\begin{theorem}\label{t:main0}
Let $G$ be a simple sporadic group, let $H$ be a maximal subgroup of $G$ and set $d = d_G(H)$. Then either $d = 3$, or $(G,H,d)$ is recorded in Table \ref{tab:max}. In particular, $d \leqs 11$, with equality if and only if $G = {\rm M}_{24}$ and $H = {\rm M}_{23}$.
\end{theorem}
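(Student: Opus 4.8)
The plan is computational, and it splits the maximal subgroups of the $26$ simple sporadic groups into two families according to their base size.

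First, record the standing reduction: since $G$ is simple and $H$ is a proper subgroup, $H$ is core-free and is not normal in $G$, so $d_G(H) \geqs 3$ by the theorem of Kadison and K\"ulshammer recalled in Section~\ref{ss:depth}. Hence we may regard $G$ as a primitive permutation group with point stabilizer $H$ and invoke the base-size bound \eqref{e:base}. The classification of the maximal subgroups of every simple sporadic group is now complete (including the Monster), and by \cite{BOW} — together with the two Baby Monster actions resolved in \cite{NNOW} — we know $b(G,H) = 2$ for every conjugacy class of maximal subgroups $(G,H)$ outside a short, explicit list. For each such pair, $b(G,H)=2$ forces $d_G(H) = 3$ (as noted after \eqref{e:base}), which is the first alternative of the theorem.

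It remains to treat the finitely many maximal pairs with $b(G,H) \geqs 3$ (all of which satisfy $b(G,H) \leqs 7$, since every primitive sporadic group is non-standard). For each of these I would compute $d_G(H)$ exactly via the character-theoretic method of Section~\ref{sss:inc}, following \cite{BDK,BKK}: take the ordinary character tables of $G$ and $H$ together with the class fusion of $H$ in $G$ — all available from the {\sc Atlas} and the GAP character table library, with small cases cross-checked against the explicit permutation representation — form the induction–restriction matrix $M = \big(\langle \chi|_H,\, \psi\rangle\big)_{\chi \in \operatorname{Irr}(G),\, \psi \in \operatorname{Irr}(H)}$, and read off $d_G(H)$ from the least $n$ at which the support of the appropriate power of $MM^{\top}$, respectively $M^{\top}M$, stabilizes. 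This is a routine finite computation, since the matrices involved have size at most $194$ (attained by the Monster). Running it over the list produces precisely the triples recorded in Table~\ref{tab:max}; any pair with $b(G,H) \geqs 3$ whose depth turns out to be $3$ is absorbed into the first case and does not appear in the table. Inspecting the completed table then gives $d \leqs 11$, with equality exactly for $(G,H) = ({\rm M}_{24},{\rm M}_{23})$. This value genuinely requires the exact computation rather than \eqref{e:base}, since here $b(G,H) = 7$ yields only the weaker bound $d_G(H) \leqs 13$.

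The main obstacle is the large sporadic groups — above all the Monster and the Baby Monster — where the permutation module $\mathbb{C}[G/H]$ is far too large to construct directly, so the entire argument must be carried out at the level of character tables and class fusions. The delicate point is to pin down the correct class fusion in the handful of cases where it is not already tabulated, using element orders, power maps and, where available, explicit matrix or permutation representations. A secondary issue is completeness: one must cross-check the list of maximal pairs with $b(G,H) \geqs 3$ against \cite{BOW,NNOW} to be certain that no case has been missed, and the monotonicity relation $d_G(K) \leqs d_G(H)$ for $K \leqs H$ (Proposition~\ref{p:graph}(vi)) provides a useful consistency check on the tabulated values.
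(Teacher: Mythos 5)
Your proposal follows essentially the same route as the paper: reduce via \cite{BOW,NNOW} to the finitely many maximal pairs with $b(G,H)\geqs 3$ (all others having depth $3$ by Proposition \ref{p:bases}(ii)), then compute $d_G(H)$ exactly for those pairs from the inclusion matrix using character tables and class fusions, handling untabulated fusions via candidate fusion maps. The only cosmetic difference is that where a fusion is unknown the paper computes the inclusion matrix for \emph{every} candidate fusion and observes that all give the same depth, rather than first pinning down the correct one.
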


In order to extend Theorem \ref{t:main0} to arbitrary subgroups, it will be convenient to introduce some additional terminology. Let us say that a proper subgroup $H$ of $G$ is \emph{$k$-maximal} if $k$ is the smallest positive integer such that there exists a chain of subgroups
\[
H = H_k < H_{k-1} < \cdots < H_0 = G,
\]
where each $H_i$ is maximal in $H_{i-1}$ (so a subgroup is $1$-maximal if and only if it is maximal). Let $\mathcal{M}_k$ be the set of $k$-maximal subgroups of $G$. Then a slightly simplified version of our main result on the depth of subgroups of simple sporadic groups is Theorem \ref{t:main} below. Once again, Tables \ref{tab:main2} and \ref{tab:main3} are presented in Section \ref{s:tab} at the end of the paper (see Remark \ref{r:tabs}), noting that the subgroups $H$ arising in parts (ii) and (iii) are recorded up to conjugacy in $G$.

\begin{theorem}\label{t:main}
Let $G$ be a simple sporadic group and set $d = d_G(H)$, where $H \in \mathcal{M}_k$ is nontrivial and $k \geqs 2$. Then one of the following holds:
\begin{itemize}\addtolength{\itemsep}{0.2\baselineskip}
\item[{\rm (i)}] $d=3$;
\item[{\rm (ii)}] $H \in \mathcal{M}_2$ and $(G,H,d)$ is one of the cases in Table \ref{tab:main2}.
\item[{\rm (iii)}] $H \in \mathcal{M}_3$, $d=5$ and $(G,H)$ is one of the cases in Table \ref{tab:main3}.
\item[{\rm (iv)}] $G = \mathbb{B}$, $H < A$ and $d \leqs 5$, where $A = 2.{}^2E_6(2).2$ or $2^{1+22}.{\rm Co}_2$.
\end{itemize}
\end{theorem}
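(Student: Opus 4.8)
The plan is to combine the monotonicity of depth under inclusion (Proposition \ref{p:graph}(vi)), the determination of the depth of maximal subgroups in Theorem \ref{t:main0}, and the character-theoretic method of Section \ref{sss:inc}, with the base-size bound \eqref{e:base} serving as a pruning device. Since $G$ is simple, every proper nontrivial subgroup of $G$ is core-free, so $d_G(H) \geqs 3$ always; it therefore suffices to pin down the $k$-maximal subgroups with $d_G(H) > 3$. Suppose $H \in \mathcal{M}_k$ with $d_G(H) > 3$ and fix a chain $H = H_k < H_{k-1} < \cdots < H_1 < H_0 = G$ realising its $k$-maximality, so that $H_1$ is maximal in $G$. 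By Proposition \ref{p:graph}(vi) we have $d_G(H_1) \geqs d_G(H) > 3$, so Theorem \ref{t:main0} forces $H_1$ to be one of the finitely many maximal subgroups recorded in Table \ref{tab:max}. Thus it suffices to descend through the subgroup lattices of these maximal subgroups.

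I would carry out this descent level by level, computing $d_G(H)$ exactly for each subgroup $H$ encountered. By monotonicity, once a subgroup of depth $3$ is reached the entire down-set below it has depth $3$, so the lattice may be pruned there; in particular, as soon as a short random search exhibits two conjugates of $H$ with trivial intersection we have $b(G,H) = 2$ and hence $d_G(H) = 3$ by \eqref{e:base}, and we stop. For the remaining subgroups, $d_G(H)$ is computed via the method of Section \ref{sss:inc}: starting from the ordinary character table of $G$ (available in the \textsf{GAP} character table library), the character table of $H$, and the class fusion of $H$ in $G$, one forms the inclusion matrix of $\mathbb{C}H \subseteq \mathbb{C}G$ and extracts the exact value of $d_G(H)$ from the stage at which the zero-patterns of its iterated products stabilise, as described there. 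Running this, one finds that subgroups of depth greater than $3$ occur only among the $2$-maximal and $3$-maximal subgroups: the $2$-maximal ones are precisely the entries of Table \ref{tab:main2}, the $3$-maximal ones are precisely the entries of Table \ref{tab:main3} and all have depth $5$, and every maximal subgroup of every entry of Table \ref{tab:main3} has depth $3$. Together with monotonicity this establishes parts (i)--(iii).

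This argument goes through for every simple sporadic group $G$ and every maximal subgroup in Table \ref{tab:max}, with the sole exceptions $G = \mathbb{B}$ and $A = 2.{}^2E_6(2).2$ or $2^{1+22}.{\rm Co}_2$. For these $A$ the index $|G:A|$ is far too large to build a faithful permutation representation of $G$ of that degree, or to compute the character table of $A$ together with its class fusion in $G$, so the method of Section \ref{sss:inc} is not available for subgroups of $A$. However, Theorem \ref{t:main0} gives $d_{\mathbb{B}}(A) \leqs 5$, so $d_{\mathbb{B}}(H) \leqs 5$ for every $H < A$ by Proposition \ref{p:graph}(vi), which is exactly part (iv).

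I expect the main obstacle to be this computational step: for the larger entries $M$ of Table \ref{tab:max} one has to enumerate enough of the subgroup lattice of $M$ and, for each subgroup met, either certify $b(G,H) = 2$ (after which pruning applies) or assemble the data required for the exact depth computation --- in particular the character table of $H$ and its class fusion in $G$ when $H$ is itself of substantial order. The pruning afforded by \eqref{e:base} is what makes the descent terminate after only a few levels, and the two subgroups of $\mathbb{B}$ above are the only genuinely intractable cases, which is why the conclusion for $\mathbb{B}$ is weakened to the bound in (iv).
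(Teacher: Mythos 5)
Your overall strategy is the same as the paper's: use Proposition \ref{p:graph}(vi) to reduce to descending through the subgroup lattices of the maximal subgroups listed in Table \ref{tab:max}, prune via random searches certifying $b(G,H)=2$, and compute exact depths from inclusion matrices built out of character tables and class fusions (in {\sc Magma} and/or \textsf{GAP}). That is precisely how the paper proves parts (i)--(iii), so the bulk of your argument is fine.

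There is, however, a concrete error in your treatment of part (iv). You assert that ``Theorem \ref{t:main0} gives $d_{\mathbb{B}}(A) \leqs 5$'' for both exceptional subgroups $A$, and deduce $d_{\mathbb{B}}(H) \leqs 5$ for all $H < A$ by monotonicity. But Table \ref{tab:max} records $d_{\mathbb{B}}(2.{}^2E_6(2).2) = 7$, not $5$; monotonicity therefore only yields $d_{\mathbb{B}}(H) \leqs 7$ for subgroups of $A = 2.{}^2E_6(2).2$, which is strictly weaker than the claim in (iv). The paper closes this gap with an additional argument (Case 1 of Proposition \ref{p:b2}): every maximal subgroup of $A$ has the form $B = 2.J$ with $J$ maximal in ${}^2E_6(2).2$; for $J = {}^2E_6(2)$ the character table and fusion of $B$ are available in \cite{GAPCTL} and one computes $d_G(B) = 5$ directly, while for $J \ne {}^2E_6(2)$ one invokes the proof of \cite[Theorem 4.1]{B23} to get $b(G,B) \leqs 3$ and hence $d_G(B) \leqs 5$ via \eqref{e:base}. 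Only then does monotonicity give $d \leqs 5$ for every proper subgroup of $A$. Your proposal is missing this step, and without it part (iv) is not established for $A = 2.{}^2E_6(2).2$. (For $A = 2^{1+22}.{\rm Co}_2$ your argument is correct as stated, and you are also right that the third Baby Monster case $2^{9+16}.{\rm Sp}_8(2)$ can be resolved completely by the general descent, as the paper does using a matrix representation and candidate fusion maps.)
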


\begin{remk}\label{r:main1}
Let us record a couple of comments on the statement of Theorem \ref{t:main}.
\begin{itemize}\addtolength{\itemsep}{0.2\baselineskip}
\item[{\rm (a)}] Notice that Theorem \ref{t:main} implies that if $G \ne \mathbb{B}$ is a simple sporadic group, then every nontrivial $k$-maximal subgroup of $G$ with $k \geqs 4$  has depth $3$. We do not know if the Baby Monster $G$ has a $4$-maximal subgroup $H$ with $d_G(H) >3$, but our analysis does imply that such a subgroup would need to be part of a chain
\[
\mbox{ } \hspace{10mm} H < 2.J< 2.{}^2E_6(2).2<G,
\]
where $J$ is a maximal parabolic subgroup of ${}^2E_6(2).2$.

\item[{\rm (b)}] Several computational difficulties arise when $G$ is the Baby Monster. For example, $G$ does not admit a suitable permutation or matrix representation, which would allow us to proceed as in other cases. In particular, the character tables of many $2$-maximal subgroups of $G$ are not known and this is the main reason why it remains an open problem to determine the exact depth of every subgroup of $G$. As commented above, similar issues arose in \cite{BOW}, where the exact base size $b(G,H)$ is computed for every maximal subgroup $H$, with two exceptions that were dealt with separately in a later paper \cite{NNOW}, using specialized computational techniques. As indicated in part (iv) of Theorem \ref{t:main}, for $G = \mathbb{B}$ we have reduced the problem to the situation where $H$ is a proper subgroup of a maximal subgroup $A$ of $G$, with $A = 2.{}^2E_6(2).2$ or $2^{1+22}.{\rm Co}_2$. We refer the reader to Remarks \ref{r:2e6} and \ref{r:co2} for further reductions and a detailed discussion of these remaining open cases.
\end{itemize}
\end{remk}

Our final result for sporadic groups is the following, which gives the depth of every subgroup of every almost simple sporadic group of the form $G = G_0.2$. (Again, the tables referred to in part (iii) are presented in Section \ref{s:tab}.)

\begin{theorem}\label{t:main2}
Let $G = G_0.2$ be an almost simple sporadic group and set $d = d_G(H)$, where $H$ is a proper nontrivial subgroup of $G$. Then one of the following holds:
\begin{itemize}\addtolength{\itemsep}{0.2\baselineskip}
\item[{\rm (i)}] $d=3$;
\item[{\rm (ii)}] $H = G_0$ and $d = 2$;
\item[{\rm (iii)}] $H \in \mathcal{M}_k$ with $k \leqs 4$ and $(G,H,d)$ is one of the cases in Table \ref{tab:main41} or \ref{tab:main42}.
\end{itemize}
\end{theorem}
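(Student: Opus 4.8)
The overall plan is to reduce the statement to a finite computation by exploiting the monotonicity of depth along subgroup inclusions, and then to pin down the finitely many exceptional depths using the base-size bound \eqref{e:base} together with the character-theoretic method of Section~\ref{sss:inc}; here $G = G_0.2$ runs over the (roughly a dozen) almost simple groups whose socle $G_0$ is a sporadic simple group with $|\mathrm{Out}(G_0)| = 2$, so the strategy parallels the proofs of Theorems~\ref{t:main0} and \ref{t:main} for the simple groups. First I would dispose of the extremes. Since $G_0 \normeq G$, \cite[Corollary~3.2]{KK} gives $d_G(G_0) \leqs 2$, and $d_G(G_0) \neq 1$ because the outer automorphism of $G_0$ does not fix every conjugacy class of $G_0$, so the criterion of \cite[Theorem~1.7]{BK} fails for $H = G_0$; hence $d_G(G_0) = 2$, which is case~(ii). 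For any other proper nontrivial subgroup $H$, the only normal subgroups of $G$ being $1$, $G_0$ and $G$, the normal core of $H$ is trivial, so $H$ is core-free and $d_G(H) \geqs 3$ by \cite{KK}. It remains to identify the subgroups with $d_G(H) \geqs 4$ and to show that each is $k$-maximal for some $k \leqs 4$.

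The decisive tool is Proposition~\ref{p:graph}(vi): if $K \leqs H$ with $H$ core-free then $d_G(K) \leqs d_G(H)$. Hence, as soon as a $k$-maximal subgroup $H$ is shown to have $d_G(H) = 3$, every nontrivial subgroup of $H$ has depth $3$ as well, and that whole downward cone can be set aside. This turns the problem into a finite top-down search: compute $d_G(H)$ for each maximal subgroup $H$ of $G$ (the analogue for $G_0.2$ of Theorem~\ref{t:main0}); for each maximal $H$ with $d_G(H) \geqs 4$, recurse into the maximal subgroups of $H$; and prune a branch the instant depth $3$ appears. The assertion $k \leqs 4$ is then exactly that this descent stabilises by the fourth level, the surviving triples $(G,H,d)$ being those recorded in Tables~\ref{tab:main41} and \ref{tab:main42}. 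A good deal of the required data — especially for subgroups $H$ contained in the socle $G_0$, where the action of $G$ on $G/H$ restricts to $G_0$ as a union of at most two actions on coset spaces $G_0/K$ with $K$ a $G$-conjugate of $H$ — can be taken over from the analysis behind Theorems~\ref{t:main0} and \ref{t:main}, so the genuinely new effort is concentrated on the subgroups with $H \not\leqs G_0$.

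For the individual depth calculations, two mechanisms suffice. If $b(G,H) = 2$ — as for the large majority of maximal subgroups $H$ of $G$, by the determination of base sizes for primitive almost simple sporadic groups in \cite{BOW,NNOW} — then \eqref{e:base} yields $d_G(H) = 3$. Otherwise, when $b(G,H) \geqs 3$, I would compute $d_G(H)$ exactly via the character-theoretic algorithm of Section~\ref{sss:inc}, which requires the ordinary character tables of $G$ and $H$ and the induction map $\mathrm{Irr}(H) \to \mathbb{Z}\mathrm{Irr}(G)$; these are available in the \textsf{GAP} character table library for the relevant groups, or can be reconstructed from a convenient permutation or matrix representation of $G$. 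The computational obstructions specific to the Baby Monster do not arise here, since $\mathrm{Out}(\mathbb{B})$ is trivial; the heaviest case is $G = \mathrm{Fi}_{24}$, but the character tables and fusion maps needed for its $k$-maximal subgroups with $k \leqs 4$ are all within reach.

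I expect the main obstacle to be precisely the exact depth computations in the cases with $b(G,H) \geqs 3$: there \eqref{e:base} only confines $d_G(H)$ to the range from $3$ to $2b(G,H)-1$, and one must fall back on the character-theoretic method, which is contingent on having the appropriate character tables and induction data in hand. For the larger groups $\mathrm{Fi}_{24}$, $\mathrm{HN}.2$ and $\mathrm{Fi}_{22}.2$ a handful of these tables are not available off the shelf and will have to be built directly, and there is the further delicacy of correctly detecting the rare instances of even depth $d = 4$, which \eqref{e:base} cannot reveal. The residual effort is bookkeeping: confirming that no exceptional subgroup lies below the fourth level and that the tabulated list is exhaustive.
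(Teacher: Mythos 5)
Your proposal is correct and follows essentially the same route as the paper: dispose of $H=G_0$ via Proposition \ref{p:socle}, observe that all other proper nontrivial subgroups are core-free, and then run a top-down descent through the subgroup lattice, pruning via Proposition \ref{p:graph}(vi) whenever depth $3$ is reached, using $b(G,H)=2$ (from \cite{BOW}) to force depth $3$ and the inclusion-matrix method otherwise, with ${\rm Fi}_{24}$ requiring special treatment via Web Atlas generators. The only cosmetic difference is that the paper justifies $d_G(G_0)\ne 1$ by citing the Feit--Seitz theorem rather than asserting directly that some $G_0$-class is not $G$-invariant.
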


\renewcommand{\arraystretch}{1.3}

\begin{remk}\label{r:t7}
Let $G \ne \mathbb{B}$ be an almost simple sporadic group and let $H \in \mathcal{M}_k$ be a nontrivial $k$-maximal subgroup of $G$. By combining Theorems \ref{t:main0}-\ref{t:main2}, it follows that $d_G(H) = 3$ if $k \geqs 5$. In addition, if $H \in \mathcal{M}_4$ then $d_G(H) \geqs 4$ if and only if one of the following holds:
\begin{itemize}\addtolength{\itemsep}{0.2\baselineskip}
\item[{\rm (a)}] $G = {\rm Fi}_{22}.2$ and $H = {\rm U}_4(3).2_2$ or $\O_8^{+}(2)$ (in the former  case, $H$ contains an involutory graph automorphism $x$ with $C_S(x) = {\rm PSp}_4(3)$, where $S = {\rm U}_4(3)$ is the socle of $H$). 

\item[{\rm (b)}] $G = {\rm Fi}_{24}'.2$ and $H = {\rm P\O}_8^{+}(3).3$.
\end{itemize}
In each of these cases we have $d_G(H)=5$ and we can embed $H$ in $G$ via the following maximal chains:
\[
\begin{array}{l}
{\rm U}_4(3).2_2 < 2 \times {\rm U}_4(3).2_2 < 2 \times {\rm U}_4(3).2^2 < S_3 \times {\rm U}_4(3).2^2 < {\rm Fi}_{22}.2 \\
\O_8^{+}(2) < \O_8^{+}(2) \times 2 < \O_8^{+}(2).2 \times 2 < \O_8^{+}(2).S_3 \times 2 < {\rm Fi}_{22}.2 \\
{\rm P\O}_8^{+}(3).3 < 2 \times {\rm P\O}_8^{+}(3).3 < 2 \times {\rm P\O}_8^{+}(3).S_3 < S_3 \times {\rm P\O}_8^{+}(3).S_3 < {\rm Fi}_{24}'.2
\end{array}
\]
\end{remk} 

\renewcommand{\arraystretch}{1}

\begin{remk}\label{r:d4}
Theorems \ref{t:main0}-\ref{t:main2} show that very few core-free subgroups of almost simple sporadic groups have even depth, which resonates with Remark \ref{r:sn}(a) on primitive subgroups of $S_n$ and $A_n$. For example, if $G$ is a simple sporadic group and $H$ is maximal, then $d_G(H)$ is even if and only if $(G,H) = ({\rm HS}, {\rm L}_3(4).2)$ or $({\rm Fi}_{24}', 3^{1+10}.{\rm U}_5(2).2)$, with $d_G(H) = 4$ in both cases. Similarly, if $G \ne \mathbb{B}$ is simple and $H$ is non-maximal, then there are just three cases with $d_G(H)$ even. But let us observe that there are infinitely many pairs $(G,H)$, where $G$ is simple and $d_G(H)$ is even. Indeed, if we take $n = d^2+1$, where $d \geqs 3$ is odd, then \cite[Proposition A.5]{BKK} gives $d_G(H) = 2d^2-d+1$ for $G = A_n$ and $H = A_{n-1}$. This leads us naturally to the following question: Does every even integer $n \geqs 4$ arise as the depth of a subgroup of an almost simple group?
\end{remk}

In \cite{BG23}, Burness and Guralnick introduced the concept of a \emph{strongly base-two} group. This is a finite group $G$ with $\a(G) \leqs 1$, where $\a(G)$ is the number of conjugacy classes of core-free subgroups $H$ with $b(G,H) \geqs 3$. Now if $G$ is a (nonabelian) finite simple group, then $\a(G) \geqs 1$ since $G$ has a proper subgroup $H$ with $|H|^2 \geqs |G|$ (see \cite[Theorem 2.1]{BG23}). Moreover, for $G$ simple, \cite[Theorem 2]{BG23} tells us that $\a(G) = 1$ if and only if $G$ is one of the following:
\[
{\rm L}_2(q) \mbox{ with $q \geqs 23$ odd and $(q-1)/2$ prime},\; {}^2B_2(q) \mbox{ with $q-1$ prime}, \; {\rm J}_1, \; \mathbb{M},
\]
where $\mathbb{M}$ is the Monster sporadic group. It is natural to consider the analogous problem with respect to depth, so with this in mind we define $\delta(G)$ to be the number of conjugacy classes of core-free subgroups $H$ of $G$ with $d_G(H) \geqs 4$. 

If $G$ is a simple sporadic group, then by inspecting Table \ref{tab:max} we deduce that $\delta(G) = 0$ if and only if $G = {\rm J}_1$, ${\rm J}_3$, ${\rm J}_4$ or ${\rm Th}$ (see Remark \ref{r:delta}(a) below). Our next result reveals that these are the only simple groups with $\delta(G) = 0$.

\begin{theorem}\label{t:delta}
Let $G$ be a finite simple group. Then $d_G(H) = 3$ for every proper nontrivial subgroup $H$ of $G$ if and only if $G = {\rm J}_1$, ${\rm J}_3$, ${\rm J}_4$ or ${\rm Th}$.
\end{theorem}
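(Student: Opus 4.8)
The plan is to combine Theorem \ref{t:main0} with the character-theoretic description of depth-$3$ inclusions recalled in Section \ref{sss:inc} and the base-size results used elsewhere in the paper.

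I would first record a reduction. By Proposition \ref{p:graph}(vi), if $K \leq H$ with $H$ core-free then $d_G(K) \leq d_G(H)$; as $G$ is simple, every proper subgroup is core-free and every nontrivial proper subgroup is contained in a maximal subgroup. Since $d_G(M) \geq 3$ for every maximal subgroup $M$ by \cite{KK}, it follows that $d_G(H) = 3$ for all nontrivial proper $H$ \emph{if and only if} $d_G(M) = 3$ for every maximal subgroup $M$. The ``only if'' direction of the theorem is then immediate from Theorem \ref{t:main0} and Table \ref{tab:max}, which shows that ${\rm J}_1$, ${\rm J}_3$, ${\rm J}_4$ and ${\rm Th}$ are exactly the simple sporadic groups all of whose maximal subgroups have depth $3$ (Remark \ref{r:delta}(a)). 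So it remains to show that every simple group $G$ not in this list has a subgroup of depth at least $4$.

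The main tool here is the criterion from Section \ref{sss:inc}: in the bipartite graph on ${\rm Irr}(G) \sqcup {\rm Irr}(H)$ joining $\chi$ to $\psi$ whenever $\psi$ is a constituent of $\chi|_H$, one has $d_G(H) \geq 4$ precisely when two vertices $\psi_1, \psi_2 \in {\rm Irr}(H)$ have no common neighbour in ${\rm Irr}(G)$, equivalently when $\langle {\rm Ind}_H^G\psi_1, {\rm Ind}_H^G\psi_2 \rangle = 0$. Taking $\psi_1 = 1_H$ and $\psi_2$ a nontrivial linear character of $H$ and applying Mackey's formula, $\langle {\rm Ind}_H^G 1_H, {\rm Ind}_H^G\psi_2\rangle$ counts the double cosets $HgH \ne H$ with $H \cap {}^gH \leq {}^g\ker\psi_2$, so $d_G(H) \geq 4$ as soon as $H \cap {}^gH \not\leq {}^g\ker\psi_2$ for all $g \notin H$. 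Now I would go through the families of simple groups. For $G = A_n$ with $n \geq 5$, take $H = A_{n-1}$ (maximal), which by Theorem \ref{t:sn2} has depth $2n - 2\lceil\sqrt{n-1}\,\rceil - 1 \geq 5$. For $G$ of Lie type over $\mathbb{F}_q$ with $q \geq 4$, take $H = B$ a Borel subgroup with unipotent radical $U$ and (nontrivial) torus $T = B/U$: here $[B,B] = U$, so a nontrivial linear character $\psi_2$ of $B$ has kernel of the form $U \rtimes K$ with $K < T$, and since $T \leq B \cap {}^{\dot w}B$ for every Weyl-group representative $\dot w$ while $T$ is not contained in the product of a unipotent subgroup with a proper subgroup of $T$, the Bruhat decomposition yields $B \cap {}^{\dot w}B \not\leq {}^{\dot w}\ker\psi_2$ for all $w \ne 1$, so $d_G(B) \geq 4$. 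The rank-one groups ${\rm L}_2(q)$, ${}^2B_2(q)$, ${}^2G_2(q)$ and ${\rm U}_3(q)$ can alternatively be read off from \cite{F1,F2,HHP1,HHP2,JHH}.

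The hard part will be the remaining groups of Lie type: the simple classical and exceptional groups over $\mathbb{F}_2$ (such as ${\rm L}_n(2)$ and ${\rm Sp}_{2m}(2)$), where $T = B/U$ is trivial, together with a few families over $\mathbb{F}_3$ — notably ${\rm PSp}_{2m}(3)$ — where a simple-root character restricts trivially to the torus. In these cases no linear character of the Borel subgroup gives an orthogonal pair of induced characters, so one must instead work with a maximal parabolic subgroup $P = P_1$ (using $b(G,P) \geq 3$ from \cite[Section 3]{HLM} and Remark \ref{r:class}(a), or more generally a subgroup with $|H|^2 \geq |G|$ as supplied by \cite[Theorem 2.1]{BG23}) and verify $\langle {\rm Ind}_P^G 1_P, {\rm Ind}_P^G\psi_2\rangle = 0$ for a suitable — typically non-linear — irreducible character $\psi_2$ of $P$ inflated from its Levi factor, by a direct Mackey computation over the (few) $P$-$P$ double cosets. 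Several small members of these families are most conveniently handled separately, via isomorphisms with alternating groups (for example ${\rm L}_4(2) \cong A_8$) or by direct computation with known character tables as in Section \ref{sss:inc}. I expect the construction of the required characters for the classical families over $\mathbb{F}_2$ and $\mathbb{F}_3$, and the accompanying case division, to be the most technical part of the proof.
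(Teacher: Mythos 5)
Your reduction to maximal subgroups, the sporadic case via Table \ref{tab:max}, and the alternating case via $A_{n-1}$ all match the paper. For the generic Lie-type case you take a genuinely different route: the paper passes to the simply connected cover $S$, takes the trivial character and a character of the torus inflated to the Borel that is trivial on $Z(S)$, and invokes Harish-Chandra theory (\cite[Corollary 3.1.17]{GM}: non-conjugate cuspidal pairs give disjoint series) to get $\la \a^G,\b^G\ra = 0$; you instead run Mackey's formula over the Bruhat double cosets and observe that $T \leqs B \cap {}^{\dot w}B$ while $T \not\leqs {}^{\dot w}(U\ker\theta)$ for a nontrivial $\theta \in {\rm Irr}(T)$. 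Your computation is sound and arguably more elementary, though you should say explicitly that you need $\theta$ trivial on the centre so that $\psi_2$ descends to the simple group; the correct dividing line is whether $T/Z$ is nontrivial, and this fails only for the untwisted groups over $\mathbb{F}_2$ (namely ${\rm L}_n(2)$, ${\rm Sp}_n(2)$, $\O_n^{+}(2)$, $F_4(2)$, $E_r(2)$), plus a few small groups like ${\rm Sp}_4(2)'$, $G_2(2)'$, ${}^2G_2(3)'$ and ${}^2F_4(2)'$ handled directly. Your worry about families over $\mathbb{F}_3$ such as ${\rm PSp}_{2m}(3)$ is unfounded: there $|T| = 2^m$ and $|Z|=2$, so $T/Z \ne 1$ and the Borel argument applies; restricting to $q \geqs 4$ just inflates the list of residual cases unnecessarily.

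The genuine gap is in those residual cases over $\mathbb{F}_2$, which you explicitly defer: you propose a Mackey computation over $P$-$P$ double cosets with some unspecified non-linear character of $P_1$, but you do not produce the character or the computation, and this is precisely where the work lies. The paper's resolution is short and worth knowing: take $H = P_1 = QL$, let $\a$ be trivial and let $\b$ be the inflation to $H$ of the Steinberg character of the Levi factor $L$; then $\la \a^G,\b^G\ra = \la (\a^G)_H,\b\ra$, which vanishes whenever $(\a^G)(1) = |G:H| < \b(1) = |L|_2$, an inequality that holds for all but four small classical groups (checked by computer) and for $E_6(2)$, $E_7(2)$, $E_8(2)$ with $P_r$ in place of $P_1$; the case $F_4(2)$ is done from its stored character tables. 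Without this (or an equivalent) concrete argument, your proof of the "only if" direction is incomplete for exactly these groups.
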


\begin{remk}\label{r:delta}
Let $G$ be a simple group and let $H$ be a proper nontrivial subgroup. 

\begin{itemize}\addtolength{\itemsep}{0.2\baselineskip}
\item[{\rm (a)}] Since $d_G(K) \leqs d_G(H)$ for every subgroup $K$ of $H$ (see Proposition \ref{p:graph}(vi)), it follows that $\delta(G) = 0$ if and only if $d_G(H) = 3$ for every maximal subgroup $H$ of $G$.

\item[{\rm (b)}] In Section \ref{sss:kuls} we define a graph $\mathcal{K}(G,H)$, where the vertices are labelled by the irreducible (complex) characters of $H$, with $\a$ and $\b$ adjacent if and only if the induced characters $\a^G$ and $\b^G$ have a common irreducible constituent. Then $d_G(H) = 3$ if and only if the graph $\mathcal{K}(G,H)$ is complete (see Proposition \ref{p:graph}(iv)), so we can view Theorem \ref{t:delta} as a classification of the finite simple groups $G$ with the property that $\mathcal{K}(G,H)$ is complete for every proper nontrivial subgroup $H$ of $G$.
\end{itemize}
\end{remk}
 
Next we turn to the depth of nilpotent and solvable subgroups of almost simple groups. Our main result for nilpotent subgroups is the following. Here parts (i) and (ii) follow from earlier work of Zenkov \cite{Zenkov14,Zen20}, while part (iii) is an immediate consequence of a recent theorem of Burness and Huang \cite[Corollary B]{BH1}, which in turn resolves a conjecture of Vdovin from 2002 (see Problem 15.40 in the \emph{Kourovka Notebook} \cite{Kou}).

\begin{theorem}\label{t:nilp}
Let $G$ be an almost simple group with socle $G_0$ and let $H$ be a nontrivial nilpotent subgroup of $G$. 
\begin{itemize}\addtolength{\itemsep}{0.2\baselineskip}
\item[{\rm (i)}] We have $d_G(H) \leqs 5$, with equality if $G = S_8$ and $H$ is a Sylow $2$-subgroup of $G$.
\item[{\rm (ii)}] If $G_0$ is an alternating or sporadic group, then
\[
d_G(H) = \left\{\begin{array}{ll}
5 & \mbox{if $G = S_8$ and $H$ is a Sylow $2$-subgroup} \\
3 & \mbox{otherwise.}
\end{array}\right.
\]
\item[{\rm (iii)}] If $G = G_0$ is simple, then $d_G(H) = 3$.
\end{itemize}
\end{theorem}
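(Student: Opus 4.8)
The plan is to derive the result from the bound \eqref{e:base} together with the known estimates for the base sizes of nilpotent subgroups of almost simple groups. First I would observe that $H$ is core-free: the core ${\rm Core}_G(H)$ is a normal subgroup of $G$ contained in $H$, so it is either trivial or it contains the socle $G_0$ (using $C_G(G_0) = 1$), and the latter is impossible since $G_0$ is a nonabelian simple group while $H$ is nilpotent. Hence $H$ is a nontrivial core-free subgroup, so in particular $H$ is proper and $d_G(H) \geq 3$ by \cite[Corollary 3.2]{KK}. Recall also that \eqref{e:base} gives $d_G(H) \leq 2b(G,H) - 1$, so $b(G,H) = 2$ forces $d_G(H) = 3$; everything therefore reduces to controlling $b(G,H)$.

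For part (iii) I would invoke \cite[Corollary B]{BH1}, which settles Vdovin's conjecture and shows that $b(G,H) \leq 2$ for every nilpotent subgroup $H$ of a finite simple group $G$. Since $H$ is nontrivial, this gives $b(G,H) = 2$ and hence $d_G(H) = 3$. For parts (i) and (ii) I would appeal instead to Zenkov's work \cite{Zenkov14,Zen20} on intersections of nilpotent subgroups, which yields $b(G,H) \leq 3$ for every nilpotent subgroup $H$ of an almost simple group $G$; then \eqref{e:base} gives $d_G(H) \leq 5$, which is part (i). When $G_0$ is an alternating or sporadic group, Zenkov's results show further that $b(G,H) = 2$ for every nilpotent $H$ apart from the single pair $(G,H) = (S_8, P)$ with $P$ a Sylow $2$-subgroup of $G$, and in all of the cases with $b(G,H) = 2$ we get $d_G(H) = 3$ as above. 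For the remaining pair $(S_8, P)$ we have $d_G(H) \leq 5$ from $b(G,H) = 3$; here $|S_8| = 40320$ is small, so the character tables of $S_8$ and $P$ are readily available and I would compute $d_G(H)$ directly, using the character-theoretic criterion recalled in Section \ref{sss:inc}, to obtain $d_G(H) = 5$. This last computation also shows that the bound in part (i) is attained.

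There is essentially no serious obstacle here beyond quoting the two base-size inputs, whose proofs contain all the real content: Zenkov's combinatorial analysis of conjugates of nilpotent subgroups, and the Burness and Huang verification of Vdovin's conjecture by means of fixed point ratio estimates. The only hands-on step is the finite computation for $(S_8, P)$, and it is entirely routine. If one wanted a computer-free proof of the lower bound $d_G(H) \geq 4$ in that case, the natural route would be to exhibit two irreducible characters of $P$ whose inductions to $S_8$ share no common irreducible constituent, so that the graph $\mathcal{K}(G,H)$ of Remark \ref{r:delta}(b) fails to be complete and hence $d_G(H) > 3$ by Proposition \ref{p:graph}(iv); ruling out $d_G(H) = 4$ would then require examining the analogous bimodule condition one level up.
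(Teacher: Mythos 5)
Your proposal is correct and follows essentially the same route as the paper: the uniform bound $b(G,H)\leqs 3$ for nilpotent subgroups gives part (i) via \eqref{e:base}, Zenkov's classification for alternating and sporadic socles gives part (ii), \cite[Corollary B]{BH1} gives part (iii), and the pair $(S_8,P)$ is settled by a direct check that $b(G,H)=3$ and $d_G(H)=5$. The only adjustment is a citation detail: the general bound in part (i) needs Zenkov's later result \cite{Zen21} (valid for any finite group with trivial Fitting subgroup), since \cite{Zenkov14,Zen20} only treat alternating and sporadic socles and so do not cover the Lie type cases of part (i).
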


\begin{remk}\label{r:nilp}
We record a couple of remarks on the statement of Theorem \ref{t:nilp}.
\begin{itemize}\addtolength{\itemsep}{0.2\baselineskip}
\item[{\rm (a)}] The upper bound in part (i) of Theorem \ref{t:nilp} is a special case of a more general result due to Zenkov. Indeed, the main theorem of \cite{Zen21} implies that if $G$ is a finite group with trivial Fitting subgroup and $H$ is a nilpotent subgroup of $G$, then $b(G,H) \leqs 3$ and thus $d_G(H) \leqs 5$ via \eqref{e:base}.

\item[{\rm (b)}] It remains an open problem to determine $b(G,H)$ for every nilpotent subgroup $H$ of every almost simple group $G$ of Lie type. There are partial results for low rank groups, and there is a complete answer in the special case where $H$ is a Sylow $p$-subgroup (see the main theorem of \cite{BH2}, which extends earlier work of Mazurov and Zenkov \cite{Z_92p,ZM} from the 1990s). And if $G$ is simple, then \cite[Corollary B]{BH1} states that $b(G,H) = 2$ for every nontrivial nilpotent subgroup $H$.

\item[{\rm (c)}] The main theorem of \cite{ZM} states that if $G$ is simple and $H$ is a Sylow $p$-subgroup, where $p$ is a prime divisor of $|G|$, then $b(G,H) = 2$ and thus $d_G(H) = 3$. As originally noted by Zenkov \cite{Z_92p}, this does not extend to almost simple groups and a complete classification of the pairs $(G,H)$ with $b(G,H) = 3$ has very recently been obtained in \cite{BH2}. For example, if $G = {\rm PGL}_2(q)$ and $H = D_{2(q+1)}$, where $q$ is a Mersenne prime, then $H$ is a Sylow $2$-subgroup and $b(G,H) = 3$, which means that $d_G(H) \leqs 5$. In this case, we can  take $\a,\b \in {\rm Irr}(H)$, where $\a$ is trivial and $\b$ is the nontrivial linear character of $H$ inflated from 
$H/N$, where $N$ is the cyclic subgroup of order $q+1$. Then it is straightforward to show that $\la \a^G , \b^G \ra = 0$, where $\a^G$ and $\b^G$ are the induced characters and $\la \; , \, \ra$ is the usual inner product for characters of $G$. Therefore, Proposition \ref{p:graph}(v) implies that $d_G(H) = 5$. It remains an open problem to determine the exact depth of every Sylow subgroup of every almost simple group (see \cite[Remark 7]{BH2} for further discussion).
\end{itemize}
\end{remk}

Finally, our main result for solvable subgroups is the following (here $P_2$ denotes the stabilizer in $G$ of a $2$-dimensional subspace of the natural module for ${\rm L}_4(3)$).

\begin{theorem}\label{t:solv}
Let $G$ be an almost simple group and let $H$ be a solvable maximal subgroup of $G$. Then $d_G(H) \leqs 9$, with equality if and only if $G = {\rm L}_4(3).2_2$ or ${\rm L}_4(3).2^2$ and $H = P_2$.
\end{theorem}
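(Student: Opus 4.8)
The plan is to reduce the problem to a finite check over the solvable maximal subgroups of almost simple groups, using the inequality $d_G(H) \leqs 2b(G,H)-1$ from \eqref{e:base} together with the classification of solvable maximal subgroups that admit large bases. First I would invoke the fact that if $H$ is a solvable maximal subgroup of an almost simple group $G$, then $b(G,H)$ is bounded by a small absolute constant except in a short explicit list of cases. For non-standard primitive actions this is immediate from Theorem~\ref{t:ns}, which already gives $d_G(H) \leqs 11$ (and $\leqs 7$ when $G_0$ is classical); the point is that a solvable maximal subgroup forces strong restrictions, so in the non-standard regime one in fact has $b(G,H) \leqs 3$ and hence $d_G(H) \leqs 5$. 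For the standard cases — $G_0 = A_n$ with $H$ intransitive or imprimitive, and $G_0$ classical with $H$ reducible — solvability of $H$ severely limits the possibilities: an intransitive $S_k \times S_{n-k}$ is solvable only for very small $k$, an imprimitive $S_a \wr S_b$ only for small $a,b$, and a reducible subgroup of a classical group is solvable only in bounded rank. So after this reduction only finitely many pairs $(G,H)$ survive with $b(G,H)$ potentially exceeding $5$, and these can be enumerated directly from the literature on solvable maximal subgroups (e.g. the work of Li–Zhang and the subspace-action base-size results of \cite{HLM}).

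Next I would handle the surviving finite list computationally, using the character-theoretic criterion described in Section~\ref{sss:inc}: here one constructs the character tables of $G$ and $H$ and computes the minimal $n$ for which the relevant bimodule condition holds, equivalently one analyses the graph $\mathcal{K}(G,H)$ and its higher analogues. In almost all of these remaining pairs $G$ has a manageable permutation representation on $G/H$, so the character tables are accessible and $d_G(H)$ can be read off exactly. The outcome of this computation is that $d_G(H) < 9$ in every case except the two listed — $G = {\rm L}_4(3).2_2$ or ${\rm L}_4(3).2^2$ with $H = P_2$, the stabilizer of a $2$-space in the natural module for ${\rm L}_4(3)$ — where one checks $b(G,H) = 5$ (this is a subspace stabilizer, so it falls under the standard/classical analysis) and then verifies directly that $d_G(H) = 9$, meeting the bound in \eqref{e:base} with equality.

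The subspace stabilizer $P_2 < {\rm L}_4(3)$ is solvable because $P_2$ has shape $3^4{:}({\rm GL}_2(3) \times {\rm GL}_2(3))/\{\pm1\}$-type with a solvable Levi factor (as $|{\rm GL}_2(3)| = 48$), and $b({\rm L}_4(3),P_2) = 5$ follows from the general formula $b({\rm PGL}_n(q), P_k) = \lceil n/k \rceil + 1$ for $q = 3$ with $n = 4$, $k = 2$; the extension to ${\rm L}_4(3).2_2$ and ${\rm L}_4(3).2^2$ requires checking that the base size is unchanged, which one confirms computationally. Then $d_G(H) \leqs 2 \cdot 5 - 1 = 9$, and the computation of $\mathcal{K}(G,H)$ (via the permutation character of degree $|G:P_2| = 1120$ for ${\rm L}_4(3)$ and its extensions) shows equality.

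The main obstacle I expect is not any single hard theorem but the reduction step: one must be confident that no solvable maximal subgroup outside the expected list has base size exceeding the threshold forcing $d_G(H) \geqs 9$. This requires a careful sweep through the known structure of solvable maximal subgroups of almost simple groups — in particular the solvable maximal parabolic and subspace-stabilizer subgroups of low-rank classical groups, where base sizes can grow — to be sure the only such subgroup with $b(G,H) = 5$ (and none with $b(G,H) \geqs 6$) giving $d_G(H) = 9$ is $P_2 < {\rm L}_4(3)$ and its almost simple extensions. Once that enumeration is pinned down, the remaining work is the finite, routine (if laborious) character computation.
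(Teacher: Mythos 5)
Your high-level strategy (bound $d_G(H)$ via \eqref{e:base}, then compute the survivors by character theory) matches the paper's, but the crucial reduction step is left open, and the partial arguments you offer for it are incorrect. The paper closes this step by quoting the main theorem of \cite{B21}, which gives $b(G,H) \leqs 5$ for \emph{every} solvable maximal subgroup of \emph{every} almost simple group, together with \cite[Theorem 2(i)]{B21}, which shows that $b(G,H)=5$ occurs only for $(S_8, S_4\wr S_2)$, $(G_0,H)=({\rm L}_4(3),P_2)$ and $(G_0,H)=({\rm U}_5(2),P_1)$; this reduces the whole theorem to three explicit computations. You do not cite this result, and your proposed substitute does not work as stated. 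First, your claim that a solvable maximal subgroup in the non-standard regime automatically has $b(G,H)\leqs 3$, hence $d_G(H)\leqs 5$, is false: $G={\rm U}_4(2)$ with $H=3^3.S_4$ of type ${\rm GU}_1(2)\wr S_4$ is a non-standard solvable maximal subgroup with $d_G(H)=7$ (see Remark \ref{r:ns}(c) and Table \ref{tab:solv}), which forces $b(G,H)\geqs 4$. Second, for $G_0$ an exceptional group of Lie type, Theorem \ref{t:ns}(iv) gives only $d_G(H)\leqs 11$, so without a uniform base-size bound for solvable stabilizers your argument does not even establish $d_G(H)\leqs 9$ in that case. Third, the surviving ``standard'' cases are not finite in number: the Borel subgroups $P_1$ of ${\rm L}_2(q)$ form an infinite family of solvable maximal reducible subgroups (bounded rank, unbounded $q$), and disposing of them requires a uniform bound such as $b(G,H)\leqs 4$, not a finiteness argument.

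There are also two factual slips in your treatment of the extremal case: the formula $b({\rm PGL}_n(q),P_k)=\lceil n/k\rceil+1$ would give $3$, not $5$, for $(n,k)=(4,2)$ (the correct value $b=5$ is part of the classification in \cite[Theorem 2(i)]{B21}), and $|{\rm L}_4(3):P_2|$ equals the number of $2$-spaces of $\mathbb{F}_3^4$, namely $130$, not $1120$. Neither slip is fatal to the overall strategy, but together with the missing reduction they mean the proposal does not yet constitute a proof.
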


\begin{remk}\label{r:solv}
Some comments on the statement of Theorem \ref{t:solv} are in order.
\begin{itemize}\addtolength{\itemsep}{0.2\baselineskip}
\item[{\rm (a)}] The bound $d_G(H) \leqs 9$ is an immediate consequence of the main result of \cite{B21}, which gives $b(G,H) \leqs 5$ for all $G$ and $H$ as in Theorem \ref{t:solv}. By carefully inspecting the cases with $b(G,H) = 5$, it is straightforward to show that either $d_G(H) \leqs 7$ or one of the following holds:

\vspace{2mm}

\begin{itemize}\addtolength{\itemsep}{0.2\baselineskip}
\item $G = {\rm U}_5(2)$, $H = P_1$ is the stabilizer of a $1$-dimensional totally isotropic subspace of the natural module for $G$ and we compute $d_G(H) = 8$; or

\item $G = {\rm L}_4(3).2_2$ or ${\rm Aut}({\rm L}_4(3)) = {\rm L}_4(3).2^2$, $H = P_2$ and $d_G(H) = 9$. Here ${\rm L}_4(3).2_2$ can be identified among the almost simple groups of the form ${\rm L}_4(3).2$ by the fact that it contains an involutory graph automorphism $x$ with $C_{G_0}(x) = {\rm PGSp}_4(3)$, where $G_0 = {\rm L}_4(3)$.
\end{itemize}

\vspace{1mm}

We refer the reader to Theorem \ref{t:solv2} for a more refined statement concerning the almost simple groups $G$ with a solvable maximal subgroup $H$ such that $d_G(H) > 5$.

\item[{\rm (b)}] If $G$ is an almost simple sporadic group and $H$ is \emph{any} solvable subgroup of $G$, then the main theorem of \cite{B23} gives $b(G,H) \leqs 3$ and thus $d_G(H) \leqs 5$. Note that equality is possible. For example, by inspecting Table \ref{tab:max} we see that $d_G(H) = 5$ when $G = {\rm M}_{11}$ and $H = {\rm U}_3(2){:}2$ is a solvable maximal subgroup.

\item[{\rm (c)}] Similarly, if $G = S_n$ or $A_n$ with $n \geqs 5$, then a theorem of Baykalov \cite{Bay} gives $b(G,H) \leqs 5$ for \emph{every} solvable subgroup $H$ of $G$, in which case $d_G(H) \leqs 9$. In this setting, \cite[Conjecture 2]{AB} asserts that $b(G,H) = 2$ for all $n \geqs N$, where $N$ is an unspecified constant. If this is true, then it would imply that every nontrivial solvable subgroup of $S_n$ or $A_n$ has depth $3$ if $n$ is sufficiently large. Moreover, in \cite{AB} the authors speculate that $N=21$ may be the optimal constant, noting that $b(G,H) = 3$ for $G = S_{20}$ and $H = (S_4 \wr S_4) \times S_4$. With the aid of {\sc Magma} \cite{magma}, we can show that $d_G(H) = 5$ in the latter case, which suggests that $n \geqs 21$ may also be optimal for depth-three.
\end{itemize} 
\end{remk}

One of the main motivations for the work in \cite{Bay,B23} is a conjecture of Vdovin  (stated as Problem 17.41(b) in \cite{Kou}). This asserts that if $G$ is a finite group with trivial solvable radical, then $b(G,H) \leqs 5$ for every solvable subgroup $H$ of $G$. There is a reduction of this conjecture to almost simple groups \cite{Vdovin} and the results in \cite{Bay,B23} reduce the problem further to groups of Lie type. For classical groups, there is ongoing work of Baykalov towards a proof of the conjecture, including recent papers \cite{Bay2,Bay3} where he reduces the problem to the orthogonal groups. The conjecture in the special case where $H$ is a solvable maximal subgroup of $G$ is established in \cite{B21}.

This earlier work leads us naturally to the following conjecture on the depth of solvable subgroups of finite groups. Note that in view of Theorem \ref{t:solv} above, the asserted bound on $d_G(H)$ would be best possible. And by the main theorem of \cite{B21}, we know that the conclusion holds whenever $H$ is a solvable \emph{maximal} subgroup of $G$.

\begin{conj}\label{c:vdov}
Let $G$ be a finite group with trivial solvable radical. Then $d_G(H) \leqs 9$ for every solvable subgroup $H$ of $G$.
\end{conj}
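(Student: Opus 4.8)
\medskip

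\noindent\textbf{Proof proposal.} The strategy is to deduce Conjecture \ref{c:vdov} from the base-size conjecture of Vdovin (Problem 17.41(b) in \cite{Kou}). Suppose that conjecture holds, so that $b(G,H) \leqs 5$ for every solvable subgroup $H$ of every finite group $G$ with trivial solvable radical. If $H$ is such a subgroup, then any normal subgroup of $G$ contained in $H$ is solvable and normal in $G$, hence trivial, so $H$ is core-free; thus \eqref{e:base} applies and gives $d_G(H) \leqs 2b(G,H) - 1 \leqs 9$. The numerology is exact, $2\cdot 5 - 1 = 9$, so no slack is lost, and the bound cannot be improved in general by Theorem \ref{t:solv}. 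Hence it suffices to establish (the open cases of) Vdovin's conjecture. By the reduction in \cite{Vdovin} we may assume $G$ is almost simple; by \cite{Bay} and \cite{B23} we may assume the socle $G_0$ is a group of Lie type; and the case where $H$ is a \emph{maximal} solvable subgroup is already settled in \cite{B21}. So the remaining work concerns an arbitrary solvable subgroup $H$ of an almost simple group of Lie type.

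For such $G$ I would use the probabilistic method described in Section \ref{sss:bases}: setting $\widehat{Q}(G,c) = \sum_{x} |x^G \cap H|^c/|x^G|^{c-1}$, where $x$ runs over representatives of the $G$-classes of elements of prime order, one has $b(G,H) \leqs c$ whenever $\widehat{Q}(G,c) < 1$, so the goal is $\widehat{Q}(G,5) < 1$. A crucial point is that one cannot simply pass to a maximal overgroup $M \geqs H$ and invoke $b(G,H) \leqs b(G,M)$, since a solvable subgroup need not lie in a solvable maximal subgroup, and if $M$ is a \emph{standard} maximal subgroup then $b(G,M)$ may be arbitrarily large; so the estimates genuinely have to be carried out for $H$ itself. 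The two ingredients are then: (a) fixed point ratio bounds ${\rm fpr}(x,G/M) \leqs |x^G|^{-s_M}$ for the maximal overgroups $M$ of $H$, available from the work on Cameron's conjecture when $M$ is non-standard and requiring the refinements of \cite{B21} and \cite{HLM} when $M$ is standard; and (b) control of $|x^G \cap H|$ and of the number of prime-order classes meeting $H$, using the structure of solvable subgroups of groups of Lie type --- parabolic subgroups with solvable Levi factor, normalizers of maximal tori, stabilizers of totally singular or non-degenerate decompositions into small-dimensional subspaces, subgroups of type ${\rm GU}_1(q)\wr S_n$, and the like. For all but finitely many $(G,H)$ these should force $\widehat{Q}(G,5) < 1$, with the residual small cases handled by direct computation in {\sc Magma} \cite{magma}, exactly as in \cite{B21,B23}.

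The main obstacle will be the classical groups, and within them the orthogonal groups: here the solvable subgroups that are ``large'' relative to $G$ (maximal parabolics with abelian unipotent radical, imprimitive subgroups preserving orthogonal decompositions into small subspaces, and their solvable pieces) are precisely the ones for which the fixed point ratio estimates are tightest, and pushing the argument through requires the detailed case analysis that is the subject of Baykalov's ongoing work \cite{Bay2,Bay3} reducing Vdovin's conjecture for classical groups to the orthogonal case; a complete proof of Conjecture \ref{c:vdov} therefore has to await the completion of that programme, together with the treatment of the exceptional groups along the lines of the probabilistic arguments in \cite{BLS}. Finally, should it turn out --- contrary to expectation --- that some solvable $H$ has $b(G,H) \geqs 6$, one could still hope to prove $d_G(H) \leqs 9$ directly by means of the character-theoretic criterion of Section \ref{sss:inc}, showing that the induced-character graph $\mathcal{K}(G,H)$ defined in Section \ref{sss:kuls} has small diameter; but we do not expect this case to occur.
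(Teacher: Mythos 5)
You have not proved the statement, and indeed the paper does not prove it either: it is stated as an open conjecture, with only the remarks that the bound would be sharp by Theorem \ref{t:solv} and that the maximal case is settled by \cite{B21}. Your opening reduction is correct and matches the paper's own reasoning: a solvable subgroup of a group with trivial solvable radical is automatically core-free, so \eqref{e:base} converts Vdovin's conjectured bound $b(G,H)\leqs 5$ (Problem 17.41(b) in \cite{Kou}) into $d_G(H)\leqs 2\cdot 5-1=9$, and the chain of reductions via \cite{Vdovin}, \cite{Bay}, \cite{B23} and \cite{B21} to non-maximal solvable subgroups of almost simple groups of Lie type is accurately described. But this is a conditional argument: everything rests on a base-size statement that is itself open, and your own text concedes that the classical (especially orthogonal) groups require the completion of Baykalov's programme \cite{Bay2,Bay3} and that the exceptional groups still need a separate treatment. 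A sketch of a research programme, however well-informed, is not a proof, and the genuinely hard content --- establishing $\widehat{Q}(G,5)<1$, or any other route to $b(G,H)\leqs 5$, for an arbitrary solvable subgroup $H$ of an arbitrary almost simple group of Lie type --- is exactly the part you have not supplied.

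Two further points are worth flagging. First, your closing remark correctly identifies the real vulnerability of the approach: the conjecture as stated concerns depth, not base size, so even a single solvable $H$ with $b(G,H)=6$ would not refute it but would break the derivation via \eqref{e:base}, forcing one to bound the diameter of $\mathcal{K}(G,H)$ directly; you offer no method for doing so. Second, your observation that one cannot reduce to a maximal overgroup (since solvable subgroups need not sit inside solvable maximal subgroups, and standard overgroups can have unbounded base size) is correct and important --- it is precisely why the non-maximal case does not follow from \cite{B21} --- but it also means your proposed fixed-point-ratio estimates for the overgroups $M$ of $H$ cannot be used as stated; one needs bounds on $|x^G\cap H|$ itself, which is where the unfinished case analysis lives.
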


\begin{remk}\label{r:q51}
To conclude the introduction, we take the opportunity to resolve a related open problem. Let $G$ be a finite group and let $H$ be a subgroup of $G$ with $d_G(H) = 3$. In this situation, \cite[Question 5.1]{JHH} asks if it is always possible to find elements $x,y \in G$ such that  
\[
{\rm Core}_G(H) = \bigcap_{g \in G} H^g = H \cap H^{x} \cap H^{y},
\] 
which would imply that $b(G,H) \leqs 3$ if $H$ is core-free. With the aid of {\sc Magma} \cite{magma}, it is straightforward to check that this assertion is false. Indeed, there exists a group $G$ of order $2^7$ with a core-free subgroup $H$ of order $8$ such that $d_G(H) = 3$ and $b(G,H) = 4$. Specifically, we can take $G$ to be the group \texttt{SmallGroup(128,560)} in the {\sc Magma} database of small groups and $H$ to be any core-free elementary abelian subgroup of order $8$.
\end{remk}

\noindent \textbf{Acknowledgements.} It is a pleasure to thank Thomas Breuer and Gunter Malle for helpful discussions during the preparation of this paper. In particular, I thank Dr Breuer for his assistance with some of the character table calculations for sporadic groups in Section \ref{s:sporadic}. I also grateful to an anonymous referee for their careful reading of the paper and several helpful comments and suggestions.

\section{Preliminaries}\label{s:prel}

In this section, we present some preliminary results that we will need in the proofs of our main results. In particular, in Section \ref{ss:comp} we briefly discuss some of the main computational methods we use in this paper.

\subsection{Depth}\label{ss:depth}

Let $G$ be a finite group with a subgroup $H$ and consider the inclusion of group algebras $FH \subseteq FG$, where $F$ is a field. For a positive integer $m$, it will be convenient to write $(FG)^m$ for the tensor product
\[
FG \otimes_{FH} \cdots \otimes_{FH} FG \;\; \mbox{(with $m$ factors).}
\]
Following Boltje, Danz and K\"{u}lshammer \cite{BDK}, the inclusion $FH \subseteq FG$ has depth $2n$ for some positive integer $n$ if $(FG)^{n+1}$ is isomorphic as an $(FG,FH)$-bimodule to a direct summand of $\bigoplus_{i=1}^k (FG)^n$ for some $k \geqs 1$. And the inclusion has depth $2n+1$ if the same conclusion holds with respect to $(FH,FH)$-bimodules. Finally, $FH$ has depth $1$ in $FG$ if $FG$ is isomorphic to a direct summand of $\bigoplus_{i=1}^k FH$ as an $(FH,FH)$-bimodule for some $k \geqs 1$. 

\begin{defn}\label{d:depth}
The \emph{depth} of $H$ in $G$, denoted $d_G(H)$, is defined to be the minimal depth of the inclusion of complex group algebras $\mathbb{C}H \subseteq \mathbb{C}G$.
\end{defn}

As we recalled in Section \ref{s:intro}, this notion is well-defined in the sense that every inclusion $FH \subseteq FG$ has depth $n$ for some positive integer $n$ (see \cite[Theorem 4.1]{BDK}). In addition, as noted in \cite[Remark 4.5(a)]{BDK}, the definition is unchanged if we replace $\mathbb{C}$ by any field of characteristic zero.

\begin{prop}\label{p:normal}
We have $d_G(H) \leqs 2$ if and only if $H$ is normal in $G$, with equality if and only if $G \ne HC_G(x)$ for some $x \in H$.
\end{prop}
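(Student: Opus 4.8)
The plan is to combine two facts already recalled in the introduction: the theorem of Kadison and Külshammer \cite[Corollary 3.2]{KK}, which says $d_G(H) \leqs 2$ if and only if $H \normeq G$, and the theorem of Boltje and Külshammer \cite[Theorem 1.7]{BK}, which characterizes $d_G(H) = 1$ by the condition $G = HC_G(x)$ for every $x \in H$. So the first part of the statement is immediate. For the second part, observe that once $H$ is normal in $G$ we have $d_G(H) \in \{1, 2\}$, and hence $d_G(H) = 2$ exactly when $d_G(H) \neq 1$, i.e.\ exactly when $G \ne HC_G(x)$ for some $x \in H$. That is precisely the asserted equivalence.

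In a little more detail, I would argue as follows. First I would invoke \cite[Theorem 4.1]{BDK} to note that $d_G(H)$ is a well-defined positive integer, and record the monotonicity observation from the definition: if the inclusion has depth $n$ then it has depth $n+1$. Then, for the forward direction of the first equivalence, I would cite \cite[Corollary 3.2]{KK} directly; for the reverse, if $H \normeq G$ one checks from the definition that the depth-$2$ condition holds (this is the content of the same corollary, or can be verified by hand since $(\mathbb{C}G)^2 \cong \mathbb{C}G \otimes_{\mathbb{C}H} \mathbb{C}G$ decomposes compatibly when $H$ is normal). Having established $H \normeq G \iff d_G(H) \leqs 2$, I would then apply \cite[Theorem 1.7]{BK}: $d_G(H) = 1 \iff G = HC_G(x)$ for all $x \in H$. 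Since any normal $H$ has depth $1$ or $2$, negating the Boltje--Külshammer condition gives $d_G(H) = 2 \iff H \normeq G$ and $G \ne HC_G(x)$ for some $x \in H$, which is the claim.

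I do not anticipate a serious obstacle, since both component results are quoted verbatim in the introduction and can be used as black boxes; the only mild subtlety is making sure the logical bookkeeping is airtight — in particular, noting that the Boltje--Külshammer criterion is being applied within the already-established regime $H \normeq G$, so that "not depth $1$" forces "depth exactly $2$" rather than something larger. If one wanted the proof to be more self-contained, the part requiring the most care would be verifying the depth-$2$ containment of bimodules directly from the definition when $H$ is normal, but since \cite[Corollary 3.2]{KK} already supplies this, I would simply cite it.
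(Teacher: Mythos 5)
Your proposal is correct and follows exactly the paper's proof, which simply combines \cite[Corollary 3.2]{KK} with \cite[Theorem 1.7]{BK}. The extra logical bookkeeping you supply (applying the Boltje--K\"{u}lshammer criterion within the regime $H \normeq G$ so that failure of depth $1$ forces depth exactly $2$) is exactly the implicit content of the paper's one-line argument.
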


\begin{proof}
This follows by combining \cite[Corollary 3.2]{KK} with \cite[Theorem 1.7]{BK}.
\end{proof}

In this paper, we are primarily interested in the case where $H$ is a nontrivial core-free subgroup of $G$, in which case $d_G(H) \geqs 3$ by Proposition \ref{p:normal}. In this setting, there are several different ways to compute (or bound) the depth of $H$ and here we briefly recall some of the main methods.

\subsubsection{The inclusion matrix}\label{sss:inc}

The first approach relies on the so-called \emph{inclusion matrix} corresponding to the embedding of $H$ in $G$. In order to give the definition, let 
\[
{\rm Irr}(G) = \{ \chi_1, \ldots, \chi_s\},\;\; {\rm Irr}(H) = \{ \psi_1, \ldots, \psi_r\}
\]
be the irreducible (complex) characters of $G$ and $H$, respectively, and let $M = (m_{ij})$ be the corresponding \emph{inclusion matrix}. This is the $r \times s$ matrix with $m_{ij} = \la \psi_i^G,\chi_j \ra \geqs 0$, where $\psi_i^G$ denotes the character of $G$ induced from $\psi_i$, and $\la \; , \, \ra$ is the usual scalar product on the space of complex class functions on $G$. Set $M^{(1)} = M$ and for each positive integer $\ell$ define 
\begin{equation}\label{e:Mdef}
M^{(2\ell)} = M^{(2\ell-1)}M^{T} \mbox{ and } M^{(2\ell+1)} = M^{(2\ell)}M,
\end{equation}
where $M^T$ is the transpose of $M$. For two integer matrices $A = (a_{ij})$ and $B = (b_{ij})$ of the same size, let us write $A \leqs B$ if $a_{ij} \leqs b_{ij}$ for all $i,j$ (and similarly $A<B$ if $a_{ij} < b_{ij}$ for all $i,j$). In particular, we write $A>0$ if $a_{ij} > 0$ for all $i,j$.

The following result is a key tool for calculating the depth of $H$.

\begin{prop}\label{p:inc}
Let $H$ be a non-normal subgroup of $G$. 
\begin{itemize}\addtolength{\itemsep}{0.2\baselineskip}
\item[{\rm (i)}] The depth $d_G(H)$ is the smallest integer $d$ such that $M^{(d+1)} \leqs eM^{(d-1)}$ for some positive integer $e$.
\item[{\rm (ii)}] If $H$ is core-free, then $d_G(H)$ is the smallest integer $d$ such that $M^{(d-1)} > 0$.
\end{itemize}
\end{prop}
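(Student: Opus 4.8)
The plan is to translate the definition of depth into the language of bimodule decompositions over $\mathbb{C}G$ and $\mathbb{C}H$, and then encode that combinatorially via the inclusion matrix. First I would recall that $\mathbb{C}G$, $\mathbb{C}H$ and the tensor products $(\mathbb{C}G)^m$ are all semisimple (characteristic zero), so every bimodule statement reduces to comparing multiplicities of simple bimodule constituents. The key observation is that the simple $(\mathbb{C}G,\mathbb{C}G)$-bimodules are the $\chi_i \otimes \chi_j^*$, the simple $(\mathbb{C}H,\mathbb{C}H)$-bimodules are the $\psi_i \otimes \psi_j^*$, and the simple $(\mathbb{C}G,\mathbb{C}H)$-bimodules are the $\chi_j \otimes \psi_i^*$; under this identification $\mathbb{C}G \otimes_{\mathbb{C}H} \mathbb{C}G$ has the multiplicity of $\chi_j \otimes \chi_{j'}^*$ equal to $\sum_i m_{ij}m_{ij'} = (M^TM)_{jj'}$, and more generally the matrix $M^{(d)}$ records exactly the multiplicities of the simple constituents of $(\mathbb{C}G)^{(d+1)/2}$ (for $d$ odd, an $(H,H)$-bimodule indexed by pairs of $\psi$'s) or $(\mathbb{C}G)^{d/2}$ (for $d$ even, a $(G,G)$-bimodule indexed by pairs of $\chi$'s), alternating the "direction" of the bimodule. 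This is the content of the recursion \eqref{e:Mdef}, and I would cite or reprove the relevant lemma from \cite{BDK} or \cite{BKK} establishing that $M^{(d)}$ is the multiplicity matrix of $(\mathbb{C}G)^{\lceil (d+1)/2\rceil}$ as the appropriate bimodule.

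Given this dictionary, part (i) is almost immediate: the inclusion $\mathbb{C}H\subseteq\mathbb{C}G$ has depth $d$ (for $d \geq 2$) precisely when the bimodule whose multiplicity matrix is $M^{(d+1)}$ is a direct summand of a direct sum of copies of the bimodule whose multiplicity matrix is $M^{(d-1)}$, which (by semisimplicity) holds if and only if $M^{(d+1)} \leq e M^{(d-1)}$ entrywise for some positive integer $e$ — one needs $e$ large enough that $eM^{(d-1)}$ dominates $M^{(d+1)}$ in every coordinate, and domination of multiplicity matrices is exactly the condition for being a direct summand in the semisimple setting. Taking the minimal such $d$ gives $d_G(H)$. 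I would be careful about the base cases $d=1,2$ (the "depth $1$" and "depth $2$" clauses are slightly special in the definition), but since $H$ is assumed non-normal we have $d_G(H)\geq 3$ by Proposition \ref{p:normal}, so only $d \geq 3$ matters and the uniform statement applies.

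For part (ii), I would use the extra hypothesis that $H$ is core-free. The point is that $M^{(d-1)} > 0$ (strictly positive in every entry) forces $M^{(d+1)} = M^{(d-1)} \cdot (\text{either } M^TM \text{ or } MM^T) \leq e M^{(d-1)}$ for suitable $e$, since multiplying a strictly positive matrix on the right by any fixed nonnegative matrix yields something bounded entrywise by a multiple of the original strictly positive matrix (each entry of the product is a fixed nonnegative combination of strictly positive entries, hence at most a constant times any of them). Conversely, if $M^{(d-1)}$ has a zero entry then I would argue, using core-freeness, that the sequence of supports is strictly increasing until it fills up: core-freeness means the only common irreducible constituent of all the $\psi_i^G$ — equivalently, the only $\chi_j$ appearing in $(\mathbb{1}_H)^G$ with the trivial character forcing connectivity — behaves so that the graph-theoretic description (the graph $\mathcal{K}(G,H)$ of Remark \ref{r:delta}(b), or rather the "powers of the adjacency-type matrix $M^TM$" picture) is connected, so positivity is achieved at exactly the step governed by the diameter, and a zero entry at step $d-1$ means depth has not yet been reached. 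The cleanest route is: show the support of $M^{(d)}$ is nondecreasing in $d$, show $M^{(d-1)}>0$ implies $M^{(d)} \leq eM^{(d-2)}$ so $d_G(H) \leq d-1+1 = d$... I need to line up the parities carefully here, and indeed the statement is that $d_G(H)$ is the smallest $d$ with $M^{(d-1)}>0$, matching the base-size bound $d_G(H)\leq 2b(G,H)-1$ since $b(G,H)$ is essentially the smallest $n$ with the analogous positivity of a product of $n$ copies of $M$.

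The main obstacle will be part (ii): proving that strict positivity of $M^{(d-1)}$ is equivalent to (not merely sufficient for) depth $d$, which requires ruling out the possibility that $M^{(d+1)} \leq e M^{(d-1)}$ holds "accidentally" while $M^{(d-1)}$ still has a zero entry. This is where core-freeness is essential and where I expect to need the connectivity of $\mathcal{K}(G,H)$ together with a monotonicity argument on supports; getting the induction and the parity bookkeeping exactly right (odd versus even depth, $(G,G)$- versus $(H,H)$-bimodules) is the delicate part. I would structure the final write-up as: (1) the semisimple bimodule dictionary; (2) the recursion lemma identifying $M^{(d)}$ with multiplicities; (3) part (i) via domination of multiplicity matrices; (4) part (ii) via the support-monotonicity and connectivity argument, invoking core-freeness.
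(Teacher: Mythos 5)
Your overall framework matches the paper's: part (i) is precisely the Burciu--Kadison--K\"{u}lshammer result (the paper simply cites \cite[Section 3]{BKK} for it), and the bimodule-multiplicity dictionary you describe is the standard way to prove it. The forward half of part (ii) is also correct: if $M^{(d-1)}>0$ then $M^{(d+1)}=M^{(d-1)}(M^TM)$ or $M^{(d-1)}(MM^T)$ is entrywise bounded by $eM^{(d-1)}$ for $e$ large enough, so $d_G(H)\leq d$ by part (i).

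The gap is in the minimality direction of (ii), which you flag but do not resolve. The line ``$M^{(d-1)}>0$ implies $M^{(d)}\leq eM^{(d-2)}$ so $d_G(H)\leq d-1+1=d$'' is wrong as written (positivity of $M^{(d-1)}$ gives domination at level $d$, not $d-1$), and your appeal to support-monotonicity plus connectivity conflates two separate issues. The argument that actually closes it is short: if $d$ is minimal with $M^{(d-1)}>0$, then $M^{(d-2)}$ has a zero entry, whereas $M^{(d)}>0$ because $M^{(d-1)}>0$ and $M$ is irredundant (every row and every column of $M$ is nonzero, for \emph{any} subgroup $H$); hence no $f$ satisfies $M^{(d)}\leq fM^{(d-2)}$, so $d_G(H)\neq d-1$ by part (i), and since depth $n$ implies depth $n+1$ the depth cannot be smaller than $d-1$ either. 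In particular your worry about ``accidental domination'' at level $d-1$ is dispatched by irredundancy, not by core-freeness or an induction on supports. Core-freeness enters only to guarantee that some $M^{(n)}>0$ exists at all, so that ``the smallest such $d$'' is well defined: the graph $\mathcal{K}(G,H)$ is connected, and if $\ell$ is its diameter then $(M^{(2\ell)})_{ij}>0$ for $i\neq j$ by \cite[Proposition 3.4]{BKK}, while the diagonal entries of $M^{(2)}$ are positive by irredundancy, giving $M^{(2\ell)}>0$. You have all the ingredients, but the parity bookkeeping and the precise role of each hypothesis need to be sorted out as above before the proof is complete.
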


\begin{proof}
Part (i) is due to Burciu, Kadison and K\"{u}lshammer (see \cite[Section 3]{BKK}). Turning to (ii), let us first assume $d$ is the smallest integer such that $M^{(d-1)} > 0$. Clearly, we have $M^{(d+1)} \leqs eM^{(d-1)}$ for some $e \in \mathbb{N}$, and we also note that $M^{(\ell)} >0$ for all $\ell \geqs d$. On the other hand, the minimality of $d$ implies that $M^{(d-2)}$ has at least one zero entry, so there is no positive integer $f$ with $M^{(d)} \leqs fM^{(d-2)}$. So part (i) implies that $d = d_G(H)$ and the result follows.

So to complete the proof, it just remains to show that if $H$ is core-free, then there exists a positive integer $n$ with $M^{(n)}>0$. As explained in Section \ref{sss:kuls} below, we can define an associated graph $\mathcal{K}(G,H)$, whose vertices are labelled by the irreducible characters of $H$. When $H$ is core-free, this graph is connected (see Proposition \ref{p:graph}(i)) and if $\ell$ denotes its diameter, then \cite[Proposition 3.4]{BKK} implies that $(M^{(2\ell)})_{ij}>0$ for all $i \ne j$. Since $M$ is an irredundant matrix (in the sense that every row and column is non-zero) it follows that $(M^{(2)})_{ii} >0$ for all $i$ and we conclude that $M^{(2\ell)}>0$.
\end{proof}

\begin{rem}\label{r:core}
Recall that the \emph{core} of a subgroup $H$ of $G$ is defined by
\[
{\rm Core}_G(H) = \bigcap_{g \in G} H^g
\]
and this is the largest normal subgroup of $G$ contained in $H$. Referring to part (ii) of Proposition \ref{p:inc}, let us observe that if ${\rm Core}_G(H) \ne 1$, then there may not exist a positive integer $n$ with $M^{(n)}>0$. For example, if $G = S_4$ and $H = D_8$, then up to a choice of ordering of the sets ${\rm Irr}(G)$ and ${\rm Irr}(H)$ we have
\[
M = \left(\begin{array}{ccccc}
1 & 0 & 1 & 0& 0 \\
0 & 0 & 0 & 0 & 1 \\
0 & 1 & 1 & 0 & 0 \\
0 & 0 & 0 & 1 & 0 \\
0 & 0 & 0 & 1 & 1 
\end{array}\right)
\]
and one can check that the $(2,1)$-entries of $M^{(2\ell)}$ and $M^{(2\ell+1)}$ are zero for all $\ell \geqs 1$. But we see that $M^{(5)} \leqs 4M^{(3)}$ and so part (i) of Proposition \ref{p:inc} implies that $d_G(H) = 4$. 
\end{rem}

\subsubsection{The K\"{u}lshammer graph}\label{sss:kuls}

Another way to study $d_G(H)$ is in terms of a simple undirected graph, which we refer to here as the \emph{K\"{u}lshammer graph}, denoted by $\mathcal{K}(G,H)$. The vertices of this graph are labelled by the irreducible characters of $H$, with an edge between distinct characters $\a,\b$ (denoted $\a \sim \b$) if $\la \a^G,\b^G \ra \ne 0$. In other words, $\a \sim \b$ if and only if the induced characters $\a^G$ and $\b^G$ have a common irreducible constituent. We then define $d(\a,\b)$ to be the minimal positive integer $m$ such that there exists a path 
\[
\a = \a_0 \sim \a_1 \sim \cdots \sim \a_m = \b
\]
in $\mathcal{K}(G,H)$. If no such path exists, then we set $d(\a,\b) = -\infty$. Note that $d(\a,\a) = 0$ for all $\a \in {\rm Irr}(H)$. 

In addition, if $\chi \in {\rm Irr}(G)$ and $\Lambda \subseteq {\rm Irr}(H)$ is the set of  irreducible constituents of the restricted character $\chi_{H}$, then we set
\[
\mu_H(\chi) = \max_{\a \in {\rm Irr}(H)} \left(\min_{\b \in \Lambda} d(\a,\b) \right),
\]
which coincides with the maximal distance in $\mathcal{K}(G,H)$ between $\Lambda$ and an irreducible character of $H$.

\begin{prop}\label{p:graph}
Let $H$ be a nontrivial core-free subgroup of $G$. Then the following hold:
\begin{itemize}\addtolength{\itemsep}{0.2\baselineskip}
\item[{\rm (i)}] The graph $\mathcal{K}(G,H)$ is connected. 

\item[{\rm (ii)}] If $m$ is the diameter of $\mathcal{K}(G,H)$, then $d_G(H) = 2m$ or $2m+1$.

\item[{\rm (iii)}] If $\max\{ \mu_H(\chi) \,:\, \chi \in {\rm Irr}(G)\} = \ell$, then $d_G(H) = 2\ell+1$ or $2\ell+2$.

\item[{\rm (iv)}] We have $d_G(H) =3$ if and only if $\la \a^G, \b^G \ra \ne 0$ for all $\a, \b \in {\rm Irr}(H)$. 

\item[{\rm (v)}] We have $d_G(H) \geqs 5$ if $\la \a^G,(\chi_H)^G \ra = 0$ for some $\a \in {\rm Irr}(H)$ and $\chi \in {\rm Irr}(G)$ with $\chi_H$ irreducible.

\item[{\rm (vi)}] If $K \leqs H$, then $d_G(K) \leqs d_G(H)$.
\end{itemize}
\end{prop}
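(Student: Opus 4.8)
The plan is to prove each part by working with the inclusion matrix $M$ and the combinatorial description of the K\"{u}lshammer graph $\mathcal{K}(G,H)$, exploiting the key fact (essentially Frobenius reciprocity) that $\langle \a^G,\b^G\rangle$ equals the $(\a,\b)$-entry of $MM^T$. For part (i), I would argue that if $\mathcal{K}(G,H)$ were disconnected, then the irreducible characters of $H$ would split into two nonempty families whose induced characters share no common irreducible constituent of $G$; summing over a family gives a nonzero character of $G$ of the form $(\mathbb{C}[G/H])$-related permutation character supported on a proper $G$-invariant subspace of the permutation module, which forces a nontrivial normal subgroup inside $\bigcap_g H^g = 1$, a contradiction. (Equivalently, one can cite the connectivity of the module-theoretic picture and the fact that $H$ core-free means the permutation character $1_H^G$ is faithful, so $M$ cannot be block-diagonal after permuting rows and columns.) For parts (ii) and (iii), I would combine Proposition~\ref{p:inc}(ii) with \cite[Proposition~3.4]{BKK}: if $m=\mathrm{diam}\,\mathcal{K}(G,H)$ then $M^{(2m)}>0$ (as in the proof of Proposition~\ref{p:inc}), so $d_G(H)\leqs 2m+1$; and minimality of $m$ provides a pair of vertices at distance exactly $m$, hence a zero entry in $M^{(2m-2)}$, giving $d_G(H)\geqs 2m$. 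Part (iii) is the same dichotomy phrased via the quantity $\mu_H(\chi)$, which measures distance from the constituent set $\Lambda$ of $\chi_H$ to an arbitrary vertex; one checks $\ell+1\leqs \mathrm{diam}\,\mathcal{K}(G,H)\leqs 2\ell+?$ is not quite the right bookkeeping, so instead I would directly relate $\max_\chi \mu_H(\chi)$ to the power of $M$ first becoming positive, obtaining $d_G(H)\in\{2\ell+1,2\ell+2\}$.

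Part (iv) is immediate from Proposition~\ref{p:inc}(ii): $d_G(H)=3$ iff $M^{(2)}=MM^T>0$ iff every entry $\langle\a^G,\b^G\rangle$ is positive, i.e.\ $\mathcal{K}(G,H)$ is complete. Part (v): if some $\chi\in\mathrm{Irr}(G)$ has $\chi_H=\a_0$ irreducible and $\langle\a^G,\a_0^G\rangle=0$ for some $\a\in\mathrm{Irr}(H)$, then the $(\a,\a_0)$-entry of $MM^T$ is zero, so $M^{(2)}\not>0$, so $d_G(H)\geqs 4$ by part (iv); to upgrade to $\geqs 5$ I would observe that $\langle\a^G,\a_0^G\rangle=0$ together with $\chi_H$ irreducible forces, by a transitivity-of-distance argument in $\mathcal{K}(G,H)$, that $d(\a,\a_0)\geqs 2$, hence $\mathrm{diam}\geqs 2$ and part (ii) gives $d_G(H)\geqs 4$; the parity refinement to $5$ comes from noting that if $d_G(H)=4$ then $M^{(5)}\leqs eM^{(3)}$ while $M^{(3)}$ already has a zero in the relevant entry (inherited from $M^{(1)}=M$ via the irreducibility of $\chi_H$), a contradiction. [The cleanest route is: $d_G(H)$ is odd whenever $M^{(d-1)}>0$ but $M^{(d-2)}$ has a zero row-or-column pattern forced by $\chi_H$ irreducible, so the minimal positive power has the parity making $d_G(H)$ odd.]

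For part (vi), the statement $d_G(K)\leqs d_G(H)$ for $K\leqs H$: I would use the base-size interpretation or, more robustly, argue at the level of the combinatorial invariant. If $d_G(H)=d$ then by Proposition~\ref{p:inc}(ii) (if $K$ is also core-free) one has $M_H^{(d-1)}>0$, and restriction/induction in stages shows the inclusion matrix $M_K$ of $K$ in $G$ factors through $M_H$ via the inclusion matrix $N$ of $K$ in $H$ as $M_K=N M_H$, from which $M_K^{(d-1)}\geqs c\,(\text{positive matrix})$ for a suitable constant $c>0$, giving $M_K^{(d-1)}>0$ and hence $d_G(K)\leqs d$; if $K$ is not core-free one instead passes to the quotient by $\mathrm{Core}_G(K)$ or uses Proposition~\ref{p:inc}(i) directly, since $M_K^{(d+1)}=N M_H^{(d+1)} N^T\cdot(\dots)$ inherits the domination $M_K^{(d+1)}\leqs e\,M_K^{(d-1)}$ from the corresponding relation for $H$.

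The main obstacle I anticipate is part (vi) in the non-core-free case, and more precisely pinning down the exact factorization $M_K = N M_H$ (columns indexed by $\mathrm{Irr}(G)$, intermediate index set $\mathrm{Irr}(H)$) and checking it is compatible with the recursion \eqref{e:Mdef} so that the domination relation of Proposition~\ref{p:inc}(i) is genuinely inherited; transitivity of induction gives the factorization of $M_K$ itself, but propagating it through the alternating $M^{(2\ell)}=M^{(2\ell-1)}M^T$ steps requires a small but careful argument. The parity sharpening in part (v) is the other delicate point, since it is a statement about which \emph{power} of $M$ first becomes entrywise positive rather than merely about connectivity, and one must use the hypothesis "$\chi_H$ irreducible" to locate a persistent zero in $M^{(2)}$ that rules out $d_G(H)=4$.
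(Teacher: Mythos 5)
Your proposal takes a genuinely more matrix-theoretic route than the paper, which disposes of parts (i)--(iii) by citation: (i) is \cite[Proposition 2.1]{BHHK}, and (ii), (iii) are Theorems 3.6 and 3.10 of \cite{BKK}. Your direct derivation of (ii) from Proposition \ref{p:inc}(ii) is sound, since $M^{(2k+2)}=M^{(2k)}M^{(2)}$ gives $(M^{(2k)})_{\alpha\beta}>0$ precisely when $d(\alpha,\beta)\leqslant k$, so $M^{(2m)}>0$ while $M^{(2m-2)}$ has a zero entry; but you explicitly leave (iii) unfinished, and your connectivity argument for (i) is only a sketch. For part (v) the paper argues via (iii): here $\Lambda=\{\chi_H\}$, so the hypothesis gives $\mu_H(\chi)\geqslant 2$ and hence $d_G(H)\geqslant 5$. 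Your bracketed computation is a valid and in fact more direct alternative: since $\langle \beta^G,\chi\rangle=\langle \beta,\chi_H\rangle=\delta_{\beta,\chi_H}$, the $(\alpha,\chi)$-entry of $M^{(3)}=MM^TM$ equals $\langle \alpha^G,(\chi_H)^G\rangle=0$, so $M^{(3)}$ is not entrywise positive and Proposition \ref{p:inc}(ii) forces $d_G(H)\geqslant 5$. (The preceding ``transitivity-of-distance'' step in your write-up only re-derives $d_G(H)\geqslant 4$ and should be deleted; only the matrix computation does the work.)

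For part (vi) the paper lifts a shortest path $\alpha_0\sim\cdots\sim\alpha_k$ in $\mathcal{K}(G,H)$ to $\mathcal{K}(G,K)$ by choosing an irreducible constituent $\gamma_i$ of each restriction $(\alpha_i)_K$, which shows that the diameter (respectively $\mu_K(\chi)$) does not increase. Your factorization $M_K=NM_H$ is correct by transitivity of induction, but the propagation you flag as a ``small but careful argument'' is the actual content and must be supplied. The missing observation is that $N^TN\geqslant I$ entrywise, since its diagonal entries are $\langle\beta_K,\beta_K\rangle\geqslant 1$ and all entries are nonnegative; an induction on the recursion \eqref{e:Mdef} then gives $M_K^{(2\ell)}\geqslant NM_H^{(2\ell)}N^T$ and $M_K^{(2\ell+1)}\geqslant NM_H^{(2\ell+1)}$, and since every row of $N$ is nonzero, $M_H^{(d-1)}>0$ implies $M_K^{(d-1)}>0$ and hence $d_G(K)\leqslant d_G(H)$. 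Your worry about a ``non-core-free case'' for $K$ is vacuous: ${\rm Core}_G(K)\leqslant{\rm Core}_G(H)=1$, so every subgroup of a core-free subgroup is core-free, and the only case needing separate treatment is $K=1$, where $d_G(K)=1$. With these repairs (and accepting the citations for (i) and (iii)) your argument goes through and is a legitimate alternative to the paper's graph-theoretic treatment of (v) and (vi).
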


\begin{proof}
Part (i) is \cite[Proposition 2.1]{BHHK}, which implies that if $H$ is a subgroup of $G$, then $\mathcal{K}(G,H)$ is connected if and only if $H$ is core-free. Parts (ii) and (iii) follow from Theorems 3.6 and 3.10 in \cite{BKK}, respectively.

Next consider (iv) and recall that $d_G(H) \geqs 3$ by Proposition \ref{p:normal}. If $\la \a^G, \b^G \ra \ne 0$ for all $\a, \b \in {\rm Irr}(H)$ then $\mathcal{K}(G,H)$ is complete, so this graph has diameter $1$ and part (ii) implies that $d_G(H) = 3$. And conversely, if $d_G(H) = 3$ then part (ii) implies that $\mathcal{K}(G,H)$ is complete and the result follows.

Turning to (v), and referring to the above definition of $\mu_H(\chi)$, we note that $\Lambda = \{ \chi_H\}$ and
\[
\mu_H(\chi) = \max_{\a \in {\rm Irr}(H)} d(\a,\chi_H).
\]
Since $\la \a^G,(\chi_H)^G \ra = 0$ for some $\a \in {\rm Irr}(H)$, it follows that $d(\a,\chi_H) \geqs 2$ and thus $\mu_H(\chi) \geqs 2$. Therefore, $d_G(H) \geqs 5$ by part (iii).

Finally, suppose $K$ is a subgroup of $H$, so both $K$ and $H$ are core-free. If $K=1$, then $d_G(K) = 1 < d_G(H)$, so we are free to assume that $K$ is nontrivial.

First assume $d_G(H) = 2m+1$ is odd, so part (ii) implies that $\mathcal{K}(G,H)$ has diameter $m$ and it suffices to show that $\mathcal{K}(G,K)$ has diameter at most $m$. So let $\gamma,\delta \in {\rm Irr}(K)$ be distinct irreducible characters of $K$ and fix $\a,\b \in {\rm Irr}(H)$ such that $\gamma$ and $\delta$ are irreducible constituents of the restricted characters $\a_K$ and $\b_K$. Set $d(\a,\b) = k$ with respect to $\mathcal{K}(G,H)$ and  note that $k \leqs m$, say 
\[
\a = \a_0 \sim \a_1 \sim \cdots \sim \a_k = \b
\]
is a path in $\mathcal{K}(G,H)$, in which case $\la \a_i^G, \a_{i+1}^G \ra \ne 0$ for all $i$. Now let $\gamma_i$ be an irreducible constituent of the restriction of $\a_i$ to $K$,  setting $\gamma_0 = \gamma$ and $\gamma_k = \delta$. Then $\la \gamma_i^G, \gamma_{i+1}^G \ra \ne 0$ for all $i$ and thus $d(\gamma,\delta) \leqs k$ in $\mathcal{K}(G,K)$. This shows that the diameter of $\mathcal{K}(G,K)$ is at most $m$ and thus part (ii) gives $d_G(K) \leqs 2m+1 = d_G(H)$. 

A very similar argument, working with part (iii), gives the same conclusion when $d_G(H) = 2m$ is even (specifically, $\mu_K(\chi) \leqs \mu_H(\chi)$ for all $\chi \in {\rm Irr}(G)$).
We omit the details.
\end{proof}

\subsubsection{Bases}\label{sss:bases}

Recall that if $G \leqs {\rm Sym}(\O)$ is a transitive permutation group on a finite set $\O$ with point stabilizer $H$, then a \emph{base} for $G$ is a subset $B$ of $\O$ with trivial pointwise stabilizer (that is to say, the identity element is the only element in $G$ fixing every point in $B$). We then define the \emph{base size} of $G$, denoted $b(G,H)$, to be the minimal size of a base. This is a classical and intensively studied invariant in permutation group theory and there have been many remarkable advances in this area in recent years, especially in the setting where $G$ is an almost simple primitive group (we refer the reader to Section \ref{s:intro} for more details). 

Notice that the point stabilizer $H$ is a core-free subgroup of $G$ and we can interpret $b(G,H)$ as the smallest positive integer $b$ such that there exist elements $x_1, \ldots, x_b$ in $G$ with
\[
\bigcap_{i=1}^b H^{x_i} = 1.
\]
Given this observation, the following result is an immediate consequence of \cite[Theorem 6.9]{BKK}.

\begin{prop}\label{p:bases}
Let $G$ be a finite group, let $H$ be a core-free subgroup of $G$ and let $b(G,H)$ be the base size of $G$ with respect to the action on $G/H$. 
\begin{itemize}\addtolength{\itemsep}{0.2\baselineskip}
\item[{\rm (i)}] We have $d_G(H) \leqs 2b(G,H)-1$.
\item[{\rm (ii)}] In particular, if $b(G,H)=2$ then $d_G(H) = 3$.
\end{itemize}
\end{prop}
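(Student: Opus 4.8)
The plan is to derive both parts directly from \cite[Theorem 6.9]{BKK}, which bounds the depth of a subgroup in terms of the least number of conjugates needed to cut out its core. Concretely, I would quote that theorem in the form: $d_G(H) \leqslant 2n+1$, where $n$ is the smallest non-negative integer for which there exist $g_1,\dots,g_n \in G$ with $H \cap H^{g_1} \cap \cdots \cap H^{g_n} = {\rm Core}_G(H)$. The only real work in part (i) is then to identify this invariant $n$ with $b(G,H)-1$. For this I would use the reformulation of the base size recorded just above the proposition, namely that $b(G,H)$ is the smallest $b$ such that $\bigcap_{i=1}^{b} H^{x_i} = 1$ for suitable $x_i \in G$. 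Since $H$ is core-free we have ${\rm Core}_G(H) = 1$, and for any $x_1,\dots,x_b$ the intersection $\bigcap_{i} H^{x_i}$ is conjugate (via $x_1^{-1}$) to $H \cap \bigcap_{i \geqslant 2} H^{x_i x_1^{-1}}$; hence we may assume $x_1 = 1$, so that the invariant $n$ appearing in \cite[Theorem 6.9]{BKK} is exactly $b(G,H)-1$. Substituting into the bound gives $d_G(H) \leqslant 2(b(G,H)-1)+1 = 2b(G,H)-1$, which is part (i).

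For part (ii), assume $b(G,H) = 2$, so part (i) immediately yields $d_G(H) \leqslant 3$. For the matching lower bound, observe first that $b(G,H) = 2$ forces $H \neq 1$: a trivial point stabiliser corresponds to a regular action, which has base size $1$. Thus $H$ is a nontrivial core-free subgroup, and in particular $H$ is not normal in $G$, since a nontrivial normal subgroup cannot be core-free. Proposition \ref{p:normal} then gives $d_G(H) > 2$, i.e. $d_G(H) \geqslant 3$, and combining the two inequalities yields $d_G(H) = 3$.

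I do not expect a genuine obstacle here: the mathematical content is carried entirely by \cite[Theorem 6.9]{BKK}, and what remains is the off-by-one bookkeeping identifying $n$ with $b(G,H)-1$ together with an appeal to Proposition \ref{p:normal}. The one point that deserves care is ensuring \cite[Theorem 6.9]{BKK} is invoked in precisely the form used above, with the correct convention relating the number of conjugates to the exponent in $2n+1$; a quick consistency check against the known equality $d_{S_{n+1}}(S_n) = 2n-1$, where $b(S_{n+1},S_n) = n$, confirms that the indexing is as claimed and that the bound in (i) is sharp.
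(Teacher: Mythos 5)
Your proposal is correct and follows the same route as the paper, which simply derives both parts as an immediate consequence of \cite[Theorem 6.9]{BKK} together with the reformulation of $b(G,H)$ as the least number of conjugates of $H$ with trivial intersection; your off-by-one bookkeeping (normalising $x_1=1$ so that the invariant in \cite{BKK} is $b(G,H)-1$) and the appeal to Proposition \ref{p:normal} for the lower bound in (ii) are exactly the details the paper leaves implicit.
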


The connection between the depth $d_G(H)$ of a subgroup and the base size $b(G,H)$ encapsulated in part (i) of Proposition \ref{p:bases} has been used effectively by several authors to study the depth of subgroups in some specific families of simple groups. Here the rank one groups of Lie type have been the main focus and we refer the reader to the sequence of papers \cite{F1,F2,HHP1,HHP2,JHH}. But the literature on bases for almost simple groups is far more extensive and one of our main aims in this paper is to exploit  some of the main advances, which will allow us to substantially extend our understanding of the depth of subgroups of almost simple groups in far greater generality than has been achieved up to date.

A key tool for studying bases is a powerful probabilistic method originally introduced by Liebeck and Shalev \cite{LSh99} in their proof of a conjecture of Cameron and Kantor on base sizes for a certain family of almost simple primitive permutation groups (the so-called \emph{non-standard} groups, which we briefly highlighted in Section \ref{s:intro} in relation to Theorem \ref{t:ns}). In view of the bound in Proposition \ref{p:bases}(i), this approach can also be used to obtain effective upper bounds on the depth of a subgroup. 

Let $G$ be a finite group, let $H$ be a core-free subgroup and consider the natural action of $G$ on the set $\O = G/H$ of cosets of $H$ in $G$. Given a positive integer $t$, let 
\[
P(G,t) = \frac{|\{ (\a_1, \ldots, \a_t) \in \O^t \,:\, \bigcap_i G_{\a_i} \ne 1\}|}{|\O^t|}
\]
be the probability that a uniformly random $t$-tuple of points in $\O$ does not form a base for $G$. Let $\chi$ be the corresponding permutation character of $G$ and define
\begin{equation}\label{e:QGt}
Q(G,t) =  \sum_{i=1}^k |x_i^G| \cdot \left(\frac{\chi(x_i)}{n}\right)^t,
\end{equation}
where $n = |\O|= |G:H| = \chi(1)$ and $\{x_1, \ldots, x_k\}$ is a complete set of representatives of the conjugacy classes in $G$ of elements of prime order. 

\begin{prop}\label{p:prob}
Let $G$ be a finite group and let $H$ be a nontrivial core-free subgroup.
\begin{itemize}\addtolength{\itemsep}{0.2\baselineskip}
\item[{\rm (i)}] If $Q(G,t)<1$ then $d_G(H) \leqs 2t-1$. 
\item[{\rm (ii)}] In particular, if $Q(G,2)<1$ then $d_G(H) = 3$.
\end{itemize}
\end{prop}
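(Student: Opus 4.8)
The plan is to deduce Proposition \ref{p:prob} directly from the base-size bound in Proposition \ref{p:bases}(i), using the standard fact that the quantity $Q(G,t)$ introduced in \eqref{e:QGt} dominates the non-base probability $P(G,t)$. So the first step is to establish the inequality $P(G,t) \leqs Q(G,t)$. For this, recall that a $t$-tuple $(\a_1,\ldots,\a_t) \in \O^t$ fails to be a base precisely when some nontrivial element $x \in G$ fixes every $\a_i$; replacing $x$ by a suitable power, we may assume $x$ has prime order. For a fixed $x$, the number of points of $\O$ fixed by $x$ is exactly $\chi(x)$ (the value of the permutation character), so the probability that a uniformly random $t$-tuple is fixed by $x$ is $(\chi(x)/n)^t$. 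Summing the union bound over a set of representatives $x_1,\ldots,x_k$ of the classes of prime-order elements, and noting that $x \in x_i^G$ fixes $\a$ if and only if $\a$ lies in the fixed-point set, whose size is conjugation-invariant, gives
\[
P(G,t) \leqs \sum_{i=1}^k |x_i^G| \cdot \left(\frac{\chi(x_i)}{n}\right)^t = Q(G,t).
\]

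The second step is immediate: if $Q(G,t) < 1$ then $P(G,t) < 1$, so there exists at least one $t$-tuple of points in $\O$ with trivial pointwise stabiliser, i.e. a base of size $t$. Hence $b(G,H) \leqs t$. The third step invokes Proposition \ref{p:bases}(i), which gives $d_G(H) \leqs 2b(G,H) - 1 \leqs 2t - 1$, proving part (i). Part (ii) is then the special case $t = 2$: if $Q(G,2) < 1$ then $d_G(H) \leqs 3$, and since $H$ is a nontrivial core-free subgroup, Proposition \ref{p:normal} forces $d_G(H) \geqs 3$, so $d_G(H) = 3$.

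I do not expect a genuine obstacle here: every ingredient is either elementary character theory (identifying $\chi(x)$ with the number of fixed points of $x$) or an already-established result quoted in the excerpt. The only point requiring a little care is the reduction from an arbitrary nontrivial stabilising element to one of prime order — one should note that if $1 \neq x$ fixes all $\a_i$ then so does any nontrivial power of $x$, and some such power has prime order, so the event "the tuple is not a base" is contained in the union, over prime-order $x$, of the events "$x$ fixes the tuple". After that the union bound and the conjugacy-class bookkeeping are routine, and the conclusion follows by chaining the inequalities through Propositions \ref{p:bases} and \ref{p:normal}.
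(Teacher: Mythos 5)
Your proof is correct and follows essentially the same route as the paper: bound the non-base probability $P(G,t)$ by $Q(G,t)$ via the union bound over prime-order elements (using $\chi(x)/n$ as the fixed-point probability), deduce $b(G,H)\leqs t$ when $Q(G,t)<1$, and conclude via Proposition \ref{p:bases}. Your explicit reduction to prime-order elements and your use of Proposition \ref{p:normal} for the lower bound in part (ii) are just slightly more detailed versions of steps the paper leaves implicit.
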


\begin{proof}
First observe that a $t$-tuple $(\a_1, \ldots, \a_t) \in \O^t$ is not a base for $G$ if and only if there exists an element $x \in G$ of prime order fixing each $\a_i$. Now, the probability that a given element $x \in G$ fixes a uniformly random point in $\O$ is given by $\chi(x)/n$, so the probability it fixes a random $t$-tuple of points is equal to $(\chi(x)/n)^t$. Therefore,
\[
P(G,t) \leqs \sum_{x \in \mathcal{P}} \left(\frac{\chi(x)}{n}\right)^t = Q(G,t),
\]
where $\mathcal{P} = \bigcup_i x_i^G$ is the set of prime order elements in $G$. The result now follows from Proposition \ref{p:bases} since $b(G,H) \leqs t$ if and only if $P(G,t)<1$.  
\end{proof}

\subsection{Almost simple groups}\label{ss:as}

Recall that a finite group $G$ is \emph{almost simple} if the socle of $G$, denoted $G_0$, is a nonabelian simple group. In particular, by identifying $G_0$ with its group of inner automorphisms, we have $G_0 \normeq G \leqs {\rm Aut}(G_0)$.

In this paper, we are primarily interested in studying the depth of core-free subgroups of almost simple groups, which allows us to apply many of the results and techniques presented in Section \ref{ss:depth}. But let us begin by considering the depth of the socle of an almost simple group.

\begin{prop}\label{p:socle}
Let $G$ be an almost simple group with socle $G_0$. Then $d_G(G_0) = 1+\e$, where $\e = 0$ if $G = G_0$, otherwise $\e=1$.
\end{prop}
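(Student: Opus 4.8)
The plan is to split into the two cases according to whether $G=G_0$ or $G>G_0$, and in each case to nail down the exact depth by combining Proposition~\ref{p:normal} with the inclusion matrix criterion of Proposition~\ref{p:inc}(i). Note first that $G_0 \normeq G$, so $G_0$ is a normal (hence non-core-free) subgroup, and Proposition~\ref{p:normal} immediately gives $d_G(G_0) \leqs 2$. The only question is whether the depth is $1$ or $2$, and Proposition~\ref{p:normal} tells us precisely: $d_G(G_0) = 1$ if and only if $G = G_0 C_G(x)$ for every $x \in G_0$, and $d_G(G_0) = 2$ otherwise.

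For the case $G = G_0$: here $d_G(H) = 1$ for $H = G$ is recorded in the introduction (the depth of $G$ itself is $1$), but let me instead verify it from the criterion. With $H = G = G_0$ we trivially have $G = HC_G(x)$ for all $x$, so $d_G(G_0) = 1 = 1 + 0$, as claimed. Alternatively, one can see directly from the definition that $\mathbb{C}G$ is a direct summand of $\mathbb{C}G$ as a $(\mathbb{C}G,\mathbb{C}G)$-bimodule, giving depth $1$.

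For the case $G_0 < G$: I need to show $d_G(G_0) = 2$, i.e. that $G \ne G_0 C_G(x)$ for some $x \in G_0$. Since $G_0$ is a nonabelian simple group, its centre is trivial, so $C_{G_0}(x) \ne G_0$ for any nontrivial $x \in G_0$; more to the point, I claim $C_G(G_0) = 1$. Indeed $C_G(G_0) \cap G_0 = Z(G_0) = 1$, and $C_G(G_0) \normeq G$ centralizes the socle, so $C_G(G_0) \, G_0 = C_G(G_0) \times G_0$ would be a subgroup with socle containing $C_G(G_0)$ in its centralizer — by minimality/structure of the socle in an almost simple group, $C_G(G_0) = 1$. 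Now pick any $x \in G_0$ with $C_{G_0}(x)$ proper in $G_0$ (such $x$ exists since $G_0$ is nonabelian); then $C_G(x) \cap G_0 = C_{G_0}(x) < G_0$, and since $C_G(G_0) = 1$ one checks that $|C_G(x)|$ is small enough that $G_0 C_G(x) < G$. A cleaner route: if $G_0 C_G(x) = G$ held for every $x \in G_0$, then $d_G(G_0) = 1$, which by the Boltje–K\"{u}lshammer criterion would force $\mathbb{C}G$ to be a direct summand of a sum of copies of $\mathbb{C}G_0$ as $(\mathbb{C}G_0,\mathbb{C}G_0)$-bimodules; but in fact it is enough to exhibit a single $x \in G_0$ with $G_0 C_G(x) \ne G$, which follows from $C_G(G_0) = 1$ together with $|G:G_0| \geqs 2$: taking $x$ to be an element whose $G_0$-conjugacy class is not fixed setwise by the image of $G$ in $\mathrm{Out}(G_0)$ — or simply taking $x$ of prime order with $C_{G_0}(x)$ a proper subgroup whose index exceeds $|C_G(x):C_{G_0}(x)| \leqs |G:G_0|$ would not in general be bijective — does the job.

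The main obstacle, and the step that needs the most care, is the verification that $G_0 C_G(x) \ne G$ for a suitable $x \in G_0$ in the case $G > G_0$. The key structural fact is $C_G(G_0) = 1$, which controls $|C_G(x)|$ via $|C_G(x) : C_{G_0}(x)| \leqs |G : G_0|$; combined with the existence of $x \in G_0$ for which $C_{G_0}(x)$ has index in $G_0$ strictly larger than $|G:G_0|$ (which is immediate since conjugacy class sizes in a fixed nonabelian simple group can be made arbitrarily large relative to the bounded quantity $|G:G_0|$ — e.g. take $x$ in the largest conjugacy class), this yields $|G_0 C_G(x)| = |G_0| \cdot |C_G(x)| / |C_{G_0}(x)| < |G_0| \cdot |G:G_0| = |G|$, so $G_0 C_G(x) \ne G$ and Proposition~\ref{p:normal} gives $d_G(G_0) = 2 = 1 + 1$. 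This completes both cases and establishes $d_G(G_0) = 1 + \e$ with $\e$ as stated.
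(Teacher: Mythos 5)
Your reduction is correct as far as it goes: $G_0$ is normal, so Proposition \ref{p:normal} gives $d_G(G_0) \leqs 2$; the case $G = G_0$ is immediate; and for $G > G_0$ everything comes down to exhibiting some $x \in G_0$ with $G \ne G_0 C_G(x)$. But your argument for that last step does not work. The identity $|G_0 C_G(x)| = |G_0|\,|C_G(x)|/|C_{G_0}(x)|$ together with the embedding of $C_G(x)/C_{G_0}(x)$ into $G/G_0$ only yields $|G_0C_G(x)| \leqs |G|$, with equality holding precisely when $x^G = x^{G_0}$. You assert a strict inequality, but you derive it from the hypothesis that $|G_0 : C_{G_0}(x)| > |G:G_0|$, which is a statement about a completely different index and has no bearing on whether $C_G(x)$ surjects onto $G/G_0$. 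Choosing $x$ in a large $G_0$-class does nothing: the condition $G = G_0C_G(x)$ is equivalent to the $G_0$-class of $x$ being $G$-invariant, and a class can be arbitrarily large and still be invariant under every automorphism induced by $G$.

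What is actually needed, and what your proof is missing, is the fact that a nontrivial outer automorphism of a nonabelian finite simple group cannot fix every conjugacy class. This is a theorem of Feit and Seitz (\cite[Theorem C]{FS}), proved using the classification, and it is exactly the ingredient the paper invokes: if $G = G_0C_G(x)$ for all $x \in G_0$, then every $G_0$-class is $G$-invariant, contradicting Feit--Seitz since any element of $G \setminus G_0$ induces a nontrivial outer automorphism of $G_0$ (here your correct observation that $C_G(G_0)=1$ is the relevant structural fact). There is no elementary counting route around this: class-preserving outer automorphisms do exist for general finite groups, so the simplicity of $G_0$ must enter through a result of this depth.
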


\begin{proof}
In view of Proposition \ref{p:normal}, we may assume $G \ne G_0$ and it suffices to show that there exists an element $x \in G_0$ such that $G \ne G_0C_G(x)$. 

Suppose $G = G_0C_G(x)$ for all $x \in G_0$. Then $x^G = x^{G_0}$ and thus every conjugacy class in $G_0$ is $G$-invariant. But this contradicts a theorem of Feit and Seitz \cite[Theorem C]{FS} and we conclude that $d_G(G_0) = 2$.
\end{proof}

As mentioned in Section \ref{s:intro}, there are several results in the literature concerning the depth of subgroups of certain almost simple groups (namely, symmetric and alternating groups, together with the rank one simple groups of Lie type). A summary of these results is presented in Table \ref{tab:as}.

\begin{rem}\label{r:tabas}
Let us record some comments on the pairs $(G,H)$ appearing in Table \ref{tab:as}.
\begin{itemize}\addtolength{\itemsep}{0.2\baselineskip}
\item[{\rm (a)}] In each case, $H$ is a proper nontrivial subgroup of $G$. For the groups of Lie type, we write $q = p^f$ with $p$ a prime, and we write ${\rm Syl}_p(G)$ for the set of Sylow $p$-subgroups of $G$ (so in each case appearing in the table, $N_G(S)$ is a Borel subgroup of $G$ for each $S \in {\rm Syl}_p(G)$).

\item[{\rm (b)}] Suppose $G = S_n$ and $H = S_k \times S_{n-k}$, with $2k \leqs n$. As indicated in the table, if $n \geqs (k^2+2)/2$ then the exact depth $d_G(H)$ is known. In addition, $d_G(H)$ has been computed for all $(n,k)$ with $k \leqs 10$ and $2k \leqs n \leqs 39$ (see \cite[Theorem 3.10]{FKR}). And in the remaining cases, the authors of \cite{FKR} present upper and lower bounds on $d_G(H)$ (for example, see \cite[Corollary 3.2]{FKR} for a general lower bound). However, it remains an open problem to determine $d_G(H)$ precisely in all cases.

\item[{\rm (c)}] A complete analysis of the depth of subgroups of $G = {\rm L}_2(q)$ is given by Fritzsche in \cite{F1}. It turns out that there are two natural families of subgroups with depth $5$, while all other proper nontrivial subgroups have depth $3$ if $q \geqs 13$. There are a handful of additional subgroups $H$ with $d_G(H) > 3$ when $q \leqs 11$, namely
\[
\mbox{ } \hspace{10mm} 
d_G(H) = \left\{ \begin{array}{ll}
5 & \mbox{if $H = S_4$ and $q \in \{4,5,7\}$, or if $H = A_5$ and $q \in \{9,11\}$} \\
4 & \mbox{if $H = S_4$ and $q=9$.} 
\end{array}\right.
\]

\item[{\rm (d)}] The depth of every maximal subgroup of $G = {}^2G_2(q)$ with $q \geqs 27$ is computed in \cite{HHP2}. As far as the author is aware, this result has not been extended to all proper subgroups of $G$. In addition, note that ${}^2G_2(3)' \cong {\rm L}_2(8)$, so the depth of every subgroup of ${}^2G_2(3)'$ can be read off from \cite{F1}.
\end{itemize}
\end{rem}
 
{\scriptsize
\begin{table}
\[
\begin{array}{lllll} \hline
G & H  & \mbox{Conditions} & d_G(H) & \mbox{References} \\ \hline
S_n & S_{n-k} & k = 1 & 2n-3 & \mbox{\cite[6.19, A.2]{BKK}} \\
& & 1 \leqs k < n & 2\lfloor (n-2)/k\rfloor +1 & \mbox{\cite[2.1]{FKR}} \\
A_n & A_{n-k} & k=1 & 2n-2\lceil \sqrt{n-1} \rceil-1 & \mbox{\cite[A.5]{BKK}} \\
S_n & S_k \times S_{n-k} & k \geqs 2, \; n \geqs (k^2+2)/2 & 2\lceil 2(n-1)/(k+1)\rceil-1 & \mbox{\cite[3.7]{FKR}} \\
& & n = 2k & 2\lceil \log_2k\rceil+1 &  \\ 
& & & & \\
{\rm L}_2(q) & D_{2(q+1)} & \mbox{$q$ even} & 5 & \mbox{\cite[2.22]{F1}} \\
& S{:}C_r \leqs N_G(S) & S \in {\rm Syl}_p(G),\, r > 1 & 5 &  \\
& \mbox{other} & q \geqs 13 & 3 &  \\
& & & & \\
{}^2B_2(q) & S{:}C_r \leqs N_G(S) & S \in {\rm Syl}_2(G),\, r > 1 & 5 & \mbox{\cite[1.3]{JHH}, \cite[5.1]{HHP1}} \\
& \mbox{other} & & 3 &  \\
& & & & \\
{}^2G_2(q) & N_G(S) & q \geqs 27,\, S \in {\rm Syl}_3(G) & 5 & \mbox{\cite[1.4]{HHP2}} \\
& \mbox{other maximal} & & 3 &  \\
\hline
\end{array}
\]
\caption{Earlier depth results for almost simple groups}
\label{tab:as}
\end{table}
}

\subsection{Computational methods}\label{ss:comp}

Computational methods play a key role in the proofs of several of our main results and we use both {\sc Magma} \cite{magma} (version V2.28-19) and \textsf{GAP} \cite{GAP} (version 4.13.1). We also make use of the extensive information on character tables and class fusions, which is available in the \textsf{GAP} Character Table Library \cite{GAPCTL}. In this section, we provide a brief summary of our main techniques. 

Throughout this section, $G$ will denote an almost simple group with socle $G_0$ and $H$ will be a nontrivial core-free subgroup of $G$.

\subsubsection{{\sc Magma} computations}\label{sss:mag}

Let us explain how we can compute $d_G(H)$ with the aid of {\sc Magma} \cite{magma} in a typical situation that arises in this paper. 

First we use the function \texttt{AutomorphismGroupSimpleGroup} in order to work with a suitable permutation representation of $G$ and we embed $H$ in a maximal subgroup $H_1$ of $G$, which we can construct via the function \texttt{MaximalSubgroups}. If $|H_1|^2 < |G|$, then we can use random search to seek an element $x \in G$ with $H_1 \cap H_1^x = 1$; if such an element exists, then $b(G,H_1) = 2$ and thus $d_{G}(H_1) = 3$ by Proposition \ref{p:bases}(ii), which in turn implies that $d_G(H) = 3$ by Proposition \ref{p:graph}(vi). So let us assume that this random search is inconclusive (after a fixed number of attempts), or that $|H_1|$ satisfies the inequality $|H_1|^2 \geqs |G|$. 

To proceed in this situation, we use the \texttt{CharacterTable} function, which is an implementation of an algorithm of Unger \cite{Unger}, to compute the character tables of both $G$ and $H_1$. Then by inducing the irreducible characters of $H_1$, we can calculate the inclusion matrix $M$ for the embedding of $H_1$ in $G$ and then it is a routine computation to determine the minimal $d_1$ such that $M^{(d_1-1)} > 0$ in the notation of Proposition \ref{p:inc}. Then by part (ii) of this proposition, we deduce that $d_G(H_1) = d_1$ and thus $d_G(H) \leqs d_1$ by Proposition \ref{p:graph}(vi). And as above, we conclude that $d_G(H) = 3$ if $d_1 = 3$.

Finally, let us assume $H \ne H_1$ and $d_1 \geqs 4$. In this situation, we embed $H$ in a maximal subgroup $H_2$ of $H_1$ and we proceed as above in order to compute $d_2 = d_G(H_2)$. If $H = H_2$, or if $d_2 = 3$, then we conclude that $d_G(H) = d_2$, otherwise we embed $H$ in a maximal subgroup of $H_2$ and we repeat again.

\begin{rem}\label{r:mag}
In practice, for the groups $G$ we will be working with computationally, we will usually find that every $3$-maximal subgroup of $G$ has depth $3$ and so the iterative process described above terminates after at most two or three iterations. And we can use essentially the same approach to determine all the proper subgroups $H$ of $G$ (up to conjugacy in $G$) with $d_G(H) \geqs 4$, repeatedly taking maximal subgroups to descend deeper in to the subgroup lattice of $G$ until we reach a layer where every subgroup has depth at most $3$.
\end{rem}
 
\begin{rem}\label{r:web}
There are a small number of cases we need to consider where the {\sc Magma} function \texttt{MaximalSubgroups} highlighted above is ineffective. For example, this is the case when $G$ is Fischer's sporadic group ${\rm Fi}_{24}'$. To handle this situation, we can work with explicit generators provided in the Web Atlas \cite{WebAt}, which allows us to construct a representative of the conjugacy classes of maximal subgroups $H$ we need to consider. In each case, the generators are presented as words in a pair of standard generators for $G$. Here it is perhaps worth noting that for $G = {\rm Fi}_{24}'$ we first construct $J = N_L(H) = H.2$ in $L = {\rm Fi}_{24} = G.2$, using the generators in \cite{WebAt}, and then we obtain $H$ as $J \cap G$, which allows us to construct all the maximal subgroups of $G$ we need to consider (see Case 1 in the proof of Proposition \ref{p:3}).
\end{rem}

\subsubsection{\textsf{GAP} computations}\label{sss:gap}

We will also work with \textsf{GAP} \cite{GAP} and the Character Table Library \cite{GAPCTL}. Let us suppose that the character tables of $G$ and $H$ are available in \cite{GAPCTL} (for example, we may be able to access the latter via the functions \texttt{Maxes} or \texttt{NamesOfFusionSources}) together with the fusion map from $H$-classes to $G$-classes. In this situation, we can induce characters of $H$ to $G$, which allows us to calculate the corresponding inclusion matrix $M$. We can then determine the minimal $d$ such that $M^{(d-1)} >0$, which coincides with $d_G(H)$ by Proposition \ref{p:inc}(ii).

\begin{rem}\label{r:gap}
There are several cases where we use \textsf{GAP} to compute $d_G(H)$, for which it would be impossible to work directly with {\sc Magma}. For example, using \textsf{GAP} as above we can show that $d_G(H) = 5$ when $G = \mathbb{M}$ and $H = 2.\mathbb{B}$, which is not a case that we could handle using {\sc Magma}.
\end{rem}

The above approach can also be effective when the fusion map from $H$-classes to $G$-classes is not available in \cite{GAPCTL}. In this situation, we can use the function \texttt{PossibleClassFusions} to obtain a collection of candidate fusion maps $f_1, \ldots, f_s$ (one of which is the correct map). Then for each candidate map $f_i$ and each irreducible character $\a$ of $H$, we can use \texttt{InducedClassFunctionsByFusionMap} to calculate a candidate for the corresponding induced character $\a_i = \a^G$, which in turn allows us to compute a candidate inclusion matrix $M_i$ (one for each possible fusion map). If $d_i$ is minimal such that $M_i^{(d_i-1)} >0$, then 
\[
\min_i d_i \leqs d_G(H) \leqs \max_i d_i.
\]
In particular, if all the $d_i$ are equal, then $d_G(H) = d_1$.

In some cases, we will also use \textsf{GAP} in order to implement the probabilistic approach for bounding $d_G(H)$, which is featured in Proposition \ref{p:prob}. More precisely, suppose we have access to the character table of $G$ and we are able to compute the permutation character $\chi$ corresponding to the action of $G$ on $G/H$. Then for each integer $t \geqs 2$, we can  evaluate the expression $Q(G,t)$ in \eqref{e:QGt}, recalling that $d_G(H) \leqs 2t-1$ if $Q(G,t) < 1$. Moreover, if $Q(G,2) < 1$ then $d_G(H) = 3$. And as above, we may be in a situation where we can only calculate a collection of candidate permutation characters $\a_1, \ldots, \a_s$ for the action of $G$ on $G/H$. Then for each $\a_j$ we can compute 
\[
Q_j(G,t) = \sum_{i=1}^k |x_i^G| \cdot \left(\frac{\a_j(x_i)}{n}\right)^t,
\]
where $n = |G:H|$ and $x_1, \ldots, x_k$ is a set of representatives of the conjugacy classes in $G$ of elements of prime order. And of course, if $Q_j(G,t) < 1$ for all $j$, then $d_G(H) \leqs 2t-1$.

\begin{rem}\label{r:com}
Some cases will require a combination of methods. For example, suppose the character table of $G$ is available in \textsf{GAP}, but it is not feasible to work directly with $G$ in {\sc Magma} (for example, $G$ could be the Baby Monster or the Monster sporadic group). And let us assume the character table of $H$ is not available in \cite{GAP}, but we have a permutation or matrix representation of $H$, which allows us to calculate the character table of $H$ in {\sc Magma}. In this situation, we first use {\sc Magma} to construct the character table of $H$ and we then use the \textsf{GAP} function \texttt{GAPTableOfMagmaFile} to convert the {\sc Magma} output into \textsf{GAP}-readable form. This allows us to access the character table of $H$ in \textsf{GAP} and we can then use \texttt{PossibleClassFusions} to determine a set of candidate inclusion matrices as above (or a set of candidate permutation characters), which allows us to proceed as before. For example, see the proof of Case 2 in the proof of Proposition \ref{p:3}, where we use this approach to study the depth of certain subgroups of a maximal subgroup $2^{11}{:}{\rm M}_{24}$ of ${\rm J}_4$.
\end{rem}

\section{Proof of Theorems \ref{t:ns}-\ref{t:class}}\label{s:proofs}

The purpose of this section is to prove Theorems \ref{t:ns}-\ref{t:class}, which rely heavily on the extensive literature on bases for almost simple primitive permutation groups.

\subsection{Proof of Theorem \ref{t:ns}}

Define $G$, $G_0$ and $H$ as in the statement of the theorem. We appeal initially to the proof of Cameron's conjecture, which is presented in the series of papers \cite{B07,BGS,BLS,BOW}. In particular, we have $b(G,H) \leqs 7$, with equality if and only if $G = {\rm M}_{24}$ and $H = {\rm M}_{23}$, so the bound in part (iv) follows immediately from Proposition \ref{p:bases}(i). Similarly, part (i) follows by combining \cite[Corollary 1.2]{BGS} with the bound on $d_G(H)$ in Proposition \ref{p:bases}(i), noting that the special case $(G,H) = (S_6,S_5)$ with $H$ acting primitively on $\{1,\ldots, 6\}$ is recorded as $(S_6,{\rm PGL}_2(5))$ in \cite[Table 1]{BGS}.

Next assume $G_0$ is a sporadic group. Since $d_G(H) \leqs 9$ if $b(G,H) \leqs 5$, we only need to consider the groups with $b(G,H) = 6$ or $7$, which we can read off from \cite{BOW}:
\begin{equation}\label{e:list1}
({\rm M}_{23},{\rm M}_{22}),\; ({\rm M}_{24},{\rm M}_{23}),\; ({\rm Co}_3, {\rm McL}.2),\; ({\rm Co}_2, {\rm U}_6(2).2),\; ({\rm Fi}_{22}.2, 2.{\rm U}_6(2).2).
\end{equation}
In each of these cases, we can access the character tables of $G$ and $H$ in the \textsf{GAP} Character Table Library \cite{GAPCTL} and this allows us to compute the corresponding inclusion matrix in each case (see Sections \ref{sss:inc} and \ref{sss:gap}). From here, it is a straightforward exercise to determine the minimal integer $d$ such that $M^{(d-1)}>0$ and by appealing to Proposition \ref{p:inc}(ii) we deduce that $d_G(H) = d$. In this way, one can check that $d_G(H) \leqs 11$ for each pair in \eqref{e:list1}, with equality if and only if $(G,H) = ({\rm M}_{24},{\rm M}_{23})$. This establishes part (ii) of Theorem \ref{t:ns}.

Part (iii) is also very similar. Here $G_0$ is a classical group and the main theorem of \cite{B07} gives $b(G,H) \leqs 5$, with equality if and only if $(G,H) = ({\rm U}_6(2).2, {\rm U}_4(3).2^2)$. So it just remains to show that $d_G(H) = 7$ in the latter case. We can do this using {\sc Magma}, following the approach described in Section \ref{sss:mag}.
More specifically, we first use the function \texttt{AutomorphismGroupSimpleGroup} to construct ${\rm Aut}(G_0) = G_0.S_3$ as a permutation group of degree $693$. We then identify $G = G_0.2$ as the unique index-three subgroup of ${\rm Aut}(G_0)$, up to conjugacy, and we use the function \texttt{MaximalSubgroups} to construct a conjugate of $H$ as a subgroup of $G$ in this representation. We can construct the character tables of $G$ and $H$, which as before allows us to determine the associated inclusion matrix $M$. It is then routine to check that $n = 6$ is the smallest integer with $M^{(n)}>0$ and we conclude that $d_G(H) = 7$ by applying Proposition \ref{p:inc}(ii).

\vs

This completes the proof of Theorem \ref{t:ns}.

\subsection{Proof of Theorem \ref{t:sn}}

Let $G$ be an almost simple group with socle $G_0 = A_n$ and let $H$ be a core-free subgroup of $G$ such that $H \cap G_0$ acts primitively on $\{1, \ldots, n\}$. Assume for now that $n \ne 6$, so $G = S_n$ or $A_n$, and let $K$ be maximal among core-free subgroups of $G$ containing $H$. Note that if $G = G_0$, then $K$ is a maximal subgroup of $G$, whereas $K$ is maximal in one of $G_0$ or $G$ when $G = S_n$ (for example, if $G = S_8$ and $H = {\rm AGL}_3(2)$, then $K = H$ is maximal in $G_0$, but not $G$). If we set $L = G_0$ or $G$ accordingly, then 
\begin{equation}\label{b:bds}
b(G,H) \leqs b(G,K) \leqs b(L,K)
\end{equation}
and we deduce that $d_G(H) = 3$ if $b(L,K) = 2$. Therefore, in view of \cite[Theorem 1.1]{BGS},  it follows that $d_G(H) = 3$ if $n \geqs 13$ and so the proof of Theorem \ref{t:sn} is reduced to the low degree groups with $n \leqs 12$.

The proof can now be completed with the aid of {\sc Magma}, working systematically through the short list of groups that require further attention. For example, suppose $G = S_n$ with $5 \leqs n \leqs 12$, and let $\mathcal{M}$ be a complete set of representatives of the $G$-classes of core-free maximal subgroups $K$ in $G$ and $G_0$, with the additional property that $K \cap G_0$ acts primitively on $\{1, \ldots, n\}$. Working with the natural permutation representation of $G$ in {\sc Magma}, we construct the character tables of $G$ and $K$, which we then use to determine $d_G(K)$ from the corresponding inclusion matrix. If $d = d_G(K) \geqs 4$, then we record the triple $(G,K,d)$ and we repeat the process, taking a set of representatives of the maximal subgroups of $K$ in order to descend deeper in to the subgroup lattice of $G$. We continue until we reach a subgroup with the property that all maximal subgroups have depth $3$ (at this point, in view of Proposition \ref{p:graph}(vi), there is no need to descend any further).

This iterative process is straightforward to implement in {\sc Magma} and it runs very efficiently for the groups we need to consider with $n \leqs 12$. An entirely similar analysis can be carried out if $G = A_n$, and similarly if $n = 6$ and $G = A_6.2 \ne S_6$ or $A_6.2^2$.

\vs

This completes the proof of Theorem \ref{t:sn}.

\subsection{Proof of Theorem \ref{t:sn2}}

First assume $G = S_n$ with $n \geqs 3$ and let $H$ be a subgroup of $G$. The cases $n \in \{3,4,5\}$ can be handled directly, so we may assume $n \geqs 6$, which means that $G$ is almost simple. For example, if $G = S_4$ and $H = D_8$, then $d_G(H) = 4$, as noted in Remark \ref{r:core}. In addition, recall that $d_G(H) = 1$ if $H = 1$ or $G$, while Proposition \ref{p:socle} gives $d_G(H) = 2$ if $H = A_n$. So we may assume $H$ is core-free. By \cite[Theorem 1(i)]{AB}, we have $b(G,H) \leqs n-1$ and thus $d_G(H) \leqs 2n-3$ by Proposition \ref{p:bases}(i), with equality if $H = S_{n-1}$ by \cite[Theorem 6.19]{BKK}.

It remains to show that $d_G(H) < 2n-3$ when $H \ne S_{n-1}$. 
As in \eqref{b:bds} we have $b(G,H) \leqs b(L,K)$, where $H \leqs K < L$ and $K$ is a core-free maximal subgroup of $L = S_n$ or $A_n$. In view of Proposition \ref{p:bases}(i) it suffices to show that $b(G,H) \leqs n-2$.

If $L = A_n$ then \cite[Theorem 1(i)]{AB} gives $b(L,K) \leqs n-2$, so we may assume $L = S_n$. If $K \ne S_{n-1}$, then  by inspecting the relevant literature on base sizes for primitive actions of $S_n$ (see Remark \ref{r:sn}, for example) we deduce that $b(L,K) \leqs n-2$. Now suppose $K = S_{n-1}$. If $H = A_{n-1}$, then it is easy to see that $b(G,H) = n-2$ and the result follows. Otherwise, $H$ is a core-free subgroup of $K$ and $b(K,H) \leqs n-2$ by \cite[Theorem 1(i)]{AB}, which in turn implies that $b(G,H) \leqs n-2$ as before.

For the remainder, let $G = A_n$. The desired result for $n \leqs 7$ is easy to check, so we may assume $n \geqs 8$. Set 
\[
f(n) = 2n-2\lceil \sqrt{n-1}\rceil -1.
\]
To begin with, let us assume $H$ is a maximal subgroup of $G$. We claim that $d_G(H) \leqs f(n)$, with equality if and only if $H = A_{n-1}$.

If $H$ acts primitively on $\{1, \ldots, n\}$, then by appealing to Theorem \ref{t:sn} it is straightforward to check that $d_G(H) < f(n)$ for all $n \geqs 8$. And if $H$ acts transitively and imprimitively, then the main theorem of \cite{MS} yields $b(G,H) \leqs \lfloor 2\log_2n +2\rfloor$ and thus $d_G(H) \leqs 2\lfloor 2\log_2n +2\rfloor-1$. This upper bound gives $d_G(H) < f(n)$ for all $n \geqs 14$. And for $8 \leqs n \leqs 13$ it is straightforward to check that $d_G(H) < f(n)$ in every case.

Next suppose $H = (S_k \times S_{n-k}) \cap G$ is an intransitive maximal subgroup of $G$, where $1 \leqs k < n/2$. 
If $k=1$, then $H = A_{n-1}$ and \cite[Proposition A.5]{BKK} gives $d_G(H) = f(n)$. Now assume $k \geqs 2$. We claim that $d_G(H) < f(n)$. 

The groups with $n \leqs 12$ can be checked directly, so let us assume $n \geqs 13$. If $k \leqs \sqrt{n}$, then \cite[Theorem 3.2]{Halasi} states that 
\[
b(G,H) \leqs \left\lceil \frac{2n-2}{3} \right\rceil
\]
and by applying the upper bound on $d_G(H)$ in Proposition \ref{p:bases}(i) we get $d_G(H) < f(n)$ for all $n \geqs 13$. Now assume $k>\sqrt{n}$. Here \cite[Corollary 4.3]{Halasi} provides the upper bound
\begin{equation}\label{e:b1}
b(G,H) \leqs \lceil \log_tn\rceil (t-1),
\end{equation}
where $t = \lceil n/k \rceil$. As a consequence, it follows that
\[
d_G(H) \leqs 2\lceil \log_2n \rceil \cdot (\left\lceil \sqrt{n}\right\rceil-1)-1
\]
and one can check that this upper bound is less than $f(n)$ for all $n \geqs 44$. To handle the remaining groups we work directly with the upper bound in \eqref{e:b1}, which gives 
\[
d_G(H) \leqs 2\lceil \log_tn\rceil (t-1) - 1 = g(n,k),
\]
and once again it is easy to check that $g(n,k) < f(n)$ for all $8 \leqs n \leqs 43$ and all $\sqrt{n}<k<n/2$.

In view of Proposition \ref{p:graph}(vi), to complete the proof for $G = A_n$ it suffices to show that $d_G(H) < f(n)$ when $H<K<G$ and $H$ is a maximal subgroup of $K = A_{n-1}$. If $H = A_{n-2}$ then $b(G,H) = \lfloor (n-1)/2 \rfloor$ and thus $d_G(H) \leqs n-2 < f(n)$. In each of the remaining cases, $H$ either acts primitively on $\{1, \ldots, n-1\}$, or it is transitive and imprimitive, or it is intransitive of the form $(S_k \times S_{n-1-k}) \cap K$ for some $2 \leqs k <(n-1)/2$. As above, we can now work with a suitable upper bound on $b(K,H)$, which immediately yields an upper bound on $b(G,H)$ and hence an upper bound on $d_G(H)$ via Proposition \ref{p:bases}(i). In this way, it is straightforward to verify the bound $d_G(H)<f(n)$ and the result follows.

\vs

This completes the proof of Theorem \ref{t:sn2}.

\begin{rem}
Note that Theorem \ref{t:sn2} is stated in terms of $S_n$ and $A_n$. For completeness, let us record that if $G$ is any almost simple group with socle $A_6$, then
\[
\max\{d_G(H) \,:\, H \leqs G\} = \left\{\begin{array}{ll}
9 & \mbox{if $G = S_6$} \\
7 & \mbox{if $G = A_6.2^2$} \\
5 & \mbox{if $G = A_6$, ${\rm PGL}_2(9)$ or ${\rm M}_{10}$.}
\end{array}\right.
\]
\end{rem}

\subsection{Proof of Theorem \ref{t:class}}

As in the theorem, let $G$ be an almost simple classical group with socle $G_0$ and let $r$ be the (untwisted) Lie rank of $G_0$ (so $r$ is the rank of the ambient simple algebraic group). Let $H$ be a core-free subgroup of $G$ and let $K$ be a maximal core-free subgroup of $G$ containing $H$. Note that $K$ is a maximal subgroup of some almost simple group $L \leqs G$ with socle $G_0$. Then \eqref{b:bds} holds and it suffices to show that $b(L,K) \leqs 2r+15$.

If $K$ is a \emph{non-subspace} subgroup of $L$ (see \cite[Definition 2.1]{B07}), then the main theorem of \cite{B07} yields $b(L,K) \leqs 5$, so we may assume $K$ is a \emph{subspace}  subgroup. Let $n$ be the dimension of the natural module for $G_0$ and note that $n \leqs 2r+1$. 

If $L = G_0$, then by combining the bounds in \cite[Theorem 3.3, Proposition 3.5]{HLM}, we deduce that $b(L,K) \leqs n+11 \leqs 2r+12$. In the general case, by arguing as in the proof of \cite[Theorem 3.1]{HLM}, using the fact that $L/G_0$ is a solvable group possessing a normal series of length at most $3$ with cyclic factors, we get $b(L,K) \leqs n+14 \leqs 2r+15$. The result now follows by appealing to Proposition \ref{p:bases}(i).

\vs

This completes the proof of Theorem \ref{t:class}.

\section{Sporadic groups: Proof of Theorems \ref{t:main0}-\ref{t:main2}}\label{s:sporadic}

In this section, we prove our main results on the depth of subgroups of almost simple sporadic groups. In particular, we establish Theorems \ref{t:main0}, \ref{t:main} and \ref{t:main2}.

\subsection{The case $G \ne \mathbb{B}$}\label{ss:nb}

As noted in Section \ref{s:intro}, the Baby Monster $\mathbb{B}$ requires special attention and we will handle it separately in Section \ref{ss:bm}. So throughout this section, we will assume $G \ne \mathbb{B}$. 

To begin with, we will focus on the simple sporadic groups and our initial goal is to prove Theorems \ref{t:main0} and \ref{t:main} when $G \ne \mathbb{B}$ is simple. To do this, it will be 
convenient to divide the possibilities for $G$ into the following three collections:
\begin{align*}
\mathcal{A} & = \{ {\rm M}_{11}, {\rm M}_{12}, {\rm M}_{22}, {\rm M}_{23}, {\rm M}_{24}, {\rm J}_{1},  {\rm J}_{2},  {\rm J}_{3}, {\rm HS}, {\rm He}, {\rm Ru}, {\rm McL}, {\rm Suz}, {\rm Co}_3, {\rm Co}_2, {\rm Co}_1, \\
& \hspace{7mm} {\rm Fi}_{22}, {\rm Fi}_{23}, {\rm O'N}, {\rm HN} \}  \\
\mathcal{B} & = \{ {\rm Ly}, {\rm Th}, \mathbb{M}\} \\
\mathcal{C} & = \{ {\rm Fi}_{24}', {\rm J}_4 \}
\end{align*}

\begin{prop}\label{p:1}
The conclusions to Theorems \ref{t:main0} and \ref{t:main} hold when $G \in \mathcal{A}$.
\end{prop}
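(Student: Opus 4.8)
The plan is to proceed entirely computationally, since every group $G \in \mathcal{A}$ admits a permutation or matrix representation of manageable degree, and the character tables of $G$ and all of its maximal subgroups are either available in the \textsf{GAP} Character Table Library \cite{GAPCTL} or can be constructed in {\sc Magma} \cite{magma} using the methods described in Section \ref{ss:comp}. First I would fix a convenient faithful representation of each $G \in \mathcal{A}$ (using \texttt{AutomorphismGroupSimpleGroup} or an explicit representation from the Web Atlas \cite{WebAt}), and work through a complete set $\mathcal{M}_1$ of representatives of the conjugacy classes of maximal subgroups $H$. For each such $H$ with $|H|^2 < |G|$ I would first attempt a random search for $x \in G$ with $H \cap H^x = 1$; if successful then $b(G,H) = 2$, so $d_G(H) = 3$ by Proposition \ref{p:bases}(ii). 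Otherwise I would compute the inclusion matrix $M$ for the embedding of $H$ in $G$ (by inducing ${\rm Irr}(H)$ to $G$ via the relevant class fusion) and determine the minimal $d$ with $M^{(d-1)} > 0$; by Proposition \ref{p:inc}(ii) this is $d_G(H)$. This establishes Theorem \ref{t:main0} for $G \in \mathcal{A}$: one reads off that either $d = 3$ or $(G,H,d)$ appears in Table \ref{tab:max}, and in particular $d \leqs 11$ with equality exactly for $({\rm M}_{24}, {\rm M}_{23})$.

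Next I would address Theorem \ref{t:main} by descending through the subgroup lattice. In view of Proposition \ref{p:graph}(vi), once a subgroup $K$ has $d_G(K) = 3$ every subgroup of $K$ also has depth $3$, so I only need to push deeper below those maximal subgroups $H$ found to have $d_G(H) \geqs 4$. For each such $H$ I would compute representatives of the $H$-classes of maximal subgroups $H_2$ of $H$ (these are the members of $\mathcal{M}_2$ lying in $H$), construct the character tables of $G$ and $H_2$ as needed, compute the inclusion matrix and hence $d_G(H_2)$, and record any triple with $d_G(H_2) \geqs 4$ in Table \ref{tab:main2}. For those $H_2 \in \mathcal{M}_2$ with $d_G(H_2) \geqs 4$ I would repeat the process one layer further to obtain the members of $\mathcal{M}_3$, expecting to find only cases with $d = 5$ (recorded in Table \ref{tab:main3}), and then verify that every maximal subgroup of every such $H_3$ has depth $3$, so that no $\mathcal{M}_k$ with $k \geqs 4$ contributes anything with depth exceeding $3$. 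Compiling these layers yields exactly the four alternatives in the statement of Theorem \ref{t:main} (with alternative (iv) vacuous here since $G \ne \mathbb{B}$).

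A few groups in $\mathcal{A}$ will require more care than the generic case. For the larger groups (${\rm Co}_1$, ${\rm Fi}_{22}$, ${\rm Fi}_{23}$, ${\rm HN}$, and to a lesser extent ${\rm Suz}$, ${\rm Co}_2$, ${\rm Ru}$) it may not be feasible to construct all maximal subgroups directly, so I would instead rely on the stored character tables and class fusions in \cite{GAPCTL} (accessed via \texttt{Maxes} and \texttt{NamesOfFusionSources}), falling back on \texttt{PossibleClassFusions} when a fusion map is not stored — in the latter situation one computes a candidate inclusion matrix $M_i$ for each admissible fusion $f_i$ and checks that the resulting $d_i$ all agree, which pins down $d_G(H)$. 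In the handful of cases where the character table of a relevant second- or third-maximal subgroup is unavailable in \cite{GAPCTL}, I would construct it in {\sc Magma} from a permutation or matrix representation, convert it via \texttt{GAPTableOfMagmaFile}, and proceed as above. I expect the main obstacle to be precisely this bookkeeping for the few large groups where neither {\sc Magma}'s \texttt{MaximalSubgroups} nor a stored fusion map is directly available — resolving those cases cleanly (e.g.\ certifying the correct class fusion, or confirming that all candidate fusions give the same depth) is where the real work lies; everything else is a routine, if lengthy, iteration over the subgroup lattice.
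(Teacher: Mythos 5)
Your proposal is correct and follows essentially the same strategy as the paper: use the base-size results (the paper reads the cases with $b(G,H)\geqs 3$ off from \cite{BOW} rather than re-running a random search at the top layer, but this is the same reduction), compute $d_G(H)$ from the inclusion matrix via Proposition \ref{p:inc}(ii), and descend iteratively through the subgroup lattice using Proposition \ref{p:graph}(vi) to stop once depth $3$ is reached. The only cosmetic difference is that the paper handles all of $\mathcal{A}$ directly in {\sc Magma} via \texttt{AutomorphismGroupSimpleGroup} and \texttt{MaximalSubgroups}, reserving the \textsf{GAP} fusion-map machinery you describe for the groups in $\mathcal{B}$ and $\mathcal{C}$.
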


\begin{proof}
Let $H$ be a maximal subgroup of $G$ and set $d = d_G(H)$. We begin by inspecting \cite{BOW} in order to identify the possibilities for $H$ with $b(G,H) \geqs 3$, recalling that $d = 3$ if $b(G,H) = 2$ (see Proposition \ref{p:bases}(ii)).

Working with {\sc Magma} \cite{magma}, we use the function \texttt{AutomorphismGroupSimpleGroup} to construct $G$ as a permutation group and this allows us in each case to determine the character table of $G$. Next we obtain $H$ as a subgroup of $G$ in this representation, using the \texttt{MaximalSubgroups} function, and we construct the character table of $H$. We can then compute the inner product $\la \a^G,\chi \ra$ for all $\a \in {\rm Irr}(H)$, $\chi \in {\rm Irr}(G)$, which yields the inclusion matrix $M$ corresponding to the embedding of $H$ in $G$. From here, it is straightforward to identify the minimal integer $k \geqs 3$ with $M^{(k-1)} > 0$, where $M^{(k-1)}$ is defined as in \eqref{e:Mdef}, and by appealing to Proposition \ref{p:inc}(ii) we conclude that $d= k$.

If $d = 3$, then Proposition \ref{p:graph}(vi) implies that $d_G(K) = 3$ for every nontrivial subgroup $K \leqs H$. On the other hand, if $d \geqs 4$, then we record $(d,H)$ in Table \ref{tab:max} and we repeat the above process, working with a set of representatives of the $H$-classes of maximal subgroups $K$ of $H$. In practice, if the bound $|K|^2 < |G|$ is satisfied, then we first randomly search for an element $x \in G$ with $K \cap K^x = 1$. If this search is successful after a specified number of attempts, then $b(G,K) = 2$ and thus $d_G(K) = 3$. Otherwise, we construct the character table of $K$ and we then compute $d_G(K)$ via the inclusion matrix for the embedding of $K$ in $G$.

This iterative procedure is straightforward to implement in {\sc Magma} and in all cases we find that every nontrivial $4$-maximal subgroup of $G$ has depth $3$. In  fact, almost all $3$-maximal subgroups have depth $3$ (the exceptions are recorded in Table \ref{tab:main3}, which is located in Section \ref{s:tab}). 
\end{proof}

\begin{prop}\label{p:2}
The conclusions to Theorems \ref{t:main0} and \ref{t:main} hold when $G \in \mathcal{B}$.
\end{prop}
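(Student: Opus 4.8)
The plan is to proceed exactly as in the proof of Proposition \ref{p:1}, but replacing the direct {\sc Magma} computations with the \textsf{GAP}-based approach outlined in Section \ref{sss:gap}, since the three groups in $\mathcal{B} = \{{\rm Ly}, {\rm Th}, \mathbb{M}\}$ are too large to handle via a convenient permutation or matrix representation in {\sc Magma}. First I would inspect \cite{BOW} to identify the maximal subgroups $H$ with $b(G,H) \geqs 3$, since Proposition \ref{p:bases}(ii) immediately gives $d_G(H) = 3$ for the rest. For each remaining pair $(G,H)$, the character tables of $G$ and of $H$ (or of relevant intermediate subgroups) are available in the \textsf{GAP} Character Table Library \cite{GAPCTL}, together with the class fusion maps, so one can induce the irreducible characters of $H$ to $G$, form the inclusion matrix $M$, and compute the minimal $k$ with $M^{(k-1)} > 0$; by Proposition \ref{p:inc}(ii) this equals $d_G(H)$. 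As indicated in Remark \ref{r:gap}, the case $G = \mathbb{M}$, $H = 2.\mathbb{B}$ (giving $d_G(H) = 5$) is a concrete instance where this works and {\sc Magma} would not.

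For the passage from maximal to $k$-maximal subgroups, I would then descend through the subgroup lattice exactly as in Proposition \ref{p:1}: whenever $d_G(H) \geqs 4$ I record the triple in the relevant table and examine the maximal subgroups $K$ of $H$, using the fact from Proposition \ref{p:graph}(vi) that $d_G(K) \leqs d_G(H)$, so there is no need to descend past a layer where every subgroup has depth $3$. When the fusion map $K$-classes $\to$ $G$-classes is not stored in \cite{GAPCTL}, I would use \texttt{PossibleClassFusions} to get a list of candidate maps and compute the corresponding candidate inclusion matrices; if all candidates yield the same value of $k$, then $d_G(K) = k$, and otherwise the candidates at least bracket $d_G(K)$ between $\min_i d_i$ and $\max_i d_i$, which in practice will suffice. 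Where even the character table of $K$ is unavailable, a probabilistic bound via $Q(G,t)$ as in Proposition \ref{p:prob} (or the candidate-permutation-character variant $Q_j(G,t)$) gives $d_G(K) \leqs 2t-1$, and in particular $d_G(K) = 3$ whenever $Q(G,2) < 1$.

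The main obstacle is the Monster $\mathbb{M}$ itself, and to a lesser extent ${\rm Ly}$ and ${\rm Th}$: the character tables and fusion data needed for the larger maximal subgroups of $\mathbb{M}$ are not all directly available, so some pairs will require the combination of {\sc Magma} (to build a character table of $H$ from an explicit representation, converted via \texttt{GAPTableOfMagmaFile}) with \textsf{GAP} (to determine candidate fusions and run the inclusion-matrix or probabilistic computation), as described in Remark \ref{r:com}. I would expect, however, that for ${\rm Th}$ every nontrivial proper subgroup has depth $3$ (consistent with Theorem \ref{t:delta}), that for ${\rm Ly}$ only a short list of maximal subgroups has depth greater than $3$, and that for $\mathbb{M}$ the analysis terminates quickly once one reaches $3$-maximal subgroups, so that the tables referenced in Theorems \ref{t:main0} and \ref{t:main} capture all cases with $d_G(H) \geqs 4$. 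Carrying out and cross-checking these computations for $\mathbb{M}$ is the delicate part; everything else is a routine, if lengthy, application of the methods of Section \ref{ss:comp}.
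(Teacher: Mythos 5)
Your overall strategy (read off from \cite{BOW} the maximal subgroups with $b(G,H) \geqs 3$, compute inclusion matrices via the \textsf{GAP} Character Table Library, then descend through the subgroup lattice using Proposition \ref{p:graph}(vi)) matches the paper for ${\rm Th}$ and ${\rm Ly}$, and your treatment of those two groups would go through: for ${\rm Th}$ both relevant maximal subgroups turn out to have depth $3$, and for ${\rm Ly}$ the descent below $3.{\rm McL}.2$ is short and all the required tables are available via \texttt{Maxes}.

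The genuine gap is in your plan for the Monster. Having computed $d_{\mathbb{M}}(2.\mathbb{B}) = 5$, you propose to descend computationally through the subgroup lattice of $2.\mathbb{B}$, building character tables of its maximal subgroups in {\sc Magma} where necessary and falling back on \texttt{PossibleClassFusions} or the probabilistic bound $Q(G,2)<1$. This is not feasible: the maximal subgroups of $2.\mathbb{B}$ are governed by those of $\mathbb{B}$, and the paper itself explains (Remark \ref{r:main1}(b) and Remarks \ref{r:2e6}--\ref{r:co2}) that the character tables and fusion data for many of the relevant subgroups are simply not available --- this is precisely why several cases for the Baby Monster remain open. If your argument for $\mathbb{M}$ depended on such a descent, it would stall in the same way. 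The step you are missing is that the Monster is strongly base-two: by \cite[Theorem 3.1]{B23} (equivalently, \cite[Theorem 2]{BG23}, quoted in the introduction), $2.\mathbb{B}$ is, up to conjugacy, the \emph{only} core-free subgroup $K$ of $\mathbb{M}$ with $b(\mathbb{M},K) \geqs 3$. Hence every proper nontrivial subgroup $K$ not conjugate to $2.\mathbb{B}$ satisfies $b(\mathbb{M},K)=2$ and therefore $d_{\mathbb{M}}(K)=3$ by Proposition \ref{p:bases}(ii), and no descent below $2.\mathbb{B}$ is needed at all. Your closing expectation that "the analysis terminates quickly once one reaches $3$-maximal subgroups" is true, but only because of this prior base-size result, not because the computations you describe can be carried out.
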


\begin{proof}
Let $H$ be a maximal subgroup of $G$ and recall that we can read off the cases with $b(G,H) \geqs 3$ by inspecting \cite{BOW}.

First assume $G = {\rm Th}$ or $\mathbb{M}$, in which case the possibilities for $(G,H)$ are as follows:
\[
({\rm Th}, {}^3D_4(2){:}3), \; ({\rm Th}, 2^5.{\rm L}_5(2)), \; (\mathbb{M}, 2.\mathbb{B}).
\]
In each of these cases, we work with \textsf{GAP} \cite{GAP} and the \textsf{GAP} Character Table Library \cite{GAPCTL} to access the character tables of $G$ and $H$, together with the fusion map from $H$-classes to $G$-classes (we use the function \texttt{NamesOfFusionSources} to obtain the character table of $H$). We can then compute the inclusion matrix for the embedding of $H$ in $G$, and then by applying Proposition \ref{p:inc} we deduce that $d_G(H) = 3$ for the two cases with $G = {\rm Th}$ (and thus $d_G(K) = 3$ for every proper nontrivial subgroup $K$ of $G$), whereas $d_G(H) = 5$ when $G = \mathbb{M}$ and $H = 2.\mathbb{B}$. In the latter case, \cite[Theorem 3.1]{B23} gives $b(G,K) = 2$ for every proper nontrivial subgroup $K \ne H$ of $G$, which means that $d_G(K) = 3$.

For the remainder, we may assume $G = {\rm Ly}$. In this case, \cite{BOW} implies that $b(G,H) \geqs 3$ if and only if $H = G_2(5)$ or $3.{\rm McL}.2$. And using the above approach, working with \cite{GAPCTL} to compute the inclusion matrix, it is easy to check that $G_2(5)$ has depth $3$, whereas $3.{\rm McL}.2$ has depth $5$. 

Next let $K$ be a maximal subgroup of $H = 3.{\rm McL}.2$, so $K = 3.J$ for some maximal subgroup of $J$ of ${\rm McL}.2$ (note that ${\rm McL}$ is not a subgroup of $G$). By using the \textsf{GAP} function  \texttt{Maxes} with respect to $H$, we can retrieve the character table of $K$ and we can compute $\a^G = (\a^H)^G$ for all $\a \in {\rm Irr}(K)$. In this way, we deduce that either $d_G(K) = 3$, or $K = 3.{\rm McL}$ and $d_G(K) = 5$. Finally, let $L$ be a maximal subgroup of $K = 3.{\rm McL}$, so $L = 3.J$ with $J$ maximal in ${\rm McL}$. Using the \texttt{Maxes} function with respect to $K$, we get the character table of $L$ and the fusion map from $L$ to $K$. This allows us to compute $\a^G = ((\a^K)^H)^G$ for all $\a \in {\rm Irr}(L)$ and it is then easy to check that $d_G(L) = 3$ in every case.
\end{proof}

We are now in a position to complete the proofs of Theorems \ref{t:main0} and \ref{t:main}.

\begin{prop}\label{p:3}
The conclusion to Theorems \ref{t:main0} and \ref{t:main} hold when $G \in \mathcal{C}$.
\end{prop}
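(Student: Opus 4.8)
The proof follows the same iterative strategy as the proofs of Propositions \ref{p:1} and \ref{p:2}, but both groups in $\mathcal{C}$ present additional computational obstacles that require the specialized techniques described in Remarks \ref{r:web} and \ref{r:com}. The plan is to treat $G = {\rm Fi}_{24}'$ and $G = {\rm J}_4$ as separate cases. In each case, I first inspect \cite{BOW} to list the maximal subgroups $H$ with $b(G,H) \geqs 3$, since $d_G(H) = 3$ for all the others by Proposition \ref{p:bases}(ii). For each such $H$ I compute the inclusion matrix $M$ for the embedding of $H$ in $G$ and then read off $d_G(H)$ as the least $d$ with $M^{(d-1)} > 0$, using Proposition \ref{p:inc}(ii). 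Whenever $d_G(H) \geqs 4$ I record the triple in Table \ref{tab:max} and descend into the subgroup lattice, repeatedly passing to maximal subgroups and computing depths until every subgroup in the current layer has depth $3$; by Proposition \ref{p:graph}(vi) there is then no need to go deeper. The $2$-maximal and $3$-maximal subgroups with depth at least $4$ that turn up this way should be exactly those listed in Tables \ref{tab:main2} and \ref{tab:main3}, and I also confirm that every nontrivial $k$-maximal subgroup with $k \geqs 4$ has depth $3$.

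For $G = {\rm Fi}_{24}'$ the difficulty is that the {\sc Magma} \cite{magma} function \texttt{MaximalSubgroups} is not effective. As indicated in Remark \ref{r:web}, I instead work with the explicit standard generators in the Web Atlas \cite{WebAt}: I construct $J = N_L(H) = H.2$ inside $L = {\rm Fi}_{24} = G.2$, obtain $H$ as $J \cap G$, and then build the character tables of $G$ and $H$ in {\sc Magma} in order to form the inclusion matrix. For those $H$ whose character tables are available in the \textsf{GAP} Character Table Library \cite{GAPCTL} together with the relevant class fusion, I can bypass this and work entirely in \textsf{GAP}, inducing the irreducible characters of $H$ directly.

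For $G = {\rm J}_4$ the obstruction is that the character tables of several relevant subgroups --- in particular various subgroups of the maximal subgroup $2^{11}{:}{\rm M}_{24}$ --- are not stored in \cite{GAPCTL}. Following Remark \ref{r:com}, I construct these subgroups explicitly from a matrix or permutation representation, compute their character tables in {\sc Magma}, and convert the output into \textsf{GAP}-readable form via \texttt{GAPTableOfMagmaFile}. The class fusion is then recovered using \texttt{PossibleClassFusions}, which yields a finite list of candidate fusion maps and hence a finite list of candidate inclusion matrices $M_1, \ldots, M_s$; if $d_i$ denotes the least integer with $M_i^{(d_i-1)} > 0$, then $\min_i d_i \leqs d_G(H) \leqs \max_i d_i$, and in every case that arises the $d_i$ coincide, pinning down $d_G(H)$ exactly. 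Where even this is impractical, I fall back on the probabilistic bound of Proposition \ref{p:prob}, evaluating $Q(G,t)$ (or its candidates $Q_j(G,t)$) to force $d_G(H) \leqs 2t-1$, which in the relevant cases suffices to conclude $d_G(H) = 3$.

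The main obstacle is precisely this bookkeeping with incomplete character-theoretic data for ${\rm J}_4$: ensuring that the candidate fusion maps produced by \texttt{PossibleClassFusions} are tight enough to determine $d_G(H)$ unambiguously, and combining the inclusion-matrix method with the probabilistic estimates so that every nontrivial subgroup not already handled is shown to have depth $3$. Once this is done, comparing the surviving triples against Tables \ref{tab:max}, \ref{tab:main2} and \ref{tab:main3} completes the verification of Theorems \ref{t:main0} and \ref{t:main} for $G \in \mathcal{C}$.
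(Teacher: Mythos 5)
Your proposal is correct and follows essentially the same route as the paper: read off the maximal subgroups with $b(G,H)\geqs 3$ from \cite{BOW}, compute depths via inclusion matrices, descend iteratively through the subgroup lattice, and handle the two groups' computational obstructions exactly as in Remarks \ref{r:web} and \ref{r:com} (Web Atlas generators for ${\rm Fi}_{24}'$, and {\sc Magma}-to-\textsf{GAP} character table conversion with \texttt{PossibleClassFusions} for ${\rm J}_4$). The only cosmetic difference is that for the many small subgroups the paper certifies depth $3$ by random search for an element $x$ with $K\cap K^x=1$ rather than via the probabilistic bound $Q(G,2)<1$, but both establish $b(G,K)=2$ and are interchangeable here.
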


\begin{proof}
Here $G = {\rm Fi}_{24}'$ or ${\rm J}_4$ and we consider each possibility in turn.

\vs

\noindent \emph{Case 1. $G = {\rm Fi}_{24}'$}

\vs

Let $H$ be a maximal subgroup of $G$. From \cite{BOW}, we first note that $b(G,H) \geqs 3$ if and only if $H$ is one of the following:
\[
{\rm Fi}_{23}, \; 2.{\rm Fi}_{22}.2, \; (3 \times {\rm P\O}_{8}^{+}(3).3).2,\; 3^{1+10}.{\rm U}_5(2).2, \; \O_{10}^{-}(2),\; 3^7.\O_7(3), \;  2^{11}.{\rm M}_{24}.
\]
Working with the \textsf{GAP} Character Table Library, we can compute $d_G(H)$ via the corresponding inclusion matrix and we get $d_G(H) = 7,5,5,4,3,3,3$ in the respective cases.

We now switch to {\sc Magma} and we use the function \texttt{AutomorphismGroupSimpleGroup} to construct $A = {\rm Aut}(G) = G.2$ as a permutation group of degree $306936$. By taking the socle of $A$ we obtain $G$ and we then construct the character table of $G$ in {\sc Magma}. Let $H$ be one of the four maximal subgroups with $d_G(H)>3$:
\[
{\rm Fi}_{23}, \; 2.{\rm Fi}_{22}.2, \; (3 \times {\rm P\O}_{8}^{+}(3).3).2,\; 3^{1+10}.{\rm U}_5(2).2.
\]
Generators for $N_A(H) = H.2$ are given in the Web Atlas \cite{WebAt} in terms of a pair of  standard generators for $A$ and we obtain $H$ by intersecting with $G$. We can then use \texttt{MaximalSubgroups} to construct a set of representatives of the $H$-classes of maximal subgroups $K$ of $H$. 

\vs

\noindent \emph{Case 1(a). $H = 3^{1+10}.{\rm U}_5(2).2$.}

\vs

Here $d_G(H) = 4$ and we claim that $d_G(K) = 3$ for every maximal subgroup $K$ of $H$. If $K \ne 3^{1+10}.{\rm U}_5(2)$, then by random search it is easy to show that $K \cap K^x = 1$ for some $x \in G$, which means that $b(G,K) = 2$ and $d_G(K) = 3$. And for $K = 3^{1+10}.{\rm U}_5(2)$ we can use {\sc Magma} to construct the character table of $K$, which then allows us to compute the inclusion matrix and we deduce that $d_G(K) = 3$, as claimed.

\vs

\noindent \emph{Case 1(b). $H = (3 \times {\rm P\O}_{8}^{+}(3).3).2$.}

\vs

Here $d_G(H) = 5$ and $H$ has $12$ conjugacy classes of maximal subgroups. For a representative $K$ of $7$ of these classes, it is easy to use random search to show that $b(G,K) = 2$ and thus $d_G(K) = 3$. The remaining possibilities for $K$ are as follows:
\[
{\rm P\O}_{8}^{+}(3).S_3 \, \mbox{(3 classes)},\; 3.{\rm P\O}_{8}^{+}(3).2,\; 3 \times {\rm P\O}_{8}^{+}(3).3.
\]

First assume $K$ represents one of the three classes of maximal subgroups of $H$ isomorphic to ${\rm P\O}_{8}^{+}(3).S_3$. By constructing the character table of $K$, we deduce that $d_G(K) = 3$ for two of these subgroups, while the third has depth $5$. Let $K_1 = {\rm P\O}_{8}^{+}(3).S_3$ be the latter subgroup with depth $5$. We can construct a set of representatives of the conjugacy classes of maximal subgroups of $K_1$; there are $9$ such classes and $7$ contain subgroups $L$ with $b(G,L) = 2$, which we can verify by random search. The remaining maximal subgroups are of the form ${\rm P\O}_8^{+}(3).2$ and ${\rm P\O}_8^{+}(3).3$; by constructing the relevant character tables, we find that the former have depth $3$, while the latter have depth $5$. So let $L_1 = {\rm P\O}_8^{+}(3).3$ be a maximal subgroup of $K_1$ with depth $5$. Then $L_1$ has $8$ classes of maximal subgroups; for $7$ of these subgroups $J$, it is easy to check that $b(G,J) = 2$ by random search; the remaining maximal subgroup is $J = {\rm P\O}_{8}^{+}(3)$ and we can show that $d_G(J) = 3$ by constructing the character table of $J$ and the corresponding inclusion matrix.

Next suppose $K = 3.{\rm P\O}_{8}^{+}(3).2$. Here $d_G(K) = 4$ and $K$ has $14$ classes of maximal subgroups. For a representative $L$ of $12$ of these classes, we use random search to show that $b(G,L) = 2$. The remaining two subgroups are
isomorphic to ${\rm P\O}_{8}^{+}(3).2$ and $3.{\rm P\O}_{8}^{+}(3)$, and in both cases we get $d_G(K) = 3$ by constructing the respective character tables and inclusion matrices.
  
Finally, suppose $K = 3 \times {\rm P\O}_{8}^{+}(3).3$. Here $d_G(K) = 5$ and $K$ has $11$ conjugacy classes of maximal subgroups $L$. Using random search, it is easy to check that $b(G,L) = 2$ when $L$ is contained in $7$ of these classes; this leaves us with $3$ classes of subgroups isomorphic to ${\rm P\O}_8^{+}(3).3$ and one class of subgroups of the form $3.{\rm P\O}_8^{+}(3)$. In each case, we construct the character table and inclusion matrix for $L$. In this way, we find that if $L \cong {\rm P\O}_8^{+}(3).3$ then either $d_G(L) = 3$, or $L$ is $G$-conjugate to the subgroup denoted $L_1$ above with $d_G(L_1) = 5$. And for $L = 3.{\rm P\O}_8^{+}(3)$ we get $d_G(L) = 3$. This completes our analysis of the subgroups of $(3 \times {\rm P\O}_8^{+}(3).3).2$. 

\vs

\noindent \emph{Case 1(c). $H = 2.{\rm Fi}_{22}.2$ or ${\rm Fi}_{23}$}

\vs

First assume $H = 2.{\rm Fi}_{22}.2$. Here $d_G(H) = 5$ and $H$ has $13$ conjugacy classes of maximal subgroups. The largest one is $K = 2.{\rm Fi}_{22}$ and by constructing the character table of $K$ we deduce that  $d_G(K) = 5$. In turn, $K$ itself has $14$ classes of maximal subgroups $L$ and by random search it is easy to check that $b(G,L) = 2$ in all cases. Similarly, if $K$ is one of the remaining maximal subgroups of $H$, then $b(G,K) = 2$ by random search.

Finally, suppose $H = {\rm Fi}_{23}$. In this case $d_G(H) = 7$ and $H$ has $14$ classes of maximal subgroups. For $12$ of these subgroups $K$, we get $b(G,K) = 2$ by random search. The remaining maximal subgroups are $K = 2.{\rm Fi}_{22}$ and ${\rm P\O}_{8}^{+}(3).S_3$, both of which are $G$-conjugate to subgroups we have already handled (namely, $2.{\rm Fi}_{22} < 2.{\rm Fi}_{22}.2$ and a depth $5$ maximal subgroup ${\rm P\O}_{8}^{+}(3).S_3$ of $(3 \times {\rm P\O}_8^{+}(3).3).2$).

\vs

\noindent \emph{Case 2. $G = {\rm J}_4$}

\vs

Finally, suppose $G = {\rm J}_4$. By the main theorem of \cite{BOW}, the maximal subgroups $H$ of $G$ with $b(G,H) \geqs 3$ are as follows:
\[
2^{11}{:}{\rm M}_{24}, \; 2^{1+12}.3.{\rm M}_{22}{:}2, \; 2^{10}{:}{\rm L}_5(2).
\]
We can retrieve the character tables of $G$ and $H$ via the \textsf{GAP} Character Table Library \cite{GAPCTL} and in this way we check that $d_G(H) = 3$ when $H = 2^{10}{:}{\rm L}_5(2)$, whereas $d_G(H) = 5$ in the other two cases. 

Suppose $H =  2^{11}{:}{\rm M}_{24}$ and let $K$ be a maximal subgroup of $G$. Working with {\sc Magma}, we use \texttt{MatrixGroup("J4",1)} to construct $G$ as a subgroup of ${\rm GL}_{112}(2)$ and we can use generators given in the Web Atlas \cite{WebAt} to construct $H$ as a subgroup of $G$ in this representation. We then use the function \texttt{LMGMaximalSubgroups} to construct a representative $K$ of each conjugacy class of maximal subgroups of $H$ (this is part of the \texttt{CompositionTree} {\sc Magma} package for computing with matrix groups; see \cite{CT}). There are $11$ conjugacy classes of such subgroups; excluding the three largest maximal subgroups, we can use random search to find an element $x \in G$ such that $K \cap K^x = 1$ (as before, this means that $b(G,K) = 2$ and thus $d_G(K) = 3$). In this way, we reduce to the following possibilities for $K$:
\[
2^{11}.{\rm M}_{23}, \; 2^{11}.{\rm M}_{22}.2, \; 2^{11}.2^4.A_8.
\]

Let us assume $K$ is one of these groups. To handle these cases, we first  construct the character table of $K$ in {\sc Magma}, using the function \texttt{LMGCharacterTable}. Moving over to \textsf{GAP}, we then use the function \texttt{GAPTableOfMagmaFile} to convert the table into \textsf{GAP}-readable form and we retrieve the character tables of $G$ and $H$ (for the latter, we use the \texttt{Maxes} function). After reading in the character table of $K$, we use  \texttt{PossibleClassFusions} to determine a set of candidate fusion maps from $K$-classes to $H$-classes. Fix a character $\a \in {\rm Irr}(K)$ and fix a possible fusion map. Then we can use the function \texttt{InducedClassFunctionsByFusionMap} to compute a candidate for $\a^H$, which we can then induce to $G$ using the stored fusion map from $H$-classes to $G$-classes.  In particular, for each candidate fusion map, we can construct a corresponding candidate inclusion matrix $M$ for the embedding of $K$ in $G$, one of which must be the genuine inclusion matrix. It is easy to check that $M^{(2)}>0$ for every $M$ and this allows us to conclude that $d_G(K) = 3$ (see Proposition \ref{p:inc}).

Finally, suppose $H =  2^{1+12}.3.{\rm M}_{22}{:}2$. The analysis here is very similar to the previous case. As before, we begin by using {\sc Magma} to construct $G$ and $H$ as subgroups of ${\rm GL}_{112}(2)$, and we use \texttt{LMGMaximalSubgroups} to construct a representative $K$ of each $H$-class of maximal subgroups of $H$. There are $8$ such classes and by excluding two possibilities for $K$, namely
\[
2^{1+12}.3.{\rm M}_{22},\;  2^{1+12}.3.{\rm L}_3(4).2,
\]
we can use random search to show that $b(G,K) = 2$ and thus $d_G(K) = 3$. For the remaining two subgroups, we construct the character table of $K$ in {\sc Magma}, convert to \textsf{GAP} form and we then compute a set of candidate inclusion matrices (for the embedding of $K$ in $G$), one for each possible fusion map from $K$-classes to $H$-classes. As before, for each such matrix $M$ we find that $M^{(2)}>0$ and we deduce that $d_G(K) = 3$.
\end{proof}

We are now ready to give a proof of Theorem \ref{t:main2}.

\begin{proof}[Proof of Theorem \ref{t:main2}]
As in the statement of the theorem, let $G = G_0.2$ be an almost simple sporadic group with socle $G_0$ and let $H$ be a proper nontrivial subgroup of $G$. If $H = G_0$, then $d_G(H) = 2$ by Proposition \ref{p:socle}, so we may assume $H$ is core-free. 

First assume $G \ne {\rm Fi}_{24}$. Here we can proceed as in the proof of Proposition \ref{p:1}, working with {\sc Magma} and the permutation representation of $G$ provided by the function \texttt{AutomorphismGroupSimpleGroup}. We first calculate the character table of $G$ and we obtain a set of representatives of the $G$-classes of core-free maximal subgroups $H$ of $G$ and $G_0$ with $b(G,H) \geqs 3$. In each case, we construct the character table of $H$ and we compute $d_G(H)$ from the corresponding inclusion matrix. If $d_G(H) \geqs 4$ then $(G,H)$ is recorded in Table \ref{tab:main41} or \ref{tab:main42}, and we repeat the process with respect to a set of representatives of the classes of maximal subgroups of $H$. And we continue to descend in the subgroup lattice of $G$ until every maximal subgroup we are considering has depth $3$.

It is straightforward to automate this iterative process in {\sc Magma} and all the relevant subgroups $H$ with $d_G(H) \geqs 4$ are listed in Tables \ref{tab:main41} or \ref{tab:main42} (each subgroup is recorded up to conjugacy in $G$). In the final column of each table, we also indicate whether or not $H$ is maximal in $G$, and if it is non-maximal then we present a chain
$H = H_0 < H_1 < \cdots < H_t$, where each $H_i$ is maximal in $H_{i+1}$, and $H_t$ is a maximal subgroup of $G$. Here it is perhaps worth noting that $G = {\rm Fi}_{22}.2$ has $59$ conjugacy classes of core-free subgroups $H$ with $d_G(H) \geqs 4$, including two $4$-maximal subgroups with $d_G(H)=5$ (see Remark \ref{r:t7}).

\vs

Finally, to complete the proof we may assume $G = {\rm Fi}_{24}$. Here we can proceed as in the proof of Proposition \ref{p:3} (see Case 1). 

First let $H$ be a maximal subgroup of $G$. By inspecting \cite{BOW} we see that we only need to consider the following possibilities for $H$:
\begin{equation}\label{e:list3}
{\rm Fi}_{23} \times 2, \; (2.{\rm Fi}_{22}.2).2, \; S_3 \times {\rm P\O}_{8}^{+}(3).S_3,\; {\rm O}_{10}^{-}(2),\; 3^7.{\rm SO}_7(3), \; 3^{1+10}.({\rm U}_5(2).2 \times 2)
\end{equation}
\begin{equation}\label{e:list4}
2^{12}.{\rm M}_{24}, \; (2 \times 2^2.{\rm U}_6(2)).S_3.
\end{equation}
In each case, using \textsf{GAP}, we can access the character tables of $G$ and $H$, together with the associated fusion map, and by computing the corresponding inclusion matrix we deduce that $d_G(H) = 9,5,5,5,5,5$ for the subgroups in \eqref{e:list3}, whereas $d_G(H) = 3$ for the two subgroups in \eqref{e:list4}. So to complete the proof, it remains to determine $d_G(K)$ when $K$ is contained in one of the $6$ maximal subgroups listed in \eqref{e:list3}.

To do this, we switch to {\sc Magma} and we construct $G$ as a permutation group of degree $306936$ via the function \texttt{AutomorphismGroupSimpleGroup} and we construct the character table of $G$. Next we use generators in the Web Atlas \cite{WebAt} to obtain each maximal subgroup $H$ in \eqref{e:list3} and from here we can proceed in the usual manner, as described in Section \ref{sss:mag}, using the function \texttt{MaximalSubgroups} to descend from one layer to the next in the subgroup lattice of $H$. As before, if we are seeking to compute the depth of a given subgroup $K$, then we first check to see if $|K|^2 < |G|$; if this inequality is satisfied,  then we use random search to seek an element $x \in G$ with $K \cap K^x = 1$, which if successful implies that $d_G(K) = 3$. Otherwise, we construct the character table of $K$ and we calculate $d_G(K)$ via the inclusion matrix. 

In total, we find that $G$ has exactly $27$ conjugacy classes of subgroups $H$ with $d_G(H) \geqs 4$, and they are all recorded in Tables \ref{tab:main41} and \ref{tab:main42} (and in the final column of each table, we present a maximal chain containing $H$, as described above).
\end{proof}

\vs

This completes the proof of Theorem \ref{t:main2}.

\subsection{The Baby Monster}\label{ss:bm}

In order to complete the proof of Theorems \ref{t:main0} and \ref{t:main}, we may assume $G = \mathbb{B}$ is the Baby Monster. 

\begin{prop}\label{p:b1}
The conclusion to Theorem \ref{t:main0} holds when $G = \mathbb{B}$.
\end{prop}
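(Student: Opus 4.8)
The plan is to reduce, via the base-size data of \cite{BOW}, to a short list of maximal subgroups and then compute the depth of each survivor from its inclusion matrix, exactly as in the proofs of Propositions \ref{p:1}--\ref{p:3}, but relying throughout on the \textsf{GAP} Character Table Library \cite{GAPCTL} since $\mathbb{B}$ admits no workable permutation or matrix representation. First I would inspect \cite{BOW} to read off the maximal subgroups $H$ of $\mathbb{B}$ with $b(\mathbb{B},H) \geqs 3$; for every other maximal subgroup Proposition \ref{p:bases}(ii) gives $d_{\mathbb{B}}(H) = 3$ and there is nothing further to do. This leaves a short list headed by the two large subgroups $A = 2.{}^2E_6(2).2$ and $2^{1+22}.{\rm Co}_2$, together with a handful of further subgroups such as ${\rm Fi}_{23}$, ${\rm Th}$, $(2^2 \times F_4(2)){:}2$ and $2^{9+16}.{\rm Sp}_8(2)$.

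For each $H$ on this list whose ordinary character table and class fusion into $\mathbb{B}$ are recorded in \cite{GAPCTL} (accessible via \texttt{NamesOfFusionSources} or \texttt{Maxes}), I would induce the irreducible characters of $H$ to form the inclusion matrix $M$, compute the powers $M^{(\ell)}$ as in \eqref{e:Mdef}, and locate the minimal $d$ with $M^{(d-1)} > 0$; Proposition \ref{p:inc}(ii) then gives $d_{\mathbb{B}}(H) = d$. Any $H$ with $d \geqs 4$ is recorded in Table \ref{tab:max}. In the cases where a usable class fusion is not stored outright I would proceed in two complementary ways. On the character-theoretic side, I would build the character table of $H$ (if necessary from a matrix representation of $H$ in {\sc Magma}, converted via \texttt{GAPTableOfMagmaFile}), use \texttt{PossibleClassFusions} to list the candidate fusion maps $f_1,\dots,f_s$, form the corresponding candidate inclusion matrices $M_i$ together with the integers $d_i$ minimal with $M_i^{(d_i-1)} > 0$, and use $\min_i d_i \leqs d_{\mathbb{B}}(H) \leqs \max_i d_i$, which pins down $d_{\mathbb{B}}(H)$ whenever the $d_i$ agree. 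On the probabilistic side, the permutation character $\chi$ of $\mathbb{B}$ on $\mathbb{B}/H$ (or the short list of candidates for it) is available, so evaluating $Q(\mathbb{B},t)$ from \eqref{e:QGt} and invoking Proposition \ref{p:prob} yields $d_{\mathbb{B}}(H) \leqs 2t-1$, and in particular $Q(\mathbb{B},2) < 1$ forces $d_{\mathbb{B}}(H) = 3$; combining the two bounds determines $d_{\mathbb{B}}(H)$ in every remaining case.

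The genuine obstacle is exactly the pair $A = 2.{}^2E_6(2).2$ and $2^{1+22}.{\rm Co}_2$: their character tables are large and the class fusion into $\mathbb{B}$ is not uniquely pinned down in the library, so the computation must be steered by \texttt{PossibleClassFusions} together with the permutation-character data — these are precisely the subgroups that needed the specialized methods of \cite{NNOW} for the base-size computation. The saving grace is that Theorem \ref{t:main0} only requires the \emph{maximal} subgroups themselves, not the (largely intractable) subgroup structure below them, and for the two maximal subgroups enough character-theoretic information \emph{is} available to carry the argument through; the deeper layers are deferred to part (iv) of Theorem \ref{t:main}.

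Assembling these computations shows that the only maximal subgroups $H$ of $\mathbb{B}$ with $d_{\mathbb{B}}(H) \geqs 4$ are those listed in Table \ref{tab:max}, with the stated values of $d$. Since $b(\mathbb{B},H) \leqs 3$ for every maximal subgroup $H$ of $\mathbb{B}$ appearing in this analysis, by \cite{BOW,NNOW}, Proposition \ref{p:bases}(i) gives $d_{\mathbb{B}}(H) \leqs 5$ throughout, so $\mathbb{B}$ contributes no case of depth $11$ and the clause ``$d \leqs 11$, with equality if and only if $G = {\rm M}_{24}$ and $H = {\rm M}_{23}$'' of Theorem \ref{t:main0} is respected. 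This will complete the proof.
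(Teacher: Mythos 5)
Your overall strategy is exactly the one the paper uses: read off from \cite{BOW,NNOW} the maximal subgroups $H$ of $\mathbb{B}$ with $b(\mathbb{B},H)\geqs 3$ (there are seven of them, including $2^{2+10+20}.({\rm M}_{22}{:}2\times S_3)$, which your illustrative list omits), then compute $d_{\mathbb{B}}(H)$ for each survivor from the inclusion matrix via the \textsf{GAP} Character Table Library, falling back on \texttt{PossibleClassFusions} and candidate inclusion matrices when the class fusion is not stored. One small factual slip: in \cite{GAPCTL} the fusion maps for $2.{}^2E_6(2).2$ and $2^{1+22}.{\rm Co}_2$ \emph{are} available, so those two cases go through directly; the only subgroup requiring the \texttt{PossibleClassFusions} workaround is $(2^2\times F_4(2)){:}2$, where all $64$ candidate matrices satisfy $M^{(2)}>0$ and give depth $3$.

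The genuine error is in your final paragraph. It is not true that $b(\mathbb{B},H)\leqs 3$ for every maximal subgroup: one has $b(\mathbb{B},2.{}^2E_6(2).2)=4$ (this is one of the cases settled in \cite{NNOW}), so Proposition \ref{p:bases}(i) only gives $d_{\mathbb{B}}(H)\leqs 7$ there, and indeed the computation yields $d_{\mathbb{B}}(2.{}^2E_6(2).2)=7$, exactly saturating that bound. Your claimed conclusion ``$d_{\mathbb{B}}(H)\leqs 5$ throughout'' is therefore false and, moreover, contradicts the depth-$7$ entry for $(\mathbb{B},2.{}^2E_6(2).2)$ that your own first sentence of that paragraph commits to recording in Table \ref{tab:max}. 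The ultimate point you need --- that $\mathbb{B}$ contributes no subgroup of depth $11$ --- survives, since $7<11$, but the justification must be the computed values $d\in\{3,5,7\}$ rather than the incorrect uniform bound $b\leqs 3$.
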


\begin{proof}
Let $H$ be a maximal subgroup of $G$. By inspecting \cite{BOW,NNOW}, we see that $b(G,H) \geqs 3$ if and only if $H$ is one of the following:
\[
2.{}^2E_6(2).2, \; 2^{1+22}.{\rm Co}_2, \; 2^{9+16}.{\rm Sp}_8(2), \; {\rm Fi}_{23}, \;  {\rm Th},  \; 2^{2+10+20}.({\rm M}_{22}{:}2 \times S_3), \; (2^2 \times F_4(2)){:}2.
\]
In every case, the character tables of $G$ and $H$ are available in the \textsf{GAP} Character Table Library \cite{GAPCTL}. In addition, if $H \ne (2^2 \times F_4(2)){:}2$ then the fusion map from $H$-classes to $G$-classes is also available in \cite{GAPCTL}, which allows us to calculate the corresponding inclusion matrix. In this way, we deduce that $d_G(H) = 7,5,5,3,3,3$ for the first six subgroups listed above.

Finally, suppose $H = (2^2 \times F_4(2)){:}2$. Here the fusion map is not known, but we can use the \textsf{GAP} function \texttt{PossibleClassFusions} to compute $64$ candidate fusion maps (one of which must be the correct map). As before, we work with   \texttt{InducedClassFunctionsByFusionMap} to compute a candidate for $\a^G$ as we range over the irreducible characters $\a$ of $H$. We can then construct a candidate inclusion matrix $M$ (one for each possible fusion map) and it is easy to check that $M^{(2)}>0$ in every case. We conclude that $d_G(H) = 3$. 
\end{proof}

\vs

This completes the proof of Theorem \ref{t:main0}.

\begin{prop}\label{p:b2}
The conclusion to Theorem \ref{t:main} holds when $G = \mathbb{B}$.
\end{prop}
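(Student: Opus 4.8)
The plan is to combine Proposition \ref{p:b1} with a descent through the subgroup lattice, stopping as soon as we reach a layer where every subgroup provably has depth $3$. By Proposition \ref{p:b1} and \cite{BOW,NNOW}, the maximal subgroups $H$ of $G = \mathbb{B}$ with $d_G(H) \geqs 4$ are precisely $A = 2.{}^2E_6(2).2$ (with $d_G(A) = 7$), $2^{1+22}.{\rm Co}_2$ (with $d_G(A)=5$) and $2^{9+16}.{\rm Sp}_8(2)$ (with $d_G = 5$); all other maximal subgroups have depth $3$, so by Proposition \ref{p:graph}(vi) every proper nontrivial subgroup contained in one of those has depth $3$. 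Hence it suffices to analyse the $2$-maximal subgroups lying inside these three maximal subgroups. First I would handle $2^{9+16}.{\rm Sp}_8(2)$: since the character table of this group is available in \cite{GAPCTL} (or can be constructed), I would run through its maximal subgroups $K$, computing the inclusion matrix for $K$ in $G$ via candidate class fusions as in Section \ref{sss:gap}, and check in each case that $M^{(2)} > 0$, so that $d_G(K) = 3$ by Proposition \ref{p:inc}(ii). I expect this case to close cleanly, after which parts (iii) and (iv) of Theorem \ref{t:main} reduce to the two remaining maximal subgroups $A = 2.{}^2E_6(2).2$ and $2^{1+22}.{\rm Co}_2$, which is exactly the content of Theorem \ref{t:main}(iv).

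Next I would treat $A = 2.{}^2E_6(2).2$. The goal here is to show that any $2$-maximal subgroup $K < A$ with $d_G(K) > 3$ must lie in a maximal parabolic $2.J < A$, so that the chain $H < 2.J < A < G$ predicted in Remark \ref{r:main1}(a) is forced, and moreover that all such $K$ have $d_G(K) \leqs 5$. For this I would enumerate the maximal subgroups of ${}^2E_6(2).2$ from the ATLAS, lift to subgroups of $A = 2.{}^2E_6(2).2$, and for each one either (a) verify $b(G,K) = 2$ by a random search for $x \in G$ with $K \cap K^x = 1$ — feasible when $|K|^2 < |G|$ — giving $d_G(K) = 3$ by Proposition \ref{p:bases}(ii), or (b) when $K$ is too large for (a), attempt to compute the inclusion matrix via \cite{GAPCTL} or a {\sc Magma}-to-\textsf{GAP} character-table transfer as in Remark \ref{r:com}. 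The parabolic subgroups of ${}^2E_6(2)$ are the ones where (a) fails and the character table is hardest to obtain; for these I would establish $d_G(K) \leqs 5$, e.g.\ by the probabilistic bound of Proposition \ref{p:prob} (show $Q(G,3) < 1$ using a candidate permutation character) when the full inclusion matrix is out of reach, and for smaller ones push one level further down to confirm depth $3$. A parallel analysis handles $2^{1+22}.{\rm Co}_2$: here I would work with its maximal subgroups of the form $2^{1+22}.L$ for $L$ maximal in ${\rm Co}_2$, dispatching most by random search and the few large ones ($L = {\rm U}_6(2).2$, etc.) by inclusion-matrix or probabilistic arguments, again obtaining $d_G(K) \leqs 5$ throughout.

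The main obstacle will be exactly the computational intractability of $\mathbb{B}$ that is flagged in Remark \ref{r:main1}(b): the Baby Monster has no small faithful permutation or matrix representation to work with directly in {\sc Magma}, and the character tables of several $2$-maximal subgroups of $A = 2.{}^2E_6(2).2$ and $2^{1+22}.{\rm Co}_2$ — in particular the parabolic ones — are not in \cite{GAPCTL}. So for those subgroups one cannot simply compute the inclusion matrix, and the argument must fall back on (i) the upper bound $d_G(K) \leqs d_G(A) \leqs 7$ from Proposition \ref{p:graph}(vi) together with whatever base-size or probabilistic information sharpens this to $\leqs 5$, and (ii) careful bookkeeping to show that no $3$-maximal subgroup below these escapes with depth $> 3$. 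This is why the conclusion for $G = \mathbb{B}$ takes the weaker form of parts (iii)--(iv): we can pin down the $\mathcal{M}_2$ and $\mathcal{M}_3$ cases outside $A$ and $2^{1+22}.{\rm Co}_2$ exactly, and bound everything inside those two subgroups by $5$, but an exact determination of every subgroup's depth remains open, mirroring the situation in \cite{BOW,NNOW} for base sizes.
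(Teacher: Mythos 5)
Your proposal is correct and follows essentially the same route as the paper: reduce to the three maximal subgroups with depth exceeding $3$, close the case $2^{9+16}.{\rm Sp}_8(2)$ by computing candidate inclusion matrices for its maximal subgroups and checking $M^{(2)}>0$, and bound everything inside the other two by $5$. The only differences are that the paper dispatches $2^{1+22}.{\rm Co}_2$ in one line via $d_G(A)=5$ and Proposition \ref{p:graph}(vi) (so no descent into its maximal subgroups is needed for the proposition itself), and for $A = 2.{}^2E_6(2).2$ it simply quotes the bound $b(G,B)\leqs 3$ from the proof of \cite[Theorem 4.1]{B23} for every maximal $B=2.J$ with $J\ne {}^2E_6(2)$, rather than re-deriving it via $Q(G,3)<1$ as you suggest.
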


\begin{proof}
Let $H$ be a nontrivial and non-maximal subgroup of $G$. In view of the previous proposition (and Proposition \ref{p:graph}(vi)), we may assume $H<A$, where $A$ is one of the following maximal subgroups of $G$:
\[
2.{}^2E_6(2).2, \; 2^{1+22}.{\rm Co}_2, \; 2^{9+16}.{\rm Sp}_8(2).
\]

\vs

\noindent \emph{Case 1. $A = 2.{}^2E_6(2).2$}

\vs

We begin by assuming $H<A$, where $A = 2.{}^2E_6(2).2$. Let $Z = Z(A) = C_2$ be the centre of $A$ and note that every maximal subgroup $B$ of $A$ contains $Z$, so $B = 2.J$ for some maximal subgroup $J$ of $A/Z = {}^2E_6(2).2$.

First assume $J = {}^2E_6(2)$. Here the character table of $B= 2.J$ (as well as the fusion map from $B$-classes to $A$-classes) is available in \cite{GAPCTL} via the \texttt{NamesOfFusionSources} function. This allows us to compute the inclusion matrix for the embedding of $B$ in $G$ and we deduce that $d_G(B) = 5$, so the pair $(G,B)$ is recorded in Table \ref{tab:main2}.

Now suppose $J \ne {}^2E_6(2)$. Then by inspecting the proof of \cite[Theorem 4.1]{B23}, we see that $b(G,B) \leqs 3$ and thus $d_G(B) \leqs 5$ by Proposition \ref{p:bases}. In view of Proposition \ref{p:graph}(vi), this allows us to conclude that if $H$ is \emph{any} subgroup of $G$, then either $d_G(H) \leqs 5$, or $H = 2.{}^2E_6(2).2$ and $d_G(H) = 7$. This completes the proof of the proposition in Case 1 (we refer the reader to Remark \ref{r:2e6} for further commentary on the remaining open cases arising here).

\vs

\noindent \emph{Case 2. $A = 2^{1+22}.{\rm Co}_2$}

\vs

Now assume $A = 2^{1+22}.{\rm Co}_2$. As noted in the proof of Proposition \ref{p:b1}, we have $d_G(A) = 5$ and thus $d_G(H) \leqs 5$ for every subgroup $H$ of $A$. This is already enough to prove Theorem \ref{t:main} for $G = \mathbb{B}$ with $H < A$, and we refer the reader to Remark \ref{r:co2} for a more detailed analysis of this case.

\vs

\noindent \emph{Case 3. $A = 2^{9+16}.{\rm Sp}_8(2)$}

\vs

Recall that $d_G(A) = 5$. We claim that $d_G(K) = 3$ for every maximal subgroup $K$ of $A$, which implies that $d_G(H) = 3$ for every proper nontrivial subgroup $H$ of $A$. 

To justify the claim, we first use 
\[
\verb|OneAtlasGeneratingSet("2^(9+16).S8(2)",Dimension,180).generators|
\]
to obtain a pair of generating matrices for $A$ in \textsf{GAP}, which gives a representation of $A$ as a subgroup of ${\rm GL}_{180}(2)$. Working with these matrices, we switch to {\sc Magma} and we construct $A$. Then using  \texttt{CompositionTree} \cite{CT} and the \texttt{LMGMaximalSubgroups} function, we construct a set of representatives of the $11$ conjugacy classes of maximal subgroups of $A$. 

For each such subgroup $K$, we use \texttt{LMGCharacterTable} to construct the character table of $K$, which we convert in to \textsf{GAP}-readable form (via the function \texttt{GAPTableOfMagmaFile}). We then work with \texttt{PossibleClassFusions} from $K$-classes to $H$-classes, which allows us to compute a collection of candidate inclusion matrices $M$ corresponding to the embedding of $K$ in $G$ (one of which is the correct inclusion matrix). In each case we check that $M^{(2)}>0$ and this allows us to conclude that $d_G(K) = 3$, as claimed.
\end{proof}

\vs

This completes the proof of Theorem \ref{t:main}.

\vs

Notice that Proposition \ref{p:b2} reduces the problem of determining the depth of every subgroup $H$ of $G = \mathbb{B}$ to the situation where $H < A <G$ and $A = 2.{}^2E_6(2).2$ or $2^{1+22}.{\rm Co}_2$, with $H \ne 2.{}^2E_6(2)$ in the former case. We conclude this section by presenting two extended remarks in order to explain how the analysis of these outstanding open cases can be pushed further.

\begin{rem}\label{r:2e6}
Let $G = \mathbb{B}$ and let $H$ be a nontrivial subgroup such that $H \leqs B < A <G$, where $A = 2.{}^2E_6(2).2$ and $B = 2.J$ with $J \ne {}^2E_6(2)$ maximal in ${}^2E_6(2).2$. As noted in the proof of Proposition \ref{p:b2}, we have $b(G,H) \leqs 3$ and thus $d_G(H) \leqs 5$. Let $Z = Z(A)$.

\begin{itemize}\addtolength{\itemsep}{0.2\baselineskip}
\item[{\rm (a)}] The possibilities for $J$ are described in \cite{Craven,Wil}. First, there are $4$ conjugacy classes of maximal parabolic subgroups:
\[
P_2 = [2^{21}].{\rm U}_6(2).2,\; P_{1,6} = [2^{24}].{\rm O}_{8}^{-}(2),\; P_4 = [2^{29}].({\rm L}_3(4) \times S_3).2,
\]
\[
P_{3,5} = [2^{31}].({\rm L}_3(2) \times A_5).2,
\]
and the remaining maximal subgroups are as follows:
\[
2 \times F_4(2),\; {\rm Fi}_{22}{:}2, \; {\rm O}_{10}^{-}(2), \; S_3 \times {\rm U}_6(2).2, \; S_3 \times \O_8^{+}(2).S_3, \; {\rm U}_3(8).6,
\]
\[
({\rm L}_3(2) \times {\rm L}_3(4).2).2, \; 3^{1+6}.2^{3+6}.3^2.2^2,\; {\rm O}_7(3), \; {\rm U}_3(2).2 \times {\rm U}_3(3).2.
\] 

\item[(b)] First assume $J = {\rm U}_3(8).6$. Let $\chi$ be the permutation character for the action of $G$ on $G/B$ and let $\a$ be the trivial character of $B/Z = J$. Then 
\[
\chi = {\rm Ind}^G_A{\rm Infl}^A_{A/Z}{\rm Ind}^{A/Z}_{B/Z} \a,
\]
where ${\rm Ind}^G_A \psi$ is the character of $G$ induced from the character $\psi$ of $A$, and ${\rm Infl}^A_{A/Z} \eta$ denotes the inflation of a character $\eta$ of $A/Z$ to $A$. Using \texttt{NamesOfFusionSources} (with respect to $A/Z = {}^2E_6(2).2$) we can retrieve the character table and fusion map in \textsf{GAP} for the embedding of $J$ in ${}^2E_6(2).2$. So using \textsf{GAP} we can compute $\chi$ and this allows us to show that $Q(G,2)<1$ with respect to the action of $G$ on $G/B$ (see \eqref{e:QGt} in Section \ref{sss:bases}). Then Proposition \ref{p:prob}(ii) yields $d_G(B) = 3$ and thus $d_G(H) = 3$ in this situation.

\item[(c)] For $J = {\rm O}_{10}^{-}(2)$, ${\rm O}_7(3)$ or ${\rm Fi}_{22}{:}2$, we can directly  access the character table of $J$ and by using \texttt{PossibleClassFusions} (with respect to the embedding of $J$ in ${}^2E_6(2).2$) we can compute a set of candidates for the permutation character corresponding to the action of $G$ on $G/B$. Then by arguing as in (b), we are able to deduce that $Q(G,2)<1$ and hence $d_G(H) = 3$. And the cases where $J$ is one of the following
\[
S_3 \times {\rm U}_6(2).2, \; S_3 \times \O_8^{+}(2).S_3,\; {\rm U}_3(2).2 \times {\rm U}_3(3).2
\]
can be handled in the same way, with the same conclusion. 

\item[(d)] We thank Thomas Breuer for calculating the character tables of $J = ({\rm L}_3(2) \times {\rm L}_3(4).2).2$ and $3^{1+6}.2^{3+6}.3^2.2^2$, which we can use to deduce that $Q(G,2)<1$ by arguing as above. So once again, $d_G(H) = 3$ in these cases.

\item[(e)] For $J = 2 \times F_4(2)$ we have $B = 2^2 \times F_4(2)$ and we can work directly with the character table of $B$ and a set of possible fusion maps with respect to the embedding of $B$ in $A$. In this way, we can determine a set of candidate inclusion matrices corresponding to $B<G$ and this allows us to deduce that $d_G(B) = 3$ and thus $d_G(H) = 3$.
\end{itemize}

To summarize, we have shown that $d_G(H) = 3$ for every proper nontrivial subgroup $H$ of $A = 2.{}^2E_6(2).2$, with the possible exception of subgroups $H \leqs 2.J < A$, where $J$ is a maximal parabolic subgroup of ${}^2E_6(2).2$. Here $d_G(H) \leqs 5$, but it remains an open problem to determine the exact depth of $H$.
\end{rem}

\begin{rem}\label{r:co2}
Now assume $G = \mathbb{B}$ and $H$ is a nontrivial subgroup such that $H \leqs B < A <G$, where $A = 2^{1+22}.{\rm Co}_2$ and $B$ is a maximal subgroup of $A$. We have $B = N.J$, with $N = 2^{1+22}$ and $J$ maximal in ${\rm Co}_2$. There are $11$ conjugacy classes of such subgroups in $A$. 

\begin{itemize}\addtolength{\itemsep}{0.2\baselineskip}
\item[{\rm (a)}] First assume that $J$ is one of the following:
\[
(2^4 \times 2^{1+6}).A_8, \; {\rm U}_4(3){:}D_8, \; 2^{4+10}.(S_5 \times S_3), \; {\rm M}_{23},\; 3^{1+4}.2^{1+4}.S_5,\; 5^{1+2}{:}4S_4.
\]
We claim that $b(G,B) = 2$ and thus $d_G(H) = 3$. To see this, we proceed as follows. 

Write $B = N.J$, where $N = 2^{1+22}$, and let $\chi$ be the permutation character corresponding to the action of $G$ on $G/B$. Define $Q(G,2)$ as in \eqref{e:QGt}. Let $\a$ be the trivial character of $B/N = J$ and observe that 
\begin{equation}\label{e:ind}
\chi = {\rm Ind}^G_H {\rm Ind}^H_B {\rm Infl}^B_{B/N} \a = {\rm Ind}^G_H{\rm Infl}^H_{H/N}{\rm Ind}^{H/N}_{B/N} \a.
\end{equation}
We can retrieve the character tables of $G$, $H$, $H/N$ and $B/N$ from the \textsf{GAP} Character Table Library \cite{GAPCTL}, together with all of the associated fusion maps on conjugacy classes. So this allows us to use \textsf{GAP} to compute $\chi$ and hence $Q(G,2)$. In this way, we deduce that $Q(G,2)<1$ and thus $b(G,B) = 2$ as claimed.

\item[{\rm (b)}] Next suppose $J = {\rm U}_6(2).2$. We claim that $d_G(B) = 5$. To see this, first recall that $b(G,A) = 3$, hence $b(G,B) \leqs 3$ and thus $d_G(B) \leqs 5$. To establish equality, let $\a_1,\a_2 \in {\rm Irr}(J)$ be the two linear characters of $J$, where $\a_1$ is the trivial character. Then as in (a) we can compute
\[
\chi_i = {\rm Ind}^G_H{\rm Infl}^H_{H/N}{\rm Ind}^{H/N}_{B/N} \a_i
\]
for $i=1,2$. Here $\chi_i = \b_i^G$ for distinct linear characters $\b_i$ of $B$, one of which is the trivial character, and using \textsf{GAP} we compute $\la \b_1^G, \b_2^G \ra = 0$. Therefore, Proposition \ref{p:graph}(v) implies that $d_G(B) \geqs 5$ and so we conclude that $d_G(B) = 5$, as claimed.

Next let $C$ be a maximal subgroup of $B$. To begin with, we assume $C$ contains $N$, which means that $C = N.K$ for some maximal subgroup $K$ of $J = {\rm U}_6(2).2$. There are $11$ distinct $B$-classes of such subgroups.

Suppose $K \ne {\rm U}_6(2)$ and let $\a$ be the trivial character of $K = C/N$. Then in each case, we compute the character table of $K$ in {\sc Magma}, working with a permutation representation of $J$ of degree $693$. We then convert the table to \textsf{GAP}-readable form and using \textsf{GAP} we determine the set of possible class fusion maps from $K$-classes to $J$-classes, recalling that the character table of $J = B/N$ is available in \cite{GAPCTL}. For each possible fusion map, we calculate the corresponding induced character $\psi = \a^{B/N}$ and we then use the expression in \eqref{e:ind} to compute a candidate permutation character for the action of $G$ on $G/C$. By calculating with each candidate permutation character in turn, we deduce that $Q(G,2)< 1$, which implies that $b(G,C) = 2$ and $d_G(C) = 3$.

Now assume $C = N.K$ with $K = {\rm U}_6(2)$ and let $D$ be a maximal subgroup of $C$. First assume $D$ contains $N$, so $D = N.J$ for some maximal subgroup $J$ of $K = {\rm U}_6(2)$. The character table of $J$ is available in \cite{GAPCTL} (via the \texttt{Maxes} function), together with the fusion map from $J$-classes to $K$-classes. Then by arguing as above, we can calculate the permutation character for the action of $G$ on $G/D$ and we check that $Q(G,2) < 1$, which gives $d_G(D) = 3$. Now suppose $D$ does not contain $N$, so $D = (D \cap N).K$. If we write $Z = Z(A)$, then $V = N/Z$ is a $22$-dimensional irreducible module for ${\rm Co}_2$ over $\mathbb{F}_{2}$ and we find that $V$ is uniserial as a module for $K = {\rm U}_6(2)$, with a $21$-dimensional maximal submodule. Then the maximality of $D$ in $C$ implies that $D$ is of the form $2^{1+21}.{\rm U}_6(2)$, and by arguing as above we deduce that every maximal subgroup of $D$ has depth $3$.

To continue the analysis of the case $J = {\rm U}_6(2).2$, we also need to consider the case where $H$ is contained in a maximal subgroup $C = (C \cap N).J$ of $B$. As in the previous paragraph, since the action of $J = {\rm U}_6(2).2$ on $V = N/Z$ is uniserial with a maximal submodule of dimension $21$, it follows that $C$ is of the form $2^{1+21}.{\rm U}_6(2).2$. In addition, our earlier work shows that every maximal subgroup of $C$ of the form $2^{1+21}.K$ with $K \ne {\rm U}_6(2)$ has depth $3$.

So to summarize, for $H \leqs B = 2^{1+22}.{\rm U}_6(2).2$, it just remains to determine $d_G(H)$ when $H$ is one of the following: 
\[
2^{1+22}.{\rm U}_6(2),\; 2^{1+21}.{\rm U}_6(2),\; 2^{1+21}.{\rm U}_6(2).2.
\]
Since $d_G(B) = 5$, it follows that $d_G(H) \in \{3,4,5\}$ but the character tables of these subgroups have not been computed and it remains an open problem to determine their exact depth.

\item[(c)] The analysis of the case $J = 2^{10}.{\rm M}_{22}.2$ is very similar to the previous one. Let $C$ be a maximal subgroup of $B$, and first assume that $C$ contains $N$, so $C = N.K$ with $K$ maximal in $J = 2^{10}.{\rm M}_{22}.2$. Since ${\rm M}_{22}.2$ acts irreducibly on the elementary abelian normal subgroup $2^{10}$, it follows that there are $8$ possibilities for $K$ up to conjugacy; either $K = {\rm M}_{22}.2$ or $K = 2^{10}.L$ with $L$ maximal in ${\rm M}_{22}.2$. In each case, we can argue as before, constructing character tables in {\sc Magma}, converting to \textsf{GAP}, taking possible class fusions and computing the corresponding permutation characters; in this way, we deduce that $Q(G,2)<1$ unless $C = 2^{1+22}.(2^{10}.{\rm M}_{22})$. We can then repeat the analysis to show that every maximal subgroup of $C$ of the form $N.L$ with $L$ maximal in $2^{10}.{\rm M}_{22}$ has depth $3$. And finally, noting that both $2^{10}.{\rm M}_{22}.2$ and $2^{10}.{\rm M}_{22}$ are uniserial on the $\mathbb{F}_2J$-module $V = N/Z$, with a $21$-dimensional maximal submodule, we conclude that the remaining open cases $H$ with $H \leqs B$ are the following:
\[
2^{1+22}.2^{10}.{\rm M}_{22}.2,\; 2^{1+22}.2^{10}.{\rm M}_{22},\; 2^{1+21}.2^{10}.{\rm M}_{22}.2,\; 2^{1+21}.2^{10}.{\rm M}_{22}.
\]
Since $d_G(A) = 5$, we have $d_G(H) \in \{3,4,5\}$ in each case, but we have not been able to determine the exact depth of $H$.

\item[(d)] Finally, let us assume $H \leqs B = N.J$ with $J = {\rm McL}$, $2^{1+8}.{\rm Sp}_6(2)$ or ${\rm HS}.2$. As above, let $V$ be the irreducible ${\rm Co}_2$-module $N/Z$.

Suppose $J = {\rm McL}$. Here the extension $N.J$ is non-split and the restriction of $V$ to $J$ is irreducible, whence every maximal subgroup of $B$ is of the form $C = N.K$ with $K$  maximal in $J$. Then by arguing as above, we can show that $b(G,C) = 2$ and so in this case it just remains to determine $d_G(H)$ when $H = B$.

Next assume $J  = 2^{1+8}.{\rm Sp}_6(2)$. Here we get $b(G,C) = 2$ for every maximal subgroup $C$ of the form $2^{1+22}.K$ with $K<J$ maximal. In addition, we note that the restriction of $V$ to $J$ is uniserial, with a maximal submodule of dimension $16$, and this observation allows us to conclude that either $d_G(H) = 3$, or $H=B$, or $H$ is a subgroup of $B$ of the form $2^{1+16}.J$. 

Similarly, for $J = {\rm HS}.2$ we can reduce the problem to the case where $H$ is either equal to $B$, or it is a subgroup of the form $H = 2^{1+21}.J$ (in particular, by calculating the permutation character for the action of $G$ on $G/C$, we can show that $b(G,C) = 2$ when $C$ is the index-two subgroup $2^{1+22}.{\rm HS}$ of $B$).
\end{itemize}

To summarize, we conclude that $d_G(H) = 3$ for every proper nontrivial subgroup $H$ of $A = 2^{1+22}.{\rm Co}_2$, with the possible exception of the following subgroups (with $N = 2^{1+22}$):
\begin{itemize}\addtolength{\itemsep}{0.2\baselineskip}
\item[{\rm (i)}] $N.{\rm U}_6(2)$, $2^{1+21}.{\rm U}_6(2)$, $2^{1+21}.{\rm U}_6(2).2$,
\item[{\rm (ii)}] $N.2^{10}.{\rm M}_{22}.2$, $N.2^{10}.{\rm M}_{22}$, $2^{1+21}.2^{10}.{\rm M}_{22}.2$, $2^{1+21}.2^{10}.{\rm M}_{22}$,
\item[{\rm (iii)}] $N.{\rm McL}$, $N.2^{1+8}.{\rm Sp}_6(2)$, $N.{\rm HS}.2$, $2^{1+16}.2^{1+8}.{\rm Sp}_6(2)$, $2^{1+21}.{\rm HS}.2$.
\end{itemize}
In each of these cases, we know that $d_G(H) \in \{3,4,5\}$, but it remains an open problem to determine the precise depth of $H$.
\end{rem}

\begin{rem}
In order to illustrate some of the difficulties with the remaining open cases arising in Remark \ref{r:co2}, let us consider the situation where  
$$H = N.{\rm U}_6(2) < B = N.{\rm U}_6(2).2 < A = N.{\rm Co}_2 < G = \mathbb{B}$$ 
and $N = 2^{1+22}$. As explained in part (b) of the previous remark, we know that  $d_G(B) = 5$. In addition, $|H|^2 > |G|$ and thus $b(G,H) = 3$ (in particular, we cannot try to force $d_G(H) = 3$ via Proposition \ref{p:prob}(ii) since we have $Q(G,2)>1$). If we write $Z = Z(A)$, then $A/Z = 2^{22}.{\rm Co}_2$ and we can use {\sc Magma} to calculate the character tables of $A/Z$ and $H/Z = 2^{22}.{\rm U}_6(2)$, working with a permutation representation of $A/Z$ of degree $4600$ (see \cite[p.20]{BMW}). In this way, we are able to check that $\la \a^G, \b^G \ra \ne 0$ for every pair of characters $\a,\b \in {\rm Irr}(H)$ that are inflated from $H/Z$. This is consistent with $d_G(H) = 3$, but further work is required in order to resolve this case. 
\end{rem}

\section{Proof of Theorems \ref{t:delta}-\ref{t:solv}}\label{s:solv}

In this section we give a proof of Theorem \ref{t:delta}, which classifies the finite simple groups $G$ with the property that $d_G(H) = 3$ for every proper nontrivial subgroup $H$. In addition,  we also study the depth of nilpotent and solvable subgroups of almost simple groups, which allows us to prove Theorems \ref{t:nilp} and \ref{t:solv}.

\subsection{Proof of Theorem \ref{t:delta}}

Let $G$ be a finite simple group and let $\delta(G)$ be the number of conjugacy classes of proper subgroups $H$ of $G$ with $d_G(H) \geqs 4$. Note that $\delta(G) = 0$ if and only if $d_G(H) = 3$ for every maximal subgroup $H$ of $G$. 

If $G$ is a sporadic group, then by inspecting Table \ref{tab:max} we see that $\delta(G) = 0$ if and only if $G = {\rm J}_1$, ${\rm J}_3$, ${\rm J}_4$ or ${\rm Th}$. And for $G = A_n$ and $H = A_{n-1}$, we recall that \cite[Proposition A.5]{BKK} gives $d_G(H) = 2n - 2\lceil \sqrt{n-1} \rceil - 1 \geqs 5$. For the remainder, we may assume $G$ is a simple group of Lie type over $\mathbb{F}_q$, where $q =p^f$ with $p$ a prime. We thank Gunter Malle for helpful discussions concerning the following argument.

It is easy to check that $\delta(G) > 0$ when $G$ is one of the following special cases:
\[
{\rm Sp}_4(2)' \cong A_6,\; G_2(2)' \cong {\rm U}_3(3), \; {}^2G_2(3)' \cong {\rm L}_2(8), \; {}^2F_4(2)'.
\]
So for the remainder, we may write $G = S/Z$, where $S$ is the simply connected version of $G$ and $Z = Z(S)$ (so for example, if $G = {\rm L}_n(q)$ then $S = {\rm SL}_n(q)$).

Let $B = N_S(U) = UT$ be a Borel subgroup of $S$, where $U$ is a Sylow $p$-subgroup of $S$ and $T$ is a torus. Set $H = B/Z$. For now, let us assume $T/Z$ is nontrivial, which means that we are excluding the following possibilities for $G$:
\begin{equation}\label{e:list2}
{\rm L}_n(2) \, \mbox{$(n \geqs 3)$}, \; {\rm Sp}_n(2) \, \mbox{$(n \geqs 6)$}, \; \Omega^{+}_n(2) \, \mbox{$(n \geqs 8)$}, \; F_4(2), \; E_6(2), \; E_7(2), \; E_8(2).
\end{equation}

Take two linear characters $\a,\b \in {\rm Irr}(T)$, where $\a$ is trivial and $\b$ is inflated from a nontrivial character of $T/Z$. By inflation, we may view both $\a$ and $\b$ as irreducible characters of $B$. Then $(T,\a)$ is a \emph{cuspidal pair} for $S$ (see \cite[Definition 3.1.14]{GM}) and the corresponding \emph{Harish-Chandra series}, denoted ${\rm Irr}(S,(T,\a))$, comprises the irreducible constituents of the induced character $\a^S = {\rm Ind}_B^{S}({\rm Infl}_T^B \a)$. Similarly, $(T,\b)$ is a cuspidal pair for $S$, which is not $S$-conjugate to $(T,\a)$, so \cite[Corollary 3.1.17]{GM} implies that ${\rm Irr}(S,(T,\a))$ and ${\rm Irr}(S,(T,\b))$ are disjoint. In other words, $\la \a^S,\b^S\ra = 0$. And since $Z$ is contained in the kernels of $\a$ and $\b$, we may view them as irreducible characters of $H = B/Z$, in which case $\la \a^G, \b^G \ra = 0$. Therefore, Proposition \ref{p:graph}(v) implies that $d_G(H) \geqs 5$ and thus $\delta(G)>0$.

It remains for us to consider the groups in \eqref{e:list2}. First assume $G$ is a classical group and let $H = QL$ be a maximal parabolic subgroup of $G$ of type $P_1$, which means that $H$ is the stabilizer in $G$ of a $1$-dimensional totally singular subspace of the natural module for $G$. Here $Q$ is the unipotent radical of $H$ and $L$ is a Levi subgroup. For $G = {\rm L}_n(2), {\rm Sp}_n(2), \Omega^{+}_n(2)$ we have $L = {\rm L}_{n-1}(2),{\rm Sp}_{n-2}(2),\O_{n-2}^{+}(2)$, respectively, and we take $\a,\b \in {\rm Irr}(H)$, where $\a$ is trivial and $\b = {\rm Infl}_L^H \textsf{St}$ is the inflation of the Steinberg character of $L$. Then 
\[
(\a^G)(1) = \left\{\begin{array}{ll}
(2^{n/2-1}+1)(2^{n/2}-1) & \mbox{if $G = \O_n^{+}(2)$} \\
2^n-1 & \mbox{otherwise}
\end{array}\right. 
\]
and
\[
\b(1) = |L|_2 = \left\{\begin{array}{ll}
2^{(n-1)(n-2)/2} & \mbox{if $G = {\rm L}_n(2)$} \\
2^{(n-2)^2/4} & \mbox{if $G = {\rm Sp}_n(2)$} \\
2^{(n-2)(n-4)/4} & \mbox{if $G = \O_n^{+}(2)$.} 
\end{array}\right.
\]
Clearly, if $(\a^G)(1) < \b(1)$ then $\la \a^G, \b^G \ra = \la (\a^G)_H, \b \ra = 0$ and thus $d_G(H) \geqs 5$ as before. It is routine to check that this inequality holds unless $G = {\rm L}_3(2)$, ${\rm L}_4(2)$, ${\rm Sp}_6(2)$ or $\O_8^{+}(2)$, and in each of these cases it is easy to verify the bound $d_G(H) \geqs 5$ directly, with the aid of \textsf{GAP} or {\sc Magma}.

Similarly, if $G = E_{r}(2)$ with $r \in \{6,7,8\}$ then we take $H = QL$ to be a maximal parabolic subgroup of type $P_r$, which means that the respective Levi subgroups are $L = \O_{10}^{+}(2)$, $E_6(2)$ and $E_7(2)$. We can now repeat the above argument for classical groups, taking the trivial character $\a$ of $H$ and the inflation $\b$ of the Steinberg character of $L$, noting that in every case we have $(\a^G)(1) < \b(1) = |L|_2$. Finally, if $G = F_4(2)$ then the character tables of $G$ and every maximal subgroup of $G$ are available in \cite{GAPCTL} and by computing the corresponding inclusion matrix it is easy to check that $d_G(H) = 7$ when $H$ is a maximal parabolic subgroup of type $P_1$.

\vs

This completes the proof of Theorem \ref{t:delta}.

\subsection{Proof of Theorem \ref{t:nilp}}

Let $G$ be an almost simple group and let $H$ be a nilpotent subgroup of $G$. As a special case of the main theorem of \cite{Zen21}, we have $b(G,H) \leqs 3$ and thus $d_G(H) \leqs 5$ by Proposition \ref{p:bases}(i). Moreover, if $G = S_8$ and $H$ is a Sylow $2$-subgroup of $G$, 
then it is easy to check that $b(G,H) = 3$ and $d_G(H) = 5$. This establishes part (i). Part (ii) follows immediately by combining the main theorems on $b(G,H)$ in \cite{Zenkov14,Zen20}, and (iii) follows from \cite[Corollary B]{BH1}. 

\vs

This completes the proof of Theorem \ref{t:nilp}.

\subsection{Proof of Theorem \ref{t:solv}}

Let $G$ be an almost simple group with socle $G_0$ and let $H$ be a solvable maximal subgroup of $G$. By the main theorem of \cite{B21}, we have $b(G,H) \leqs 5$ and thus $d_G(H) \leqs 9$. Of course, if $b(G,H) \leqs 4$ then $d_G(H) \leqs 7$, so it just remains for us to consider the pairs $(G,H)$ with $b(G,H) = 5$. By \cite[Theorem 2(i)]{B21}, one of the following holds:
\begin{itemize}\addtolength{\itemsep}{0.2\baselineskip}
\item[(a)] $G = S_8$ and $H = S_4 \wr S_2$;
\item[(b)] $G_0 = {\rm L}_4(3)$ and $H = P_2$;
\item[(c)] $G_0 = {\rm U}_5(2)$ and $H = P_1$.
\end{itemize}
Note that in (b) and (c), $H$ is a maximal parabolic subgroup of $G$ (more precisely, $P_m$ denotes the stabilizer in $G$ of a totally isotropic $m$-dimensional subspace of the natural module for $G_0$).

In (a), we can easily construct $G$ and $H$ in {\sc Magma} and by computing the character tables we can determine the corresponding inclusion matrix. In this way, we deduce that $d_G(H) = 9$. 

Cases (b) and (c) are similar. In (b), there are $5$ possibilities for $G$ (up to conjugacy in ${\rm Aut}(G)$) and we note that $H = N_G(H_0)$, where $H_0 = H \cap G_0$ and $|H_0| = 46656$. We can construct $G$ and $H$ in {\sc Magma}, using the functions \texttt{AutomorphismGroupSimpleGroup} and \texttt{MaximalSubgroups}, and by computing the corresponding character tables and inclusion matrices, it is easy to check that 
\[
d_G(H) = \left\{\begin{array}{ll}
9 & \mbox{if $G = {\rm L}_4(3).2_2$ or ${\rm Aut}({\rm L}_4(3)) = {\rm L}_4(3).2^2$} \\
7 & \mbox{otherwise,}
\end{array}\right.
\]
where ${\rm L}_4(3).2_2$ can be identified among the almost simple groups of the form ${\rm L}_4(3).2$ by the fact that it contains involutory graph automorphisms $x$ with $C_{G_0}(x) = {\rm PGSp}_4(3)$. 

Finally, in case (c) we have $H = N_G(H_0)$, where $H_0 = H \cap G_0$ has order $82944$,  and using {\sc Magma} once again we get
\[
d_G(H) = \left\{\begin{array}{ll}
8 & \mbox{if $G = {\rm U}_5(2)$} \\
7 & \mbox{otherwise}
\end{array}\right.
\]
and the result follows.

\vs

This completes the proof of Theorem \ref{t:solv}.

\vs

To conclude our study of the depth of solvable maximal subgroups, we present the following more refined version of Theorem \ref{t:solv}. 

\begin{thm}\label{t:solv2}
Let $G$ be a finite almost simple group with socle $G_0$, let $H$ be a solvable maximal subgroup of $G$ and set $d = d_G(H)$. Then one of the following holds:
\begin{itemize}\addtolength{\itemsep}{0.2\baselineskip}
\item[{\rm (i)}] $d \leqs 5$.
\item[{\rm (ii)}] $G_0 = {\rm L}_2(q)$, $H = P_1$ and $G \not\in \{ G_0, {\rm PGL}_2(q), \la G_0, \delta\phi^{f/2} \ra\}$ and $d \leqs 7$.
\item[{\rm (iii)}] $(G,H,d)$ is one of the cases in Table \ref{tab:solv} and we have $6 \leqs d \leqs 9$.
\end{itemize}
\end{thm}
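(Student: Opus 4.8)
The plan is to run the same base-size-to-depth machine used for the other families, specialised to solvable maximal subgroups and refined to pin down the precise value of $d$ whenever it exceeds $5$. By \cite[Theorem 1]{B21} we have $b(G,H)\leqs 5$, so Proposition~\ref{p:bases}(i) gives $d=d_G(H)\leqs 9$; moreover $b(G,H)\leqs 3$ forces $d\leqs 5$, which is case (i). Hence only $b(G,H)\in\{4,5\}$ needs attention, and here I would invoke the classification of base-four and base-five solvable maximal subgroups in \cite[Theorem 2]{B21}. The case $b(G,H)=5$ consists of exactly the three pairs already analysed in the proof of Theorem~\ref{t:solv}: $(S_8, S_4\wr S_2)$ with $d=9$, the ${\rm L}_4(3)$ cases with $H=P_2$ and $d\in\{7,9\}$, and the ${\rm U}_5(2)$ cases with $H=P_1$ and $d\in\{7,8\}$. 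In every instance $d\geqs 6$, so these are recorded in Table~\ref{tab:solv} and fall under (iii).

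For $b(G,H)=4$ the classification in \cite[Theorem 2]{B21} splits into a finite explicit list of pairs $(G,H)$ together with the family $G_0={\rm L}_2(q)$, $H=P_1$ with $G\notin\{G_0,{\rm PGL}_2(q),\la G_0,\delta\phi^{f/2}\ra\}$. For each pair in the finite list the group $G$ is small enough to build in {\sc Magma} \cite{magma} via \texttt{AutomorphismGroupSimpleGroup} and \texttt{MaximalSubgroups}, or to access through the Character Table Library \cite{GAPCTL}: I would compute the inclusion matrix and read off $d$ using Proposition~\ref{p:inc}(ii), or, when a permutation character is to hand, use the probabilistic bound of Proposition~\ref{p:prob}. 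Those with $d\geqs 6$ are placed in Table~\ref{tab:solv} (case (iii)) and the remainder have $d\leqs 5$ (case (i)).

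It then remains to treat the infinite family $G_0={\rm L}_2(q)$, $H=P_1$ a Borel subgroup, with $G$ as in part (ii). Here $b(G,H)=4$ (this is exactly the base-size dichotomy for the action on the projective line in \cite{B21}; see also \cite[Section~3]{HLM}), so Proposition~\ref{p:bases}(i) yields $d\leqs 7$, which is the assertion of (ii); for the three excluded extensions one checks $b(G,H)=3$ and hence $d\leqs 5$, covered by (i). To determine when $d\geqs 6$ I would argue as in the proof of Theorem~\ref{t:delta}: taking $\a$ to be the trivial character of $H$ and $\b$ the inflation to $H$ of a nontrivial character of $H/U$ (where $U$ is the unipotent radical of $H$), Harish--Chandra/Howlett--Lehrer theory for the (possibly disconnected) reductive group underlying $G$ shows that $\a^G$ and $\b^G$ are orthogonal, so Proposition~\ref{p:graph}(v) gives $d\geqs 5$ uniformly; combining this with a Clifford-theoretic reduction to the case $G=G_0$ — where $d_{G_0}(P_1)=5$, recovering the value in \cite{F1} — and a direct {\sc Magma}/\textsf{GAP} check for small $q$, one obtains the explicit (possibly $q$-parametrised) entries of Table~\ref{tab:solv} together with the bound $6\leqs d\leqs 9$ in (iii).

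The main obstacle is this last family: one must control, uniformly over all $q$ and all admissible extensions $G$, how the field and diagonal automorphisms interact with the Harish--Chandra decomposition of the characters of ${\rm SL}_2(q)$ — field automorphisms are precisely what push $b(G,P_1)$ up to $4$ by fixing the $\mathbb{F}_p$-rational points of the line — and to verify that the bound $d\leqs 7$ in (ii) is attained exactly where Table~\ref{tab:solv} records it. Assembling Table~\ref{tab:solv} also requires care to ensure that no pair with $6\leqs d\leqs 9$ is overlooked among the finitely many exceptional cases produced by \cite[Theorem~2]{B21}.
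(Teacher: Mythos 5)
Your proposal follows the paper's proof almost exactly: the reduction via $b(G,H)\leqs 5$ from \cite{B21}, the dispatch of $b(G,H)\leqs 3$ into case (i), the reuse of the three $b(G,H)=5$ configurations already computed in the proof of Theorem \ref{t:solv}, and the {\sc Magma}/inclusion-matrix computation for the finite list of $b(G,H)=4$ pairs read off from \cite[Theorem 2]{B21} are all precisely what the paper does. Your values for the $b=5$ cases ($d=9$ for $(S_8,S_4\wr S_2)$ and the two ${\rm L}_4(3)$ extensions, $d=8$ for ${\rm U}_5(2)$, $d=7$ otherwise) are correct.

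The one place where you diverge is your treatment of the family $G_0={\rm L}_2(q)$, $H=P_1$ with $b(G,H)=4$, and here you have misread the logical structure of the statement. Part (ii) is a deliberate quarantine: for these pairs the theorem asserts \emph{only} the bound $d\leqs 7$, which you have already obtained from $b(G,H)=4$ and Proposition \ref{p:bases}(i), and these pairs are \emph{not} entered into Table \ref{tab:solv} (the table is a fixed finite list; it contains no $q$-parametrised entries). The exact depth of a Borel subgroup of ${\rm L}_2(q).A$ in the $b=4$ regime is left as an open problem in the paper (see Remark \ref{r:solv2}(e), where only the computational evidence for $q\leqs 17^2$ is reported). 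Consequently the "main obstacle" you identify — controlling the interaction of field and diagonal automorphisms with the Harish--Chandra decomposition uniformly in $q$ so as to tabulate these cases — is not required for the theorem as stated, and indeed the argument you sketch would not suffice for it: the orthogonality $\la\a^G,\b^G\ra=0$ via cuspidal pairs only yields $d\geqs 5$ by Proposition \ref{p:graph}(v), which neither determines $d$ nor even certifies $d\geqs 6$. Since the rest of your argument establishes (i), (ii) and (iii) exactly as the paper does, the proof is complete once this superfluous step is dropped; just be aware that if you did intend Table \ref{tab:solv} to absorb the ${\rm L}_2(q)$ family, your proof of (iii) would be genuinely incomplete.
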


\begin{rem}\label{r:solv2}
Before giving the proof, let us record a few comments on the statement of Theorem \ref{t:solv2}.
\begin{itemize}\addtolength{\itemsep}{0.2\baselineskip}
\item[{\rm (a)}] In part (ii), $H = P_1$ is the stabilizer in $G$ of a $1$-dimensional subspace of the natural module for $G_0 = {\rm L}_2(q)$ (in other words, $H$ is a Borel subgroup of $G$, which coincides with the normalizer in $G$ of a Sylow $p$-subgroup of $G_0$, where $q = p^f$ and $p$ is a prime), $\delta$ is a diagonal automorphism of $G_0$ and $\phi$ is a field automorphism of order $f$. Note that $\delta = 1$ if $p = 2$.

\item[{\rm (b)}] In Table \ref{tab:solv}, each subgroup $H$ is recorded up to conjugacy in $G$ (note that $G = S_6$ has two classes of maximal subgroups $H$ with $H \cong S_4 \times S_2$, and as indicated in the table, we have $d_G(H) = 7$ in both cases). As before, if $G$ is a classical group, then we write $P_m$ for the stabilizer in $G$ of an $m$-dimensional totally singular subspace of the natural module $V$ for $G_0$. And for $G_0 = {\rm L}_n(q)$ we use $P_{1,n-1}$ for the stabilizer in $G$ of a flag $0 < U < W < V$, where $\dim U = 1$ and $\dim W = n-1$ (such a subgroup is maximal when $G \not\leqs {\rm P\Gamma L}_n(q)$). In addition, we adopt the standard Atlas \cite{Atlas} notation for classical groups of the form ${\rm L}_4(3).2$ and ${\rm U}_4(3).2$, which means that ${\rm L}_4(3).2_2$ contains involutory graph automorphisms $x$ with $C_{G_0}(x) = {\rm PGSp}_4(3)$, and ${\rm U}_4(3).2_1$ contains graph automorphisms $x$ with $C_{G_0}(x) = {\rm PO}_{4}^{-}(3)$.

\item[{\rm (c)}] Consider the cases in Table \ref{tab:solv} with $G_0 = {\rm P\O}_8^{+}(3)$ and $H = P_2$. Here $d_G(H) = 7$ if and only if $G = G_0.A$, where 
$A \in \{C_3,S_3, D_8, A_4, S_4\}$, or $A = C_2 \times C_2$ and $G = {\rm Inndiag}(G_0)$, or $A = C_2$ and $G$ contains a graph automorphism $x$ with $C_{G_0}(x) = \O_7(3)$.

\item[{\rm (d)}] Recall that if $G$ is a sporadic group, then the main theorem of \cite{B23} implies that $b(G,H) \leqs 3$ for \emph{every} solvable subgroup $H$ of $G$, in which case $d_G(H) \leqs 5$. And for $G \ne \mathbb{B}$, we can apply Theorems \ref{t:main0} and \ref{t:main} in order to read off the pairs $(G,H)$ with $d_G(H) \in \{4,5\}$.

\item[{\rm (e)}]  Suppose $G_0 = {\rm L}_2(q)$ and $H = P_1$ is a Borel subgroup. Then \cite[Proposition 4.1]{B21} gives $b(G,H) \in \{3,4\}$, with $b(G,H) = 4$ if and only if $G$ satisfies the conditions in part (ii) of Theorem \ref{t:solv2}. In the latter situation, it remains an open problem to determine the exact depth $d_G(H)$. Here it is worth noting that if $G = G_0$, then $b(G,H) = 3$ and \cite[Proposition 2.16]{F1} gives $d_G(H) = 5$. And with the aid of {\sc Magma}, we have checked that if $b(G,H) = 4$ and $q \leqs 17^2$, then $d_G(H) = 7$ in every case. So there is some evidence to suggest that $d_G(H) = 7$ if and only if $b(G,H) = 4$.
\end{itemize}
\end{rem}

\begin{proof}[Proof of Theorem \ref{t:solv2}]
By Proposition \ref{p:bases}(i), we know that (i) holds if $b(G,H) \leqs 3$, and we have already handled all the cases with $b(G,H) = 5$ in the proof of Theorem \ref{t:solv}. Therefore, we may assume $b(G,H) = 4$ and we can read off the possibilities for $(G,H)$ by inspecting Tables 4--7 in \cite{B21}. If $G_0 = {\rm L}_2(q)$ and $H = P_1$, then $b(G,H) = 4$ if and only if 
\[
G \not\in \{ G_0, {\rm PGL}_2(q), \la G_0, \delta\phi^{f/2} \ra\}
\]
as in part (ii) of the theorem, so we can exclude this case. 

We are left with a finite list of pairs $(G,H)$ to consider and in each case we can use {\sc Magma} to compute $d_G(H)$, following the procedure outlined in Section \ref{sss:mag}. As usual, we first use the functions \texttt{AutomorphismGroupSimpleGroup} and \texttt{MaximalSubgroups} to construct $G$ and $H$ with respect to a suitable permutation representation of $G$, and we then calculate the character tables of $G$ and $H$. This then allows us to compute the corresponding inclusion matrix and from this we quickly determine $d_G(H)$. The reader can check that every subgroup with $d_G(H) \geqs 6$ has been listed in Table \ref{tab:solv}, up to conjugacy in $G$.
\end{proof}

{\scriptsize
\begin{table}
\[
\begin{array}{cll} \hline
d & G & H  \\ \hline
9 & {\rm L}_4(3).2^2,\, {\rm L}_4(3).2_2 & P_2  \\
& &  \\
8 & {\rm U}_5(2) & P_1  \\
& &  \\
7 & S_8 & S_4 \wr S_2  \\
& A_6.2^2 & {\rm AGL}_1(9).2 \\
& S_6 & S_4 \times S_2 \, \mbox{(two)}, \, S_3 \wr S_2  \\
& S_5 & S_4  \\
& {\rm P\O}_8^{+}(3).A \mbox{ (see Remark \ref{r:solv2}(c))} & P_2 \\
& \O_8^{+}(2).A \ne \O_8^{+}(2) & P_2 \\ 
& {\rm SO}_7(3) & P_2 \\ 
& {\rm U}_5(2).2 & P_1  \\
& {\rm L}_4(3).2^2 & P_{1,3}  \\
& {\rm L}_4(3), \, {\rm L}_4(3).2_1, \, {\rm L}_4(3).2_3 & P_2  \\
& {\rm U}_4(3).2_1 & P_1  \\
& {\rm U}_4(2).2 & P_1, \, {\rm GU}_3(2).2, \, 3^3.S_4.2  \\
& {\rm U}_4(2) & P_1,\, {\rm GU}_3(2), \, 3^3.S_4  \\
& {\rm L}_3(4).D_{12} & P_{1,2}  \\
& {\rm L}_3(3) & P_1, \, P_2  \\
& &  \\
6 & A_8 & (S_4 \wr S_2) \cap G  \\
\hline
\end{array}
\]
\caption{The pairs $(G,H)$ in part (iii) of Theorem \ref{t:solv2}}
\label{tab:solv}
\end{table}
}

\section{The tables}\label{s:tab}

In this final section, we present the tables referred to in the statements of Theorems \ref{t:sn}, \ref{t:main0}, \ref{t:main} and \ref{t:main2}.

{\scriptsize
\begin{table}
\[
\begin{array}{clll} \hline
d & G & H &  \\ \hline
9 & S_6 & S_5 \, \mbox{(primitive)} & \mbox{maximal} \\
& & & \\
7 & S_8 & {\rm AGL}_3(2) & H < A_8 \\
& A_8 & {\rm AGL}_3(2) \, \mbox{(two)} & \mbox{maximal} \\
& A_6.2^2 & S_5 & H < S_6 \\
& & & \\
5 & S_{12} & {\rm M}_{12} & H < A_{12} \\ 
& A_{12} & {\rm M}_{12} \, \mbox{(two)} & \mbox{maximal} \\
& S_9 & {\rm L}_2(8).3 & H < A_9 \\
& A_9 & {\rm L}_2(8).3 \, \mbox{(two)} & \mbox{maximal} \\
& S_8 & {\rm PGL}_2(7) & \mbox{maximal} \\
& & {\rm L}_2(7) & H < {\rm PGL}_2(7) \\
& & 2^3{:}(7{:}3) & H < {\rm AGL}_3(2) < A_8 \\
& A_8 & {\rm GL}_3(2) & H < {\rm AGL}_3(2) \\
& & 2^3{:}(7{:}3) \, \mbox{(two)} & H < {\rm AGL}_3(2) \\
& S_7 & {\rm L}_2(7) & H < A_7 \\
& A_7 & {\rm L}_2(7) \, \mbox{(two)} & \mbox{maximal} \\
& A_6.2^2 & A_5 & H < S_5 < S_6 \\
& S_6 & A_5 & H < A_6 \\
& A_6.2 = {\rm M}_{10} & A_5 & H < A_6 \\
& A_6 & A_5 & \mbox{maximal} \\
& S_5 & 5{:}4 & \mbox{maximal} \\
& & D_{10} & H < 5{:}4 \\
& A_5 & D_{10} & \mbox{maximal} \\
& & & \\ 
4 & S_{10} & A_6.2^2 & \mbox{maximal} \\
\hline
\end{array}
\]
\caption{The pairs $(G,H)$ in part (ii) of Theorem \ref{t:sn}}
\label{tab:sn}
\end{table}
}

\begin{rem}\label{r:tabs}
Let us comment on the set-up we adopt in Tables \ref{tab:sn}-\ref{tab:main42}, as well as highlighting some of the specific cases that arise.
\begin{itemize}\addtolength{\itemsep}{0.2\baselineskip}
\item[(a)] In each table, the given subgroup $H$ is recorded up to conjugacy in $G$, and we indicate if there are two or more classes of subgroups of the same type. For example, $G = A_8$  has two classes of subgroups $H$ isomorphic to ${\rm AGL}_{3}(2)$ and in both cases $d_G(H) = 7$, as recorded in Table \ref{tab:sn}.

\item[(b)] In the final column of Table \ref{tab:sn}, we indicate whether or not $H$ is maximal in $G$. And if $H$ is not maximal, we present a chain of subgroups $H = H_k < H_{k-1} < \cdots < H_1$, where each $H_i$ is maximal in $H_{i-1}$, and $H_1$ is maximal in $G$. The same set-up is also adopted in Tables \ref{tab:main41} and \ref{tab:main42}.

\item[(c)] In Table \ref{tab:main2}, each subgroup $H$ in the third column is a $2$-maximal subgroup of $G$, and in the final column of the table we record a maximal subgroup $H_1$ of $G$, which  contains $H$ as a maximal subgroup. Similarly, the subgroups $H$ appearing in Table \ref{tab:main3} are $3$-maximal and we record subgroups $H_2$ and $H_1$ such that $H < H_2 < H_1$ is a maximal chain, with $H_1$ maximal in $G$ (in general, such a maximal chain is not unique).

\item[(d)] Consider the case $G = {\rm Co}_3$ in Table \ref{tab:main2}. The maximal subgroup ${\rm U}_4(3).2^2$ has three index-two subgroups, namely ${\rm U}_4(3).2_1$ and two copies of ${\rm U}_4(3).2_3$ (which are non-conjugate in $G$).
Here ${\rm U}_4(3).2_1$ has depth $5$ in $G$, whereas one copy of ${\rm U}_4(3).2_3$ has depth $5$ and the other has depth $4$. As before, our notation ${\rm U}_4(3).2_i$ is consistent with the Atlas \cite{Atlas}, which means that $H = {\rm U}_4(3).2_3$ contains an involutory graph automorphism $x$ with $C_{G_0}(x) = {\rm PO}_{4}^{-}(3)$.

\item[(e)] The group $G = {\rm Co}_2$ has two conjugacy classes of subgroups of the form $2^9.{\rm L}_3(4).2$, represented by $K_1$ and $K_2$. Here $K_1 \cong K_2$ and $d_G(K_i) = 5$ for $i=1,2$. In addition, $K_1 = P_3 < {\rm U}_6(2).2$ is $2$-maximal (so it is listed in Table \ref{tab:main2}), whereas $K_2 < 2^{10}.{\rm L}_3(4).2 < 2^{10}.{\rm M}_{22}.2$ is $3$-maximal and therefore appears in Table \ref{tab:main3}.

\item[(f)] Suppose $G = {\rm Fi}_{23}$. As indicated in Table \ref{tab:main3}, $G$ has a $3$-maximal subgroup $H = \O_7(3) \times 2$ of depth $5$, which is embedded in $G$ via the following maximal chain:
\[
\O_7(3) \times 2 < {\rm P\O}_{8}^{+}(3).2 < {\rm P\O}_{8}^{+}(3).S_3 < {\rm Fi}_{23}.
\]
Here the $\O_7(3)$ subgroup of $H$ has depth $3$ and we note that $H$ is not conjugate to the $2$-maximal subgroup $K = \O_7(3) \times 2 < \O_7(3) \times S_3$. Indeed, in the latter case, the $\O_7(3)$ subgroup of $K$ has depth $5$.

\item[(g)] Let $G = {\rm Fi}_{24}'$. The maximal subgroup $(3 \times {\rm P\O}_{8}^{+}(3).3).2$ has three conjugacy classes of maximal subgroups isomorphic to ${\rm P\O}_{8}^{+}(3).S_3$; two have depth $3$, while the third has depth $5$. The index-two subgroup ${\rm P\O}_{8}^{+}(3).3$ of the latter also has depth $5$, and it can also be viewed as a maximal subgroup of $3 \times {\rm P\O}_{8}^{+}(3).3$.
\end{itemize}
\end{rem}

{\scriptsize
\begin{table}[h]
\[
\begin{array}{clll} \hline
d & G & H &  \\ \hline
11 & {\rm M}_{24} & {\rm M}_{23} & \\ 
& & & \\
9 & {\rm M}_{23} & {\rm M}_{22} & \\
& {\rm Co}_3 & {\rm McL}.2 & \\
& {\rm Co}_2 & {\rm U}_6(2).2 & \\
& {\rm Fi}_{23} & 2.{\rm Fi}_{22} & \\
& & & \\
7 & {\rm M}_{11} & {\rm M}_{10} = A_6.2 & \\
& {\rm M}_{12} & {\rm M}_{11} \, \mbox{(two)} & \\
& {\rm M}_{22} & {\rm L}_3(4) & \\
& {\rm He} & {\rm Sp}_4(4).2 & \\
& {\rm Suz} & G_2(4) & \\
& {\rm HS} & {\rm M}_{22},\, {\rm U}_3(5).2 \, \mbox{(two)} & \\
& {\rm Co}_1 & {\rm Co}_2 & \\
& {\rm Fi}_{22} & 2.{\rm U}_6(2),\, \O_7(3) \, \mbox{(two)} & \\
& {\rm Fi}_{23} & {\rm P\O}_8^{+}(3).S_3 & \\
& {\rm Fi}_{24}' & {\rm Fi}_{23} & \\
& \mathbb{B} & 2.{}^2E_6(2).2 & \\
& & & \\
5 & {\rm M}_{11} & {\rm L}_2(11), \, {\rm M}_9{:}2 = {\rm U}_3(2){:}2  & \\
& {\rm M}_{12} & A_6.2^2 \, \mbox{(two)}, \, {\rm L}_2(11), \, {\rm AGL}_2(3) \, \mbox{(two)}, \, 2 \times S_5 &  \\
& {\rm M}_{22} & 2^4{:}A_6,\, A_7 \, \mbox{(two)}, \, 2^4{:}S_5,\, {\rm AGL}_3(2), \, {\rm L}_2(11) & \\
& {\rm M}_{23} & {\rm L}_3(4).2, \, 2^4{:}A_7, \,  A_8, \, {\rm M}_{11}, \, 2^4{:}(3 \times A_5){:}2 & \\
& {\rm M}_{24} & {\rm M}_{22}.2, \, 2^4.A_8,\, {\rm M}_{12}.2, \, 2^6{:}3.S_6,\, {\rm L}_3(4){:}S_3,\, 2^6{:}({\rm L}_3(2) \times S_3) & \\
& {\rm J}_2 & {\rm U}_3(3),\, 3.A_6.2,\, 2^{1+4}.A_5,\, 2^{2+4}{:}(3 \times S_3),\, A_5 \times D_{10} & \\

& {\rm He} & 2^2.{\rm L}_3(4).S_3, \, 2^6{:}3.S_6 \, \mbox{(two)} & \\

& {\rm McL} & {\rm U}_4(3), \,  {\rm M}_{22} \,\mbox{(two)},\, {\rm U}_3(5),\, 3^{1+4}{:}2.S_5,\, 3^4{:}{\rm M}_{10} & \\
& {\rm Suz} & 3.{\rm U}_4(3).2,\, {\rm U}_5(2),\, 2^{1+6}.{\rm U}_4(2),\, 3^5.{\rm M}_{11}, \, 2^{4+6}{:}3A_6 & \\
& {\rm HS} & S_8,\, 2^4.S_6,\, 4^3{:}{\rm L}_3(2),\, 4.2^4.S_5 & \\
& {\rm Ru} & {}^2F_4(2)'.2,\, 2^6.{\rm U}_3(3).2 &  \\
& {\rm Co}_3 & {\rm HS},\, {\rm U}_4(3).2^2,\, {\rm M}_{23}, \, 3^5{:}(2 \times {\rm M}_{11}),\, 2.{\rm Sp}_6(2),\, 3^{1+4}{:}4S_6 & \\
& {\rm Co}_{2} & 2^{10}{:}{\rm M}_{22}.2, \, {\rm McL},\, 2^{1+8}.{\rm Sp}_6(2),\, {\rm HS}.2, \, (2^4 \times 2^{1+6}).A_8,\, {\rm U}_4(3).D_8,\, 2^{4+10}.(S_5 \times S_3) & \\
& {\rm Co}_1 & 3.{\rm Suz}.2, \, 2^{11}.{\rm M}_{24},\, {\rm Co}_3, \, 2^{1+8}.\O_8^{+}(2),\, {\rm U}_6(2).S_3 & \\
& {\rm Fi}_{22} & \O_8^{+}(2).S_3,\, 2^{10}.{\rm M}_{22},\, 2^6.{\rm Sp}_6(2),\, 2.2^{1+8}.{\rm U}_4(2).2, \, {\rm U}_4(3).2 \times S_3,\, 2^{5+8}.(S_3 \times A_6) & \\
& {\rm Fi}_{23} & 2^2.{\rm U}_6(2).2, \, {\rm Sp}_8(2), \, \O_7(3) \times S_3, \, 2^{11}.{\rm M}_{23}, \, 3^{1+8}.2^{1+6}.3^{1+2}.2S_4 & \\
& {\rm O'N} & {\rm L}_3(7).2 \, \mbox{(two)} & \\
& {\rm Ly} & 3.{\rm McL}.2 & \\
& {\rm HN} & A_{12}, \, 2.{\rm HS}.2, \, {\rm U}_3(8).3 & \\
& {\rm Fi}_{24}' & (3 \times {\rm P\O}_{8}^{+}(3).3).2, \, 2.{\rm Fi}_{22}.2 & \\
& \mathbb{B} & 2^{1+22}.{\rm Co}_2,\, 2^{9+16}.{\rm Sp}_8(2) & \\
& \mathbb{M} & 2.\mathbb{B} & \\
& & & \\
4 & {\rm HS} & {\rm L}_3(4).2 & \\
& {\rm Fi}_{24}' & 3^{1+10}.{\rm U}_5(2).2 & \\ \hline
\end{array}
\]
\caption{The pairs $(G,H)$ in Theorem \ref{t:main0} with $d = d_G(H) > 3$}
\label{tab:max}
\end{table}
}

{\scriptsize
\begin{table}[h]
\[
\begin{array}{clll} \hline
d & G & H & H_1 \\ \hline
7 & {\rm Co}_3 & {\rm McL} & {\rm McL}.2 \\
& {\rm Co}_2 & {\rm U}_6(2) & {\rm U}_6(2).2 \\
& {\rm Fi}_{23} & {\rm P\O}_8^{+}(3).3 & {\rm P\O}_8^{+}(3).S_3 \\
& & & \\
5 & {\rm M}_{11} & A_6, \, S_3 \wr S_2 & {\rm M}_{10}, \, {\rm M}_9{:}2 \\
& {\rm M}_{12} & S_6 \, \mbox{(two)},\,  A_6.2 = {\rm PGL}_2(9) \, \mbox{(two)},\,A_6.2 = {\rm M}_{10} \, \mbox{(two)} & A_6.2^2 \\
& {\rm M}_{22} & A_4^2.4 & 2^4{:}A_6 \\
& {\rm M}_{23} & {\rm L}_3(4) & {\rm L}_3(4).2 \\
& {\rm M}_{24} & 2^6{:}3.A_6,\, 2^6{:}3.S_5 \, \mbox{(two)},\, 2^6{:}3.(S_3 \wr S_2), \, 2^6{:}3.(S_4 \times S_2) & 2^6{:}3.S_6 \\
& & 2^4.A_7,\, 2^4.{\rm AGL}_3(2)  & 2^4.A_8  \\
& & 2^6{:}({\rm L}_3(2) \times 3),\, 2^6{:}({\rm L}_3(2) \times 2) & 2^6{:}({\rm L}_3(2) \times S_3) \\
&& {\rm M}_{22}, \, {\rm M}_{12} & {\rm M}_{22}.2,\, {\rm M}_{12}.2  \\
& {\rm J}_2 & 3.A_6,\, 2^{2+4}.3^2 & 3.A_6.2, \, 2^{2+4}{:}(3 \times S_3) \\
& {\rm He} & {\rm Sp}_4(4) & {\rm Sp}_4(4).2 \\
& {\rm McL} & 3^4.A_6,\, 3^{1+4}{:}2.A_5 & 3^4.{\rm M}_{10},\, 3^{1+4}{:}2.S_5 \\
& {\rm Suz} & 3.{\rm U}_4(3) & 3.{\rm U}_4(3).2 \\
& {\rm HS} & {\rm U}_3(5) \, \mbox{(two)}, \, S_7 & {\rm U}_3(5).2,\, S_8 \\
& {\rm Ru} & {}^2F_4(2)' & {}^2F_4(2)'.2 \\
& {\rm Co}_3 & 3^5.{\rm M}_{11},\, {\rm U}_4(3).2_1,\, {\rm U}_4(3).2_3 & 3^5{:}(2 \times {\rm M}_{11}), \, {\rm U}_4(3).2^2,\, {\rm U}_4(3).2^2 \\
& {\rm Co}_{2} & {\rm U}_5(2).2,\, P_1 = 2^{1+8}.{\rm U}_4(2).2,\, P_3 = 2^9.{\rm L}_3(4).2 & {\rm U}_6(2).2 \\
& & 2^{4+10}.(S_5 \times 3), \, 2^{4+10}.(S_5 \times 2), \, 2^{4+8}.(S_5 \times S_3) & 2^{4+10}.(S_5 \times S_3) \\
& & 2^{10}{:}{\rm M}_{22}, \, 2^{10}.{\rm L}_3(4).2 & 2^{10}{:}{\rm M}_{22}.2 \\
& & 2^{1+8}.2^5.{\rm Sp}_4(2),\, 2^{1+8}.S_8 & 2^{1+8}.{\rm Sp}_6(2) \\
& & {\rm U}_4(3).2^2 & {\rm U}_4(3).D_8 \\
& {\rm Co}_1 & 3.{\rm Suz},\, 2^{11}.{\rm M}_{23} & 3.{\rm Suz}.2, \, 2^{11}.{\rm M}_{24} \\
& & {\rm U}_6(2).3, \, {\rm U}_6(2).2 & {\rm U}_6(2).S_3 \\
& {\rm Fi}_{22} & \O_8^{+}(2).3, \, \O_8^{+}(2).2 & \O_8^{+}(2).S_3 \\
& & {\rm U}_4(3).2 \times 3,\, {\rm U}_4(3).2 \times 2 & {\rm U}_4(3).2 \times S_3 \\
& & 2 \times {\rm U}_5(2),\, 2.2^{1+8}.{\rm U}_4(2) & 2.{\rm U}_6(2),\, 2.2^{1+8}.{\rm U}_4(2).2 \\
& {\rm Fi}_{23} & 2^2.{\rm U}_6(2),\, 2 \times \O_8^{+}(2).S_3 & 2^2.{\rm U}_6(2).2 \\
& & {\rm P\O}_8^{+}(3).2 & {\rm P\O}_8^{+}(3).S_3 \\
& & \mbox{$3^{1+8}.2^{1+6}.3^{1+2}.J$ with $J = D_{12}$, $SD_{16}$, ${\rm SL}_2(3)$} & 3^{1+8}.2^{1+6}.3^{1+2}.2S_4 \\
& & \O_7(3) \times 3, \, \O_7(3) \times 2 & \O_7(3) \times S_3 \\
& {\rm Ly} & 3.{\rm McL} & 3.{\rm McL}.2 \\
& {\rm Fi}_{24}' & {\rm P\O}_{8}^{+}(3).S_3,\, 3 \times {\rm P\O}_{8}^{+}(3).3 & (3 \times {\rm P\O}_{8}^{+}(3).3).2 \\
& \mathbb{B} & 2.{}^2E_6(2) & 2.{}^2E_6(2).2 \\
& & & \\
4 & {\rm Co}_3 & {\rm U}_4(3).2_{3} & {\rm U}_4(3).2^2 \\ 
& {\rm Co}_{2} & 2^{1+8}.{\rm U}_3(3).2 & 2^{1+8}.{\rm Sp}_6(2)\\ 
& {\rm Fi}_{24}' & 3.{\rm P\O}_{8}^{+}(3).2 & (3 \times {\rm P\O}_{8}^{+}(3).3).2 \\ \hline
\end{array}
\]
\caption{The pairs $(G,H)$ in part (ii) of Theorem \ref{t:main}}
\label{tab:main2}
\end{table}
}

{\scriptsize
\begin{table}[h]
\[
\begin{array}{llll} \hline
G & H & H_2 & H_1 \\ \hline
{\rm M}_{24} & 2^6{:}{\rm L}_3(2) & 2^6{:}({\rm L}_3(2) \times 3) & 2^6{:}({\rm L}_3(2) \times S_3) \\
{\rm Co}_{2} & 2^{1+8}.{\rm U}_4(2) & P_1 & {\rm U}_6(2).2 \\
 & 2^{9}.{\rm L}_3(4).2 & 2^{10}.{\rm L}_3(4).2 & 2^{10}{:}{\rm M}_{22}.2 \\
{\rm Co}_1 & {\rm U}_6(2) & {\rm U}_6(2).2 & {\rm U}_6(2).S_3 \\
{\rm Fi}_{22} & {\rm U}_4(3).2 & {\rm U}_4(3).2 \times 2 & {\rm U}_4(3).2 \times S_3 \\
& \O_{8}^{+}(2) & \O_8^{+}(2).2 & \O_8^{+}(2).S_3 \\
{\rm Fi}_{23} & {\rm P\O}_8^{+}(3) & {\rm P\O}_8^{+}(3).2 & {\rm P\O}_8^{+}(3).S_3 \\
& 3^6.{\rm L}_3(4).2 & {\rm P\O}_8^{+}(3).2 & {\rm P\O}_8^{+}(3).S_3 \\
& \O_7(3) \times 2 & {\rm P\O}_8^{+}(3).2 & {\rm P\O}_8^{+}(3).S_3 \\
& \O_7(3) & \O_7(3) \times 2 & \O_7(3) \times S_3 \\
& 3^{1+8}.2^{1+6}.3^{1+2}.D_8 & 3^{1+8}.2^{1+6}.3^{1+2}.SD_{16} &  3^{1+8}.2^{1+6}.3^{1+2}.2S_4 \\ 
{\rm Fi}_{24}' & {\rm P\O}_{8}^{+}(3).3 & {\rm P\O}_{8}^{+}(3).S_3 & (3 \times {\rm P\O}_{8}^{+}(3).3).2 \\
\hline
\end{array}
\]
\caption{The pairs $(G,H)$ in part (iii) of Theorem \ref{t:main} with $d_G(H) = 5$}
\label{tab:main3}
\end{table}
}
 
{\scriptsize
\begin{table}[h]
\[
\begin{array}{clll} \hline
d & G & H &  \\ \hline
9 & {\rm M}_{22}.2 & {\rm L}_3(4).2 & \mbox{maximal} \\
& {\rm HS}.2 & {\rm M}_{22}.2 & \mbox{maximal} \\
& {\rm Fi}_{22}.2 & 2.{\rm U}_6(2).2 & \mbox{maximal} \\
& {\rm Fi}_{24}'.2 & {\rm Fi}_{23} \times 2 & \mbox{maximal} \\
& & & \\
7 & {\rm M}_{12}.2 & {\rm M}_{11} & H < G_0 \\
& {\rm M}_{22}.2 & 2^4{:}S_6 & \mbox{maximal} \\
& & {\rm L}_3(4) & H<G_0 \\
& {\rm HS}.2 & {\rm M}_{22}, \, {\rm U}_3(5).2 & \mbox{maximal} \\
& {\rm He}.2 & {\rm Sp}_4(4){:}4 & \mbox{maximal} \\
& & {\rm Sp}_4(4){:}2 & H<G_0 \\
& {\rm McL}.2 & {\rm U}_4(3).2 & \mbox{maximal} \\
& {\rm Suz}.2 & G_2(4).2 & \mbox{maximal} \\
& & G_2(4) & H<G_0 \\
& {\rm Fi}_{22}.2 & \O_8^{+}(2){:}S_3 \times 2,\, 2^{10}{:}{\rm M}_{22}.2 & \mbox{maximal} \\
& & 2.{\rm U}_6(2),\, \O_7(3) & H<G_0 \\
& {\rm Fi}_{24}'.2 & {\rm Fi}_{23} & H < G_0 \\
& & & \\
5 & {\rm M}_{12}.2 & {\rm L}_2(11).2 \, \mbox{(two)},\, (2^2 \times A_5){:}2,\, 2^{1+4}{:}S_3.2,\, 4^2{:}D_{12}.2 & \mbox{maximal} \\
& & A_6.2^2,\, {\rm L}_2(11),\, {\rm AGL}_2(3), \, 2 \times S_5 & H <G_0 \\
& & S_6,\, {\rm PGL}_2(9) & H < A_6.2^2 < G_0 \\ 
& {\rm M}_{22}.2 & 2^5{:}S_5,\, 2^3{:}{\rm L}_3(2) \times 2, \, A_6.2^2, \, {\rm L}_2(11).2 & \mbox{maximal} \\
& & 2^4{:}A_6,\, A_7,\, 2^4{:}S_5,\, 2^3{:}{\rm L}_3(2) & H<G_0 \\
& & 2^4.S_5,\, S_6, \, A_4^2.D_8,\, 4^2.A_4.2^2,\, 2^3.A_4.2^3 & H < 2^4{:}S_6 \\
& & 2^5{:}A_5,\, 2^4.S_5 \, \mbox{(two)} & H < 2^5{:}S_5 \\
& & S_6 & H < A_6.2^2 \\
& & A_4^2.4 & H < 2^4{:}A_6 < G_0 \\
& {\rm J}_2.2 & {\rm U}_3(3).2,\, 3.A_6.2^2, \, 2^{1+4}.S_5,\, 2^{2+4}{:}(3 \times S_3).2 & \mbox{maximal} \\
& & (A_4 \times A_5).2, \, (A_5 \times D_{10}).2 & \mbox{maximal} \\
& & {\rm U}_3(3),\, 3.A_6.2, \, 2^{1+4}{:}A_5,\, 2^{2+4}{:}(3 \times S_3),\, A_5 \times D_{10} & H<G_0 \\
& & 2^4.A_4.6, \, 4^2.A_4.S_3,\, 4^2.A_4.2^2 & H < 2^{2+4}{:}(3 \times S_3).2 \\
& & 2^2.A_4^2 & H < 2^4.A_4.6 < 2^{2+4}{:}(3 \times S_3).2 \\
& & 3.A_6.2 \, \mbox{(two)} & H < 3.A_6.2^2 \\
& & 3.A_6 & H < 3.A_6.2 < 3.A_6.2^2 \\
& {\rm HS}.2 & {\rm L}_3(4).2^2,\,S_8 \times 2,\, 2^5.S_6,\, 4^3{:}(3 \times {\rm L}_3(2)),\, 2^{1+6}.S_6 & \mbox{maximal} \\ 
& & (2 \times A_6.2^2).2 & \mbox{maximal} \\
& & 2^4.S_6,\, 4^3{:}{\rm L}_3(2),\, 4.2^4.S_5  & H<G_0 \\ 
& & 2^4.S_6 & H < 2^5.S_6 \\
& & S_7 \times 2,\, S_8 \, \mbox{(two)},\, A_8 \times 2 & H < S_8 \times 2 \\
& & {\rm L}_3(4).2 & H < {\rm L}_3(4).2^2 \\
& & S_7 & H < S_7 \times 2 < S_8 \times 2 \\
& & {\rm U}_3(5) & H < {\rm U}_3(5).2 < G_0 \\
& {\rm He}.2 & 2^2.{\rm L}_3(4).D_{12} & \mbox{maximal} \\
& & 2^2.{\rm L}_3(4).S_3,\, 2^6{:}3.S_6 & H<G_0 \\
& & 2^2.{\rm L}_3(4).6,\, 2^2.{\rm L}_3(4).2^2 & H < 2^2.{\rm L}_3(4).D_{12} \\
& & {\rm Sp}_4(4) & H < {\rm Sp}_4(4){:}2 < G_0 \\ 
& {\rm McL}.2 & {\rm U}_3(5).2,\, 3^{1+4}{:}4.S_5,\, 3^4{:}({\rm M}_{10} \times 2),\, 2.S_8 & \mbox{maximal} \\
& & {\rm U}_4(3),\, {\rm M}_{22},\, {\rm U}_3(5),\, 3^{1+4}{:}2.S_5,\, 3^4{:}{\rm M}_{10} & H<G_0 \\
& & 3^{1+4}{:}4.A_5,\, 3^{1+4}.2.S_5  & H < 3^{1+4}{:}4.S_5 \\
& & 3^4.{\rm M}_{10} & H < {\rm U}_4(3){:}2 \\
& & 3^4.(A_6 \times 2) & H < 3^4{:}({\rm M}_{10} \times 2) \\
& & 3^{1+4}{:}2A_5 & H < 3^{1+4}{:}2S_5 < G_0 \\
& & 3^4.A_6 & H < 3^4{:}{\rm M}_{10} < G_0 \\
& {\rm Suz}.2 & 3.{\rm U}_4(3).2^2, \, {\rm U}_5(2).2, \, 2^{1+6}.{\rm U}_4(2).2, \, 3^5{:}({\rm M}_{11} \times 2) & \mbox{maximal} \\
& & {\rm J}_2{:}2 \times 2, \, 2^{4+6}{:}3S_6 & \mbox{maximal} \\ 
& & 3.{\rm U}_4(3).2, \, {\rm U}_5(2), \, 2^{1+6}.{\rm U}_4(2), \, 3^5{:}{\rm M}_{11},\, 2^{4+6}{:}3A_6 & H<G_0 \\
& & 3.{\rm U}_4(3).2 \, \mbox{(two)} & H < 3.{\rm U}_4(3).2^2 \\
& & 3.{\rm U}_4(3) & H < 3.{\rm U}_4(3).2 < G_0 \\ \hline
\end{array}
\]
\caption{The pairs $(G,H)$ in Theorem \ref{t:main2}(iii), Part I}
\label{tab:main41}
\end{table}
}

{\scriptsize
\begin{table}[h]
\[
\begin{array}{clll} \hline
d & G & H &  \\ \hline
5 & {\rm Fi}_{22}.2 & 2^7{:}{\rm Sp}_6(2),\, (2 \times 2^{1+8}{:}{\rm U}_4(2){:}2){:}2,\, S_3 \times {\rm U}_4(3).2^2 & \mbox{maximal}\\
& & 2^{5+8}{:}(S_3 \times S_6),\, 3^5{:}(2 \times {\rm U}_4(2){:}2) & \mbox{maximal} \\
& & 3^{1+6}{:}2^{3+4}{:}3^2{:}2.2 & \mbox{maximal} \\
& & 2^2 \times {\rm Sp}_6(2),\, 2 \times {\rm U}_5(2).2 & H < 2.{\rm U}_6(2).2 \\
& & \O_8^{+}(2).3 \times 2,\, \O_8^{+}(2).2 \times 2,\, \O_8^{+}(2).S_3 \, \mbox{(two)} & H < \O_8^{+}(2).S_3 \times 2 \\
& & 2^{10}.{\rm L}_3(4).2,\, 2^{10}{:}{\rm M}_{22} & H < 2^{10}{:}{\rm M}_{22}{:}2 \\
& & 2^6{:}{\rm Sp}_6(2) & H < 2^7{:}{\rm Sp}_6(2) \\
& & 2.2^8.3^3.A_4.2^4,\, 2^2.2^8.2.{\rm U}_4(2),\, 2^2.2^8.{\rm U}_4(2).2 \, \mbox{(two)} & H < (2 \times 2^{1+8}{:}{\rm U}_4(2){:}2){:}2 \\
& & S_3 \times {\rm U}_4(3).2\, \mbox{(three)},\, 3 \times {\rm U}_4(3).2^2,\, 2 \times {\rm U}_4(3).2^2  & H < S_3 \times {\rm U}_4(3).2^2 \\
& & 3.{\rm U}_4(3).2^2 \, \mbox{(three)}  & H < S_3 \times {\rm U}_4(3).2^2 \\
& & 2^{5+8}{:}(3 \times S_6),\, 2^{5+8}{:}(2 \times S_6),\, 2^{5+8}{:}(S_3 \times A_6) & H < 2^{5+8}{:}(S_3 \times S_6) \\
& & 3^5{:}{\rm U}_4(2).2 & H < 3^5{:}(2 \times {\rm U}_4(2).2) \\
& & {\rm L}_4(3).2 & H < \O_7(3) < G_0 \\
& & 2 \times {\rm U}_5(2),\, {\rm U}_5(2).2 \, \mbox{(two)} & H < 2 \times {\rm U}_5(2).2 < 2.{\rm U}_6(2).2 \\
& & 2.{\rm U}_4(3).2 & H < 2.{\rm U}_6(2) < G_0 \\
& & \O_8^{+}(2).2 \, \mbox{(three)} & H < \O_8^{+}(2).2 \times 2 < \O_8^{+}(2).S_3 \times 2 \\ 
& & \O_8^{+}(2).3 & H < \O_8^{+}(2).S_3 < \O_8^{+}(2).S_3 \times 2 \\
& & 2^{2+8}.{\rm U}_4(2) & H < 2^{2+8}.{\rm U}_4(2).2 < (2 \times 2^{1+8}{:}{\rm U}_4(2){:}2){:}2 \\
& & 2 \times {\rm U}_4(3).2,\, {\rm U}_4(3).2^2 \, \mbox{(four)} & H < 2 \times {\rm U}_4(3).2^2 < S_3 \times {\rm U}_4(3).2^2 \\
& & 3 \times {\rm U}_4(3).2 & H < S_3 \times {\rm U}_4(3).2 < S_3 \times {\rm U}_4(3).2^2 \\ 
& & 2^{5+8}.S_6 & H < 2^{5+8}{:}(3 \times S_6) < 2^{5+8}{:}(S_3 \times S_6) \\
& & \O_8^{+}(2) & H < \O_8^{+}(2) \times 2 < \O_8^{+}(2).2 \times 2 < \O_8^{+}(2).S_3 \times 2 \\
& & {\rm U}_4(3).2_2 & H < 2 \times {\rm U}_4(3).2 < 2 \times {\rm U}_4(3).2^2 < S_3 \times {\rm U}_4(3).2^2 \\
& {\rm O'N}.2 & {\rm L}_3(7).2 & H < G_0 \\
& {\rm HN}.2 & S_{12},\, 4.{\rm HS}.2, \, {\rm U}_3(8).6 & \mbox{maximal} \\
& & A_{12},\, 2.{\rm HS}.2,\, {\rm U}_3(8).3 & H < G_0 \\
& & 4.{\rm HS} & H < 4.{\rm HS}.2 \\
& {\rm Fi}_{24}'.2 & (2 \times 2.{\rm Fi}_{22}).2,\, S_3 \times {\rm P\O}_8^{+}(3).S_3,\, {\rm O}_{10}^{-}(2) & \mbox{maximal} \\
& & 3^7.\O_7(3).2,\, 3^{1+10}.({\rm U}_5(2).2 \times 2) & \mbox{maximal} \\
& & 2.{\rm Fi}_{22}.2 \, \mbox{(two)},\, 2 \times 2.{\rm Fi}_{22} & H < (2 \times 2.{\rm Fi}_{22}).2 \\
& & 3.{\rm P\O}_8^{+}(3).S_3 \, \mbox{(two)}, \, 2.{\rm P\O}_8^{+}(3).S_3 & H < S_3 \times {\rm P\O}_8^{+}(3).S_3 \\
& & S_3.{\rm P\O}_8^{+}(3).3, \, S_3.{\rm P\O}_8^{+}(3).2 & H < S_3 \times {\rm P\O}_8^{+}(3).S_3 \\
& & 3^{1+10}.({\rm U}_5(2) \times 2),\, 3^{1+10}.{\rm U}_5(2).2 & H < 3^{1+10}.({\rm U}_5(2).2 \times 2) \\
& & 2.{\rm Fi}_{22} & H < 2 \times 2.{\rm Fi}_{22} < (2 \times 2.{\rm Fi}_{22}).2 \\ 
& & 3.{\rm P\O}_8^{+}(3).2,\, S_3.{\rm P\O}_8^{+}(3) & H <  S_3.{\rm P\O}_8^{+}(3).2 < S_3 \times {\rm P\O}_8^{+}(3).S_3 \\
& & {\rm P\O}_8^{+}(3).S_3 \, \mbox{(two)},\, 2.{\rm P\O}_8^{+}(3).3 & H < 2.{\rm P\O}_8^{+}(3).S_3 < S_3 \times {\rm P\O}_8^{+}(3).S_3 \\
& & 3.{\rm P\O}_8^{+}(3).3 & H < 3.{\rm P\O}_8^{+}(3).S_3 < S_3 \times {\rm P\O}_8^{+}(3).S_3 \\ 
& & {\rm P\O}_8^{+}(3).3 & H < 2.{\rm P\O}_8^{+}(3).3 < 2.{\rm P\O}_8^{+}(3).S_3 < S_3 \times {\rm P\O}_8^{+}(3).S_3 \\ 
& & & \\
4 & {\rm HS}.2 & {\rm L}_3(4).2 & H<G_0 \\ 
& {\rm Fi}_{22}.2 & 2 \times {\rm U}_4(3).2 \, \mbox{(two)} & H < 2 \times {\rm U}_4(3).2^2 < S_3 \times {\rm U}_4(3).2^2 \\
& & 3 \times {\rm U}_4(3).2 \, \mbox{(two)} & H < 3 \times {\rm U}_4(3).2^2 < S_3 \times {\rm U}_4(3).2^2 \\ 
& {\rm Fi}_{24}'.2 & 3^{1+10}.{\rm U}_5(2).2 & H < 3^{1+10}.({\rm U}_5(2).2 \times 2) \\ 
& & 3.{\rm P\O}_8^{+}(3).2 & H <  S_3.{\rm P\O}_8^{+}(3).2 < S_3 \times {\rm P\O}_8^{+}(3).S_3 \\ \hline
\end{array}
\]
\caption{The pairs $(G,H)$ in Theorem \ref{t:main2}(iii), Part II}
\label{tab:main42}
\end{table}
}

\clearpage

Conflicts of interest: none. 

\vspace{-3mm}

\end{document}